\numberwithin{equation}{section}
\def\3bar{{|\hspace{-.02in}|\hspace{-.02in}|}}
\def\E{{\mathcal{E}}}
\def\T{{\mathcal{T}}}
\def\Q{{\mathcal{Q}}}
\def\pT{{\partial T}}
\def\LL{{\mathcal{L}}}
\def\bu{{\mathbf{u}}}
\def\bv{{\mathbf{v}}}
\def\bn{{\mathbf{n}}}
\def\be{{\mathbf{e}}}
\newtheorem{remark}{Remark}[section]
\newtheorem{algorithm}{Algorithm}[section]
\def\argmin{\operatornamewithlimits{arg\ min}}
\title {A Primal-Dual Weak Galerkin Finite Element Method for
Second Order Elliptic Equations in Non-Divergence Form}
\author{
Chunmei Wang\thanks{School of Mathematics, Georgia Institute of
Technology, Atlanta, Georgia, 30332; Taizhou College, Nanjing Normal
University, Taizhou, China 225300. The research of Chunmei Wang was
partially supported by National Science Foundation award
\#DMS-1522586.} \and Junping Wang\thanks{Division of Mathematical
Sciences, National Science Foundation, Arlington, VA 22230
(jwang@nsf.gov). The research of Junping Wang was supported by the
NSF IR/D program, while working at National Science Foundation.
However, any opinion, finding, and conclusions or recommendations
expressed in this material are those of the author and do not
necessarily reflect the views of the National Science Foundation.}}
\begin{document}

\maketitle

\begin{abstract}
This article proposes a new numerical algorithm for second order
elliptic equations in non-divergence form. The new method is based
on a discrete weak Hessian operator locally constructed by following
the weak Galerkin strategy. The numerical solution is characterized
as a minimization of a non-negative quadratic functional with
constraints that mimic the second order elliptic equation by using
the discrete weak Hessian. The resulting Euler-Lagrange equation
offers a symmetric finite element scheme involving both the primal
and a dual variable known as the Lagrange multiplier, and thus the
name of {\em primal-dual weak Galerkin finite element method}.
Optimal order error estimates are derived for the finite element
approximations in a discrete $H^2$-norm, as well as the usual $H^1$-
and $L^2$-norms. Some numerical results are presented for smooth and
non-smooth coefficients on convex and non-convex domains.
\end{abstract}

\begin{keywords} weak Galerkin, finite element methods, non-divergence form,
weak Hessian operator, discontinuous coefficients, Cord\`es
condition, polyhedral meshes.
\end{keywords}

\begin{AMS}
65N30, 65N12, 35J15, 35D35
\end{AMS}

\pagestyle{myheadings}

\section{Introduction}
This paper is concerned with development of numerical methods for
second order elliptic problems in non-divergence form. For
simplicity, we consider the model problem that seeks an unknown
function $u=u(x)$ satisfying
\begin{equation}\label{1}
\begin{split}
\sum_{i,j=1}^d a_{ij}\partial^2_{ij}u&=f,\quad \text{in}\
\Omega,\\
u &=0,\quad \text{on}\ \partial\Omega,
\end{split}
\end{equation}
where $\Omega$ is an open bounded domain in $\mathbb R^d(d=2,3)$
with Lipschitz continuous boundary $\partial\Omega$,
$\LL:=\sum_{i,j=1}^d a_{ij}\partial_{ij}^2$ is the second order
partial differential operator with coefficients $a_{ij}\in
L^\infty(\Omega)$, and $f\in L^2(\Omega)$ is a given function.

Assume that the coefficient tensor $a(x)=(a_{ij}(x))_{d\times d}$ is
symmetric, uniformly bounded and positive definite. Namely, there
exist positive constants $\alpha$ and $\beta$ such that
\begin{equation}\label{matrix}
\alpha\xi^T\xi\leq \xi^T a(x)\xi\le \, \beta\xi^T\xi \qquad \forall
\xi\in \mathbb{R}^d,\ x\in\Omega.
\end{equation}
If the coefficient tensor $a(x)$ is smooth in the domain $\Omega$,
then the operator $\LL$ can be written in a divergence form:
$$
\LL u = \sum_{i,j=1}^d \partial_j (a_{ij}\partial_i u) -
\sum_{i,j=1}^d (\partial_j a_{ij}) \partial_i u
$$
so that the existing finite element methods (see
\cite{ciarlet-fem,brenner} for example) can be employed for an
accurate approximation of the problem (\ref{1}). In this paper, we
assume that the coefficient tensor $a(x)\in L^\infty(\Omega)$ is
non-smooth so that a variational formulation using integration by
parts is not possible.

Problems in the form of (\ref{1}) arise in many applications from
applied areas such as probability and stochastic processes
\cite{Fleming}. They also appear in the study of fully nonlinear
partial differential equations in conjunction with linearization
techniques such as the Newton's iterative method \cite{brenner-0,
neilan}. In many such applications, the coefficient tensor $a(x)$ is
hardly smooth nor even continuous. For example, the coefficient
$a(x)$ is merely essentially bounded in the application to
Hamilton-Jacobi-Bellman equations \cite{Fleming}. For fully
nonlinear PDEs discretized by discontinuous finite elements, their
linearization involves at most piecewise smooth coefficients.
Therefore, it is important and crucial to develop efficient
numerical methods for Problem (\ref{1}) with rough coefficient
tensor.

Several numerical methods were recently designed and studied for
PDEs in non-divergence form by using finite element approaches based
on ad-hoc variational forms. In \cite{lakkis}, a Galerkin type
method was introduced by using conforming finite elements in the
computing of a {\em finite element Hessian}. This finite element
scheme was further modified and analyzed in \cite{neilan}. In
\cite{feng}, a nonstandard primal finite element method, which uses
finite-dimensional subspaces consisting globally continuous
piecewise polynomial functions, was proposed and analyzed. The key
in \cite{feng} is the use of an interior penalty term, which
penalizes the jump of the flux across the interior element
edges/faces, to augment a nonsymmetric piecewise defined and
PDE-induced bilinear form. In \cite{smears}, an $hp$-version
discontinuous Galerkin finite element method of least-squares type
was designed and analyzed for a class of such problems that satisfy
the Cord\`es condition. In particular, the authors showed that the
method exhibits a convergence rate that is optimal with respect to
the mesh size $h$ and suboptimal with respect to the polynomial
degree $p$ by half an order.

The goal of this paper is to develop a new finite element method for
the model problem (\ref{1}) by using the weak Galerkin strategy
recently introduced in \cite{wy2013, wy2707, mwy3655, wy3655} for
partial differential equations. One of the two basic principles for
weak Galerkin is the use of locally constructed differential
operators, called {\em discrete weak differential operators}, in the
space of discontinuous functions including necessary boundary
information. The discrete weak differential operators form the
critical building block in discretization of the underlying PDEs.
For the model problem (\ref{1}), Hessian is the primary differential
operator which shall be locally reconstructed by using the weak
Galerkin approach. The resulting discrete weak Hessian, denoted by
$\{\partial^2_{ij,d} v\}_{d\times d}$ to be detailed in Sections
\ref{Section:Hessian} and \ref{Section:PD-WGFEM}, is then employed
to approximate (\ref{1}) as follows
\begin{equation}\label{EQ:10-12-2015:01}
\sum_{i,j=1}^d (a_{ij} \partial^2_{ij,d} u_h, w) = (f, w), \qquad
\forall w\in W_{h,k},
\end{equation}
where $W_{h,k}$ is a test space and $u_h$ is sought from a trial
space $V_{h,k}$. The discrete problem (\ref{EQ:10-12-2015:01}),
however, is not well-posed unless an {\em inf-sup} condition of
Babu\u{s}ka \cite{babuska} and Brezzi \cite{b1974} is satisfied. To
overcome this difficulty, this paper proposes a constraint
optimization algorithm which seeks $u_h\in V_{h,k}$ as a
minimization of a prescribed non-negative quadratic functional
$J(v)=\frac12 s(v,v)$ with constraint given by the equation
(\ref{EQ:10-12-2015:01}). The functional $J(v)$ measures the
``continuity" of $v\in V_{h,k}$ in the sense that $v\in V_{h,k}$ is
a classical conforming element if and only if $s(v,v)=0$. The weak
continuity of the finite element approximation $u_h$ as
characterized by the functional $J(v)$ forms the second basic
principle of weak Galerkin. The resulting Euler-Lagrange equation
for the constraint optimization problem gives rise to a symmetric
numerical algorithm involving not only the primal variable $u_h$ but
also a dual variable $\lambda_h$ known as the Lagrange multiplier.
This numerical scheme, called {\em primal-dual weak Gelerkin finite
element method}, is the main contribution of the present paper.

Our theory for the primal-dual weak Gelerkin finite element method
is based on the assumption that the solution of (\ref{1}) is
$H^2$-regular, and that the coefficient tensor $a(x)$ is piecewise
continuous and satisfies the uniform ellipticity condition
(\ref{matrix}). Under those assumptions, an optimal order error
estimate is derived in a discrete $H^2$-norm for the primal variable
and in the $L^2$-norm for the dual variable. We shall also establish
a convergence theory for the primal variable in the $H^1$- and
$L^2$-norms under some smoothness assumptions for the coefficient
tensor. Numerical experiments are presented to illustrate the
accuracy and to confirm the theory developed for the primal-dual
weak Galerkin finite element method.

The paper is organized as follows. In Section
\ref{Section:Preliminaries}, we present some preliminary results on
strong solutions for the model problem (\ref{1}). Section
\ref{Section:Hessian} is devoted to a discussion of weak Hessian and
its discretizations. In Section \ref{Section:PD-WGFEM}, we describe
the primal-dual weak Galerkin finite element method for the model
problem (\ref{1}). Section \ref{Section:ExistenceUniquess} is
devoted to a stability analysis for the new finite element method.
In Section \ref{Section:ErrorEstimates}, we derive an optimal order
error estimate for the numerical method in a discrete $H^2$-norm for
piecewise continuous coefficient tensors. Section
\ref{Section:H1L2Error} continues the error analysis by establishing
some error estimates in the usual $H^1$- and $L^2$-norms for the
primal variable under some smoothness assumptions on the coefficient
tensor. Finally in Section \ref{Section:NE}, we conduct some
numerical experiments for the model problem (\ref{1}) with smooth
and non-smooth coefficients $a(x)$ on convex and non-convex domains.

\section{Preliminaries}\label{Section:Preliminaries}

Let $D\subset \mathbb{R}^d$ be an open bounded domain with Lipschitz
continuous boundary. We use the standard definition for the Sobolev
space $H^s(D)$ and the associated inner product
$(\cdot,\cdot)_{s,D}$, norm $\|\cdot\|_{s,D}$, and seminorm
$|\cdot|_{s,D}$ for any $s\ge 0$ \cite{ciarlet-fem, brenner}. We
also use $\langle\cdot,\cdot\rangle_{\partial D}$ to denote the
usual inner products in $L^2(\partial D)$. For simplicity, we shall
drop the subscript $D$ in the norm and inner product notation when
$D=\Omega$. In addition, $\|\cdot\|_{0,D}$ and
$\|\cdot\|_{0,\partial D}$ are simplified as $\|\cdot\|_{D}$ and
$\|\cdot\|_{\partial D}$, respectively.

The classical Schauder's theory \cite{Gilbarg-Trudinger} states that
if the coefficient matrix $a=a(x)$ is of $C^{0,\alpha}(\Omega)$ and
$\partial \Omega \in C^{2,\alpha}$, then there exists a unique
solution $u\in C^{2,\alpha}(\Omega)$ satisfying the model problem
(\ref{1}). The Calder\'on-Zygmund theory states that if $a=a(x)$ is
of $C^{0}(\bar{\Omega})$ and $\partial \Omega \in C^{1,1}$, then
there exists a unique solution $u\in W^{2,p}(\Omega)$ satisfying
(\ref{1}), see Theorem 9.15 in \cite{Gilbarg-Trudinger} for details.
Furthermore, one has the following a priori estimate
\begin{equation}\label{EQ:W2pregularity}
\|u\|_{2,p} \leq C \|f\|_{0,p}.
\end{equation}
Here $p\in (1,\infty)$ is any given real number.

The solution uniqueness may break down when $d\ge 3$ for
coefficients $a(x)$ that are not continuous. One such example is
given by
\begin{equation}\label{nonuniqueness-example}
a(x) = I_{d\times d} +\frac{(d+\lambda-2) x x^T}{(1-\lambda) |x|^2}.
\end{equation}
With $\Omega=B_1(0), d>2(2-\lambda)$, it can be verified that
$u=|x|^\lambda \in H^2(\Omega)\cap H_0^1(\Omega)$ satisfies the
partial differential equation in (\ref{1}) with $f=0$. For this
reason, in the case $a(x)$ is discontinuous, we assume the following
Cord\`es condition is satisfied: There exists an $\varepsilon\in
(0,1]$ such that
\begin{equation}\label{cordes}
\frac{\sum_{i,j=1}^d a_{ij}^2}{(\sum_{i,j=1}^d a_{ii})^2} \leq
\frac{1}{d-1+\varepsilon}\qquad \mbox{in}\ \Omega.
\end{equation}

\begin{theorem}\label{THM:H2regularity} \cite{smears} Let $\Omega\subset \mathbb R^d$ be a
bounded convex domain, and let the differential operator defined in
(\ref{1}) satisfy $a\in [L^\infty(\Omega)]_{d\times d}$, the
ellipticity condition (\ref{matrix}), and the Cord\`es condition
(\ref{cordes}). Then, for any given $f\in L^2(\Omega)$, there exists
a unique $u\in H^2(\Omega)\cap H_0^1(\Omega)$ that is a strong
solution of (\ref{1}), and this strong solution satisfies
\begin{equation}\label{EQ:H2regularity}
\|u\|_{2} \leq C \|f\|_0,
\end{equation}
where $C$ is a constant depending only on $d$, the diameter of
$\Omega$, $\alpha$, $\beta$, and $\varepsilon$.
\end{theorem}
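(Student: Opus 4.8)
The plan is to follow the renormalization strategy based on the Cord\`es condition; the central idea is that although $\LL$ cannot be put in divergence form, the condition (\ref{cordes}) guarantees that after scaling by a suitable positive scalar $\gamma(x)$ the operator $\gamma\LL$ is a small perturbation of the Laplacian. First I would introduce the scalar
\[
\gamma(x) = \frac{\sum_{i=1}^d a_{ii}(x)}{\sum_{i,j=1}^d a_{ij}^2(x)},
\]
which is the pointwise least-squares optimal factor making $\gamma\, a$ close to the identity. The ellipticity condition (\ref{matrix}) confines the eigenvalues of $a$ to $[\alpha,\beta]$, so $\sum_i a_{ii}\in[d\alpha,d\beta]$ and $\sum_{ij}a_{ij}^2\in[d\alpha^2,d\beta^2]$; hence $\gamma$ is bounded above and below by positive constants depending only on $\alpha,\beta$.

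The key pointwise estimate I would establish is that for every $v\in H^2(\Omega)$,
\[
|\gamma\LL v-\Delta v|\le\sqrt{1-\varepsilon}\,|D^2 v|\qquad\text{a.e. in }\Omega,
\]
where $|D^2v|$ denotes the Frobenius norm of the Hessian. This follows by writing $\gamma\LL v-\Delta v=\sum_{i,j}(\gamma a_{ij}-\delta_{ij})\partial^2_{ij}v$, applying Cauchy--Schwarz over the index pairs, and computing
\[
\sum_{i,j}(\gamma a_{ij}-\delta_{ij})^2=d-\frac{(\sum_i a_{ii})^2}{\sum_{i,j}a_{ij}^2},
\]
which is bounded by $1-\varepsilon$ precisely by (\ref{cordes}).

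Next I would invoke the Miranda--Talenti inequality, valid because $\Omega$ is convex: $\|D^2 v\|_0\le\|\Delta v\|_0$ for all $v\in H^2(\Omega)\cap H_0^1(\Omega)$. With this in hand I would recast (\ref{1}) as a fixed-point problem. Define $T:H^2(\Omega)\cap H_0^1(\Omega)\to H^2(\Omega)\cap H_0^1(\Omega)$ by letting $w=Tv$ be the unique solution (guaranteed by $H^2$-regularity of the Dirichlet problem on convex domains) of
\[
\Delta w=\gamma f+\Delta v-\gamma\LL v\ \text{ in }\Omega,\qquad w=0\ \text{ on }\partial\Omega.
\]
For two inputs $v,v'$, the difference $\delta=Tv-Tv'$ satisfies $\Delta\delta=-(\gamma\LL-\Delta)(v-v')$, whence the pointwise estimate and Miranda--Talenti give $\|D^2\delta\|_0\le\|\Delta\delta\|_0\le\sqrt{1-\varepsilon}\,\|D^2(v-v')\|_0$. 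Since $\|D^2\cdot\|_0$ is an equivalent norm on $H^2(\Omega)\cap H_0^1(\Omega)$ (by the Poincar\'e inequality), $T$ is a contraction with factor $\sqrt{1-\varepsilon}<1$.

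The Banach fixed-point theorem then yields a unique $u\in H^2(\Omega)\cap H_0^1(\Omega)$ with $Tu=u$, which by construction satisfies $\gamma\LL u=\gamma f$, hence $\LL u=f$ since $\gamma>0$; this gives existence and uniqueness of the strong solution. The a priori bound (\ref{EQ:H2regularity}) follows by comparing $u=Tu$ with $T0$: since $\|D^2(T0)\|_0\le\|\gamma f\|_0\le\|\gamma\|_\infty\|f\|_0$, the contraction estimate gives $(1-\sqrt{1-\varepsilon})\|D^2 u\|_0\le\|\gamma\|_\infty\|f\|_0$, and norm equivalence converts this into $\|u\|_2\le C\|f\|_0$ with $C$ depending only on $d$, $\operatorname{diam}\Omega$, $\alpha$, $\beta$, and $\varepsilon$. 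I expect the main obstacle to be the two analytic inputs that do the real work: establishing the sharp Miranda--Talenti inequality on convex domains (the place where convexity is essential, requiring a careful boundary-integral argument or approximation by smooth convex domains) and confirming the $H^2$-regularity of the auxiliary Poisson problem so that $T$ is a well-defined self-map; the Cord\`es algebra itself is elementary once the optimal scaling $\gamma$ is identified.
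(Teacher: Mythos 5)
Your argument is correct, and it is essentially the standard proof of this result: the paper itself does not prove the theorem but quotes it from \cite{smears}, where well-posedness is obtained in exactly this way, via the renormalization $\gamma=\frac{\sum_i a_{ii}}{\sum_{i,j}a_{ij}^2}$, the Cord\`es identity $\sum_{i,j}(\gamma a_{ij}-\delta_{ij})^2\le 1-\varepsilon$, the Miranda--Talenti inequality $\|D^2v\|_0\le\|\Delta v\|_0$ on convex domains, and a contraction (Campanato near-operator) argument relative to the Laplacian. Your accounting of where convexity, the $H^2$ Poisson regularity, and the dependence of the constant on $d$, $\operatorname{diam}\Omega$, $\alpha$, $\beta$, $\varepsilon$ enter is accurate, so there is nothing to correct.
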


For problems in two dimensions, the uniform ellipticity assumption
(\ref{matrix}) implies the validity of the Cord\`es condition
(\ref{cordes}), see \cite{smears} and the references cited therein.
In fact, let $\lambda_{min}(x)$ and $\lambda_{max}(x)$ be the
smallest and the largest eigenvalues of $a(x)$. It is easy to see
that $a_{ii}\leq \lambda_{max}$ for $i=1,2$, and
$a_{11}a_{22}-a_{12}^2 = \lambda_{min}\lambda_{max}$. It follows
that
\begin{equation*}
\begin{split}
(\sum_{i,j=1}^2 a_{ii})^2 \leq & 4 \lambda_{max}^2 =
4\frac{\lambda_{max}}{\lambda_{min}}
\lambda_{min}\lambda_{max}\\
 = & 4\frac{\lambda_{max}}{\lambda_{min}}
\left(a_{11}a_{22}-a_{12}^2 \right)\\
= & 4\kappa(a(x)) \left(a_{11}a_{22}-a_{12}^2 \right),
\end{split}
\end{equation*}
where $\kappa(a(x))$ is the condition number of the matrix $a(x)$.
Thus, we have
\begin{equation}\label{cordes-new}
\begin{split}
\frac{\sum_{i,j=1}^2 a_{ij}^2}{(\sum_{i,j=1}^2 a_{ii})^2} & =
\frac{(\sum_{i,j=1}^2 a_{ii})^2 - 2 (a_{11}a_{22}-a_{12}^2
)}{(\sum_{i,j=1}^2 a_{ii})^2} \\
& \leq 1- \frac{1}{2\kappa(a(x))} = \frac{1}{1+
\frac{1}{2\kappa(a)-1}}
\end{split}
\end{equation}
for all $x\in \Omega$. The last inequality is exactly the Cord\`es
condition (\ref{cordes}) with $\varepsilon =\frac{1}{2\kappa(a)-1}$.
Note that the uniform ellipticity (\ref{matrix}) implies
$\kappa(a)\leq \beta/\alpha$. Hence, the Cord\`es condition
(\ref{cordes}) is satisfied with $\varepsilon
=\frac{\alpha}{2\beta-\alpha}$ under the condition of (\ref{matrix})
for two dimensional problems.

\medskip
Throughout this paper, we assume that the problem (\ref{1}) has a
unique strong solution in $H^2(\Omega)\cap H_0^1(\Omega)$ with the
following a priori estimate
\begin{equation}\label{Assumption:H2regularity}
\|u\|_{2} \leq C \|f\|_0,
\end{equation}
where $C$ is a generic constant which represents different values at
different appearances.

Let $X=H^2(\Omega)\cap H_0^1(\Omega)$ and $Y=L^2(\Omega)$. Introduce
the following bilinear form in $X\times Y$:
\begin{equation}\label{b-form}
b(v, \sigma) := (\LL v, \sigma), \qquad v\in X, \ \sigma\in Y.
\end{equation}
Then, the strong solution of the problem (\ref{1}) satisfies the
following variational equation: Find $u\in X$ such that
\begin{equation}\label{Abstract-Formulation}
b(u,w) = (f, w)\qquad \forall w\in Y.
\end{equation}
It follows from the regularity assumption
\ref{Assumption:H2regularity} that the bilinear form
$b(\cdot,\cdot)$ satisfies the following {\em inf-sup} condition
$$
\sup_{v\in X, v\neq 0} \frac{b(v,\sigma)}{\|v\|_X} \ge \Lambda
\|\sigma\|_Y
$$
for all $\sigma\in Y$, where $\Lambda$ is a generic constant related
to the constant $C$ in the $H^2$ regularity estimate
(\ref{Assumption:H2regularity}). Here $\|\cdot\|_X$ stands for the
$H^2(\Omega)$-norm, and $\|\cdot\|_Y$ is the standard
$L^2(\Omega)$-norm.

\begin{remark}
If the problem (\ref{1}) has the $W^{2,p}$-regularity
(\ref{EQ:W2pregularity}) instead of (\ref{Assumption:H2regularity}),
then the variational equation (\ref{Abstract-Formulation}) still
holds true with $X=W^{2,p}(\Omega)\cap W^{1,p}_0(\Omega)$ and
$Y=L^q(\Omega)$, where $q$ is the conjugate of $p\in (1,\infty)$ so
that $p^{-1}+q^{-1}=1$.
\end{remark}

\section{Weak Hessian and Discrete Weak
Hessian}\label{Section:Hessian} For classical functions, the Hessian
is a square matrix of second order partial derivatives if they all
exist. Note that Hessian is the primary differential operator in the
composition of the second order elliptic problem (\ref{1}) in the
non-divergence form. It is therefore necessary to develop numerical
techniques targeted at the Hessian operator. The objective of this
section is to review the discrete weak Hessian operator introduced
in \cite{ww}.

Let $K$ be a polygonal or polyhedral domain with boundary $\partial
K$. By a weak function on $K$ we mean a triplet
$v=\{v_0,v_b,\bv_g\}$ such that $v_0\in L^2(K)$, $v_b\in
L^{2}(\partial K)$ and $\bv_g\in [L^{2}(\partial K)]^d$. The first
and second components, namely $v_0$ and $v_b$, represent the value
of $v$ in the interior and on the boundary of $K$. The third one,
$\bv_g=(v_{g1},\ldots,v_{gd})\in \mathbb{R}^d$, intends to represent
the gradient vector $\nabla v$ on the boundary of $K$. Note that
$v_b$ and $\bv_g$ may or may not be related to the trace of $v_0$
and $\nabla v_0$ on $\partial K$. In the case of traces are used (if
they exist), the weak function $v$ is uniquely determined by its
first component $v_0$, and it becomes to be a classical function. It
is also possible to take $v_b$ as the trace of $v_0$ and leave
$\bv_g$ completely free or vice versa. Denote by $W(K)$ the space of
all weak functions on $K$
\begin{equation}\label{2.1}
W(K)=\{v=\{v_0,v_b,\bv_g\}: v_0\in L^2(K), v_b\in L^{2}(\partial K),
\bv_g\in [L^{2}(\partial K)]^d\}.
\end{equation}

For any $v\in W(K)$, the generalized weak second order partial
derivative is defined as a bounded linear functional
$\partial^2_{ij,w} v$ on the Sobolev space $H^2(K)$ so that its
action on each $\varphi\in H^2(K)$ is given by
 \begin{equation}\label{2.3}
 \langle \partial^2_{ij,w}v,\varphi\rangle_K:=(v_0,\partial^2_{ji}\varphi)_K-
 \langle v_b n_i,\partial_j\varphi\rangle_{\partial K}+
 \langle v_{gi},\varphi n_j\rangle_{\partial K}.
 \end{equation}
Here, $\bn=(n_1,\cdots,n_d)$ is the unit outward normal direction on
$\partial K$. The weak Hessian of $v\in W(K)$ is defined as
$\nabla^2_{w,K} v  = \left\{\partial_{ij,w}^2 v \right\}_{d\times
d}.$

Let $S_r(K)$ be a finite dimensional linear space consisting of
polynomials on $K$. A discrete analogy of $\partial^2_{ij,w}$,
denoted by $\partial^2_{ij,w,r,K}$, is defined as the unique
polynomial $\partial^2_{ij,w,r,K} v\in S_r(K)$ such that
  \begin{equation}\label{2.4}
 (\partial^2_{ij,w,r,K}v,\varphi)_K=(v_0,\partial^2
 _{ji}\varphi)_K-\langle v_b n_i,\partial_j\varphi\rangle_{\partial K}
 +\langle v_{gi},\varphi n_j\rangle_{\partial K},\quad \forall \varphi \in
 S_r(K).
 \end{equation}
Analogously, for any $v\in W(K)$, its discrete weak Hessian is given
by
$$
\nabla^2_{w,r,K} v = \left\{\partial_{ij,w,r,K}^2 v
\right\}_{d\times d}.
$$

If $v\in W(K)$ has a smooth component $v_0\in H^2(K)$, then the
usual integration by parts can be applied to the first term on the
right-hand side of (\ref{2.4}), yielding
\begin{equation}\label{2.4new}
 (\partial^2_{ij,w,r,K}v,\varphi)_K=(\partial^2
 _{ij}v_0,\varphi)_K-\langle (v_b-v_0) n_i,\partial_j\varphi\rangle_{\partial K}
 +\langle v_{gi}-\partial_i v_0,\varphi n_j\rangle_{\partial K},
 \end{equation}
 for all $\varphi \in S_r(K)$.

\section{Primal-Dual Weak Galerkin}\label{Section:PD-WGFEM}
Let ${\cal T}_h$ be a finite element partition of the domain
$\Omega$ into polygons in 2D or polyhedra in 3D. Denote by
${\mathcal E}_h$ the set of all edges or flat faces in ${\cal T}_h$
and ${\mathcal E}_h^0={\mathcal E}_h \setminus \partial\Omega$ the
set of all interior edges or flat faces. Assume that ${\cal T}_h$
satisfies the shape regularity conditions described as in
\cite{wy3655}. Denote by $h_T$ the diameter of $T\in {\cal T}_h$ and
$h=\max_{T\in {\cal T}_h}h_T$ the meshsize of the partition ${\cal
T}_h$. For any integer $m\ge 0$, denote by $P_m(T)$ the set of all
polynomials of total degree $m$ or less.

For any given integer $k\geq 2$, let $W_k(T)\subset W(T)$ be a
subspace consisting of (piecewise) polynomials in the following form
\begin{equation}\label{EQ:local-weak-fem-space}
W_k(T):=\{v=\{v_0,v_b,\bv_g\}\in P_k(T)\times P_k(e)\times
[P_{k-1}(e)]^d,\ e\in \partial T\cap\E_h\}.
\end{equation}
By patching $W_k(T)$ over all $T\in {\cal T}_h$ through a common
value on the interface $\E_h^0$ for $v_b$ and $\bv_g$, we arrive at
the following weak finite element space
$$
W_{h,k}:=\big\{\{v_0,v_b, \textbf{v}_g\}:\ \{v_0,v_b, \bv_g\}|_T\in
W_k(T), \ T\in {\cal T}_h\big\}.
$$
Denote by $W_{h,k}^0$ the subspace of $W_{h,k}$ with vanishing
boundary value for $v_b$ on $\partial\Omega$:
\begin{equation}\label{EQ:global-weak-fem-space}
W_{h,k}^0=\{\{v_0,v_b, \bv_g\}\in W_{h,k},\ v_b|_e=0, e\subset
\partial\Omega\}.
\end{equation}

Next, let $S_k(T)$ be a linear space of polynomials satisfying
\begin{equation}\label{EQ:LocalS}
P_{k-2}(T) \subseteq S_k(T) \subseteq P_{k-1}(T).
\end{equation}
Correspondingly, we have the following finite element space
\begin{equation}\label{EQ:GlobalS}
S_{h,k}=\Big\{\sigma:\ \sigma|_T\in S_k(T),\ T\in {\cal T}_h\Big\}.
\end{equation}

For simplicity of notation, we denote by $\partial^2_{ij, d}$ the
discrete weak second order partial differential operator defined by
(\ref{2.4}) with $S_r(T)=S_k(T)$ on each element $T$; i.e.,
$$
(\partial^2_{ij, d} v)|_T=\partial^2_{ij,w,r,T}(v|_T), \qquad v\in
W_{h,k}.
$$
On each element $T$, we introduce
\begin{eqnarray}\label{EQ:local-b-form}
b_T(v,\sigma)&=&\sum_{i,j=1}^d (a_{ij}\partial_{ij,d}^2
v,\sigma)_T,\\
s_T(u,v)&=&h_T^{-3}\langle u_0-u_b, v_0-v_b\rangle_\pT +
h_T^{-1}\langle \nabla u_0 -\textbf{u}_g, \nabla v_0
-\textbf{v}_g\rangle_\pT,\label{EQ:local-s-form}
\end{eqnarray}
for $u, v \in W_k(T)$ and $\sigma\in S_k(T)$. Summing up over
$T\in\T_h$ gives the following two bilinear forms
\begin{eqnarray}\label{EQ:global-b-form}
b_h(v, \sigma)&=&\sum_{T\in {\cal T}_h}b_T(v, \sigma),\quad v\in
W_{h,k}, \ \sigma\in S_{h,k},\\
s_h(u,v)&=&\sum_{T\in {\cal T}_h}s_T(u,v), \quad u,v\in
W_{h,k}.\label{EQ:global-s-form}
\end{eqnarray}

Using the bilinear forms defined in (\ref{EQ:global-b-form}) and
(\ref{EQ:global-s-form}), the second order elliptic problem
(\ref{1}) can be discretized as a constrained optimization problem
read as follows: Find $u_h\in W_{h,k}^0$ such that
\begin{equation}\label{WG:ConstrainedOpt}
u_h=\argmin_{v\in W_{h,k}^0, \ b_h(v, \sigma)=(f,\sigma),\ \forall
\sigma\in S_{h,k}} \left(\frac12 s_h(v, v)\right).
\end{equation}
The Euler-Lagrange equation for the constrained minimization problem
(\ref{WG:ConstrainedOpt}) gives rise to the following numerical
scheme.

\begin{algorithm}\emph{(Primal-Dual Weak Galerkin FEM)}
\label{ALG:primal-dual-wg-fem} For a numerical approximation of the
second order elliptic problem (\ref{1}) in the non-divergence form,
find $(u_h;\lambda_h)\in W_{h,k}^0 \times S_{h,k}$ satisfying
 \begin{eqnarray}\label{2}
 s_h(u_h,v)+b_h(v, \lambda_h)&=&0,\qquad\qquad \forall v\in W_{h,k}^0,\\
b_h(u_h, \sigma)&=&(f,\sigma),\qquad \forall\sigma\in
S_{h,k}.\label{32}
\end{eqnarray}
\end{algorithm}

From (\ref{EQ:LocalS}), the finite element space $S_k(T)$ for the
Lagrange multiplier can be chosen as any linear space between
$P_{k-2}(T)$ and $P_{k-1}(T)$. The choice of $S_k(T)=P_{k-2}(T)$ has
the least degrees of freedom, but the resulting numerical solution
may not be as accurate as the case of $S_k(T)=P_{k-1}(T)$. Some
numerical results will be presented in Section \ref{Section:NE} for
a comparison on the approximation accuracies and their order of
convergence.

\section{Stability and Solvability}\label{Section:ExistenceUniquess}
In this section, we first derive an {\it inf-sup} condition for the
bilinear form $b_h(\cdot,\cdot)$, and then show the existence and
uniqueness for the solution of the Algorithm
\ref{ALG:primal-dual-wg-fem} defined by the equations
(\ref{2})-(\ref{32}).

For each element $T$, denote by $Q_0$ the $L^2$ projection onto
$P_k(T)$, $k\geq 2$. For each edge or face $e\subset\partial T$,
denote by $Q_b$ and $\textbf{Q}_g=(Q_{g1}, Q_{g2},\ldots, Q_{gd})$
the $L^2$ projections onto $P_{k}(e)$ and $[P_{k-1}(e)]^d$,
respectively. For any $w\in H^2(\Omega)$, denote by $Q_h w$ the
$L^2$ projection onto the weak finite element space $W_{h,k}$ such
that on each element $T$,
$$
Q_hw=\{Q_0w,Q_bw,\textbf{Q}_g(\nabla w)\}.
$$
Next, denote by ${\cal Q}_h$ the $L^2$ projection onto the space
$S_{h,k}$, which is clearly a composition of local $L^2$ projections
into $S_k(T)$.

\begin{lemma}\label{Lemma5.1} \cite{ww} The projection operators $Q_h$ and
${\cal Q}_h$ satisfy the following commutative property:
\begin{equation}\label{EQ:CommutativeP}
\partial^2_{ij,d}(Q_h w)={\cal Q}_h(\partial^2_{ij} w),\qquad
i,j=1,\ldots,d,
\end{equation}
for all $w\in H^2(T)$.
\end{lemma}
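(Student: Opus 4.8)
The goal is to establish the commutative property
\begin{equation*}
\partial^2_{ij,d}(Q_h w) = \mathcal{Q}_h(\partial^2_{ij} w), \qquad w\in H^2(T).
\end{equation*}
My plan is to work entirely on a single element $T$, since both sides are defined locally, and to verify the identity by testing against an arbitrary $\varphi\in S_k(T)$. Because $\mathcal{Q}_h$ is the $L^2$ projection onto $S_k(T)$ and $\partial^2_{ij,d}(Q_h w)\in S_k(T)$ by construction, it suffices to show that the two quantities have the same $L^2(T)$ inner product against every such $\varphi$. For the right-hand side, the defining property of the projection gives immediately $(\mathcal{Q}_h(\partial^2_{ij}w),\varphi)_T = (\partial^2_{ij}w,\varphi)_T$.

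The main computation is therefore to expand $(\partial^2_{ij,d}(Q_h w),\varphi)_T$ using the definition (\ref{2.4}) of the discrete weak Hessian applied to the weak function $Q_h w = \{Q_0 w, Q_b w, \mathbf{Q}_g(\nabla w)\}$. This yields
\begin{equation*}
(\partial^2_{ij,d}(Q_h w),\varphi)_T = (Q_0 w,\partial^2_{ji}\varphi)_T - \langle Q_b w\, n_i,\partial_j\varphi\rangle_{\partial T} + \langle Q_{gi}(\nabla w),\varphi\, n_j\rangle_{\partial T}.
\end{equation*}
The key step is to remove each of the three projections in turn, using the commuting property of $L^2$ projections against test functions of the appropriate polynomial degree. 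Since $\partial^2_{ji}\varphi\in P_{k-2}(T)\subseteq P_k(T)$, we may replace $Q_0 w$ by $w$ in the volume term; since $\partial_j\varphi|_e\in P_{k-1}(e)\subseteq P_k(e)$, we may replace $Q_b w$ by $w$ on the boundary; and since $\varphi|_e\in P_{k-1}(e)$, we may replace $Q_{gi}(\nabla w)$ by $\partial_i w$ in the last term. After these substitutions the expression coincides with $(w,\partial^2_{ji}\varphi)_T - \langle w\, n_i,\partial_j\varphi\rangle_{\partial T} + \langle \partial_i w,\varphi\, n_j\rangle_{\partial T}$, which is exactly what one obtains by integrating $(\partial^2_{ij}w,\varphi)_T$ by parts twice for the smooth function $w\in H^2(T)$.

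The only subtlety worth checking carefully is the degree bookkeeping that justifies dropping the projections, so I would verify that $S_k(T)\subseteq P_{k-1}(T)$ indeed guarantees $\partial_j\varphi$ has degree at most $k-1$ and $\partial^2_{ji}\varphi$ degree at most $k-2$, matching the polynomial spaces $P_k(e)$, $[P_{k-1}(e)]^d$, and $P_k(T)$ onto which $Q_b$, $\mathbf{Q}_g$, and $Q_0$ project; this is precisely why the definition (\ref{EQ:local-weak-fem-space}) uses these particular degrees. Once the degrees line up, each projection is transparent to the relevant test function and the identity follows. I do not expect any genuine obstacle here: the result is essentially a bookkeeping exercise confirming that the discrete weak Hessian reproduces the exact Hessian on projected smooth functions, and the whole argument reduces to one integration-by-parts identity combined with the defining orthogonality of the $L^2$ projections.
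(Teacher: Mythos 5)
Your proposal is correct and follows essentially the same route as the paper's proof: test against an arbitrary $\varphi\in S_k(T)$, use the definition (\ref{2.4}) of the discrete weak Hessian applied to $Q_h w$, remove the three $L^2$ projections via the degree matching of the test functions, integrate by parts to recover $(\partial^2_{ij}w,\varphi)_T$, and conclude by the defining property of ${\cal Q}_h$. The only difference is that you spell out the degree bookkeeping explicitly, which the paper leaves implicit.
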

\begin{proof} For any $\varphi\in S_k(T)$ and $w\in H^2(T)$, from (\ref{2.4}) and the usual
integration by parts we have
\begin{equation*}
\begin{split}
(\partial^2_{ij,d}(Q_h w),\varphi)_T&=(Q_0
w,\partial^2_{ji}\varphi)_T -\langle Q_b w,\partial_j \varphi
n_i\rangle_{\partial T}+
\langle Q_{gi}(\partial_i w),\varphi n_j\rangle_{\partial T}\\
&=(w,\partial^2_{ji}\varphi)_T-\langle w,\partial_j
 \varphi n_i\rangle_{\partial T}+
 \langle \partial_i w,\varphi n_j\rangle_{\partial T}\\
&=(\partial^2_{ij}w,\varphi)_T\\
&=({\cal Q}_h\partial^2_{ij}w,\varphi)_T.
\end{split}
\end{equation*}
It follows that (\ref{EQ:CommutativeP}) holds true. This completes
the proof of the lemma.
\end{proof}

In the weak finite element space $W_{h,k}$, let us introduce the
following semi-norm
\begin{equation}\label{EQ:triple-bar}
\| v\|^2_{2,h}=\sum_{T\in {\cal T}_h}  \|\sum_{i,j=1}^d {\cal Q}_h
(a_{ij}\partial_{ij}^2 v_0)\|_{T}^2 + s_h(v,v)
\end{equation}
The following Lemma shows that $\|\cdot\|_{2,h}$ is indeed a norm in
the subspace $W_{h,k}^0$ when the meshsize $h$ is sufficiently
small.

\begin{lemma}\label{Lemma:Sept:01} Assume that the coefficient functions $a_{ij}$ are
uniformly piecewise continuous in $\Omega$ with respect to the
finite element partition $\T_h$. There exists a fixed $h_0>0$ such
that if $v=\{v_0,v_b,\bv_g\}\in W_{h,k}^0$ satisfies
$\|v\|_{2,h}=0$, then one must have $v\equiv 0$ when $h\le h_0$.
\end{lemma}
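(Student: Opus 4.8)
The plan is to show that $\|v\|_{2,h}=0$ forces the interior component $v_0$ to be a genuine $H^2(\Omega)\cap H_0^1(\Omega)$ function whose image under the continuous operator $\LL$ is small in $L^2$, and then to invoke the a priori regularity estimate to conclude $v_0\equiv 0$.

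First I would unpack $s_h(v,v)=0$. Since each summand of (\ref{EQ:local-s-form}) is nonnegative, $s_h(v,v)=0$ gives $v_0=v_b$ and $\nabla v_0=\bv_g$ on $\pT$ for every $T\in\T_h$. Because $v_b$ and $\bv_g$ are single-valued on the interior interface $\E_h^0$ and $v_b$ vanishes on $\partial\Omega$, the interior component $v_0$ and its full gradient match across every interior edge/face, and $v_0=0$ on $\partial\Omega$. A piecewise polynomial that is globally $C^1$ has no jumps in its first derivatives across interfaces, so its distributional Hessian has no singular part and $v_0\in H^2(\Omega)\cap H_0^1(\Omega)$.

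Next I would exploit the vanishing of the first term in (\ref{EQ:triple-bar}), namely ${\cal Q}_h(\LL v_0)=0$ on each $T$, where $\LL v_0=\sum_{i,j}a_{ij}\partial^2_{ij}v_0$. The key device is a frozen-coefficient splitting: on each $T$ choose constants $\bar a_{ij}^T$ (say a point value of $a_{ij}$ on $T$) and write
$$\LL v_0=\sum_{i,j}(a_{ij}-\bar a_{ij}^T)\partial^2_{ij}v_0+\sum_{i,j}\bar a_{ij}^T\partial^2_{ij}v_0.$$
Since $v_0|_T\in P_k(T)$, the second sum lies in $P_{k-2}(T)\subseteq S_k(T)$ and is annihilated by $I-{\cal Q}_h$. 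Using ${\cal Q}_h(\LL v_0)=0$, i.e. $\LL v_0=(I-{\cal Q}_h)(\LL v_0)$, the frozen part drops out, leaving only the oscillatory part:
$$\|\LL v_0\|_T=\Big\|(I-{\cal Q}_h)\sum_{i,j}(a_{ij}-\bar a_{ij}^T)\partial^2_{ij}v_0\Big\|_T\le C\delta(h)\,|v_0|_{2,T},$$
where $\delta(h):=\max_T\max_{i,j}\|a_{ij}-\bar a_{ij}^T\|_{L^\infty(T)}$ and I have used that $I-{\cal Q}_h$ is an $L^2$ contraction. The uniform piecewise continuity of $a_{ij}$ relative to $\T_h$ places each element inside a region of continuity, so the elementwise oscillation, hence $\delta(h)$, tends to $0$ as $h\to 0$. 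Summing over $T$ yields $\|\LL v_0\|_0\le C\delta(h)\|v_0\|_2$.

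Finally I would close the argument with the regularity bound. Because $v_0\in H^2(\Omega)\cap H_0^1(\Omega)$ is the unique strong solution of (\ref{1}) with right-hand side $f=\LL v_0$, the estimate (\ref{Assumption:H2regularity}) gives $\|v_0\|_2\le C\|\LL v_0\|_0\le C\delta(h)\|v_0\|_2$. Choosing $h_0>0$ so that $C\delta(h)<1$ for all $h\le h_0$ forces $\|v_0\|_2=0$, hence $v_0\equiv 0$; then $v_b=v_0|_\pT=0$ and $\bv_g=\nabla v_0|_\pT=0$, so $v\equiv 0$. The main obstacle is the middle step: one must arrange the splitting so that after applying $I-{\cal Q}_h$ the only surviving contribution is the non-polynomial, oscillatory part of the coefficient, and then verify that its $L^\infty$ magnitude $\delta(h)$ genuinely vanishes as $h\to 0$ under the stated uniform piecewise continuity. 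This smallness is exactly what is absorbed against the regularity constant to conclude $v_0\equiv 0$.
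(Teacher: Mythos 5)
Your proposal is correct and follows essentially the same route as the paper's own proof: extracting $v_0\in C^1_0(\Omega)$ (hence $H^2\cap H_0^1$) from $s_h(v,v)=0$, writing $\sum_{i,j}a_{ij}\partial^2_{ij}v_0=(I-{\cal Q}_h)\big(\sum_{i,j}(a_{ij}-\bar a_{ij})\partial^2_{ij}v_0\big)$ since the frozen-coefficient part lies in $P_{k-2}(T)\subseteq S_k(T)$, and absorbing the small oscillation via the $H^2$-regularity estimate. The only cosmetic difference is that you freeze $a_{ij}$ at a point value while the paper uses the elementwise average, which changes nothing in the argument.
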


\begin{proof}
Assume that $v=\{v_0,v_b,\bv_g\}\in W_{h,k}^0$ satisfies
$\|v\|_{2,h}=0$. It follows from (\ref{EQ:triple-bar}) and
(\ref{EQ:global-s-form}) that
\begin{equation}\label{EQ:August-30:100}
\sum_{i,j=1}^d {\cal Q}_h (a_{ij}\partial_{ij}^2 v_0) = 0,\
v_0|_\pT=v_b,\ \nabla v_0|_\pT =\textbf{v}_g
\end{equation}
for all $T\in\T_h$. Thus, $v_0 \in C^1_0(\Omega)$ and satisfies
\begin{equation}\label{EQ:August-30:200}
 \sum_{i,j=1}^d {\cal Q}_h (a_{ij}\partial_{ij}^2 v_0) = 0.
\end{equation}
Hence,
\begin{equation}\label{f-f-new}
\begin{split}
  \sum_{i,j=1}^d a_{ij}\partial_{ij}^2 v_0=& \sum_{i,j=1}^d(I-{\cal Q}_h)
\left(a_{ij}\partial_{ij}^2 v_0\right)\\
 =&\sum_{i,j=1}^d(I-{\cal Q}_h)
\left((a_{ij}-\bar{a}_{ij})\partial_{ij}^2 v_0\right)=:F,
\end{split}
\end{equation}
where $\bar{a}_{ij}$ is the average of $a_{ij}$ on $T\in\T_h$. Using
the $H^{2}$-regularity assumption (\ref{Assumption:H2regularity}),
there exists a constant $C$ such that
\begin{equation}\label{EQ:August-30:208}
\|v_0\|_{2}\leq C\|F\|_{0}.
\end{equation}
Note that $a_{ij}$ is uniformly piecewise continuous in $\Omega$
with respect to $\T_h$. Thus, for any $\varepsilon>0$, there exists
a $h_0>0$ such that $\|a_{ij} - \bar{a}_{ij}\|_{L^\infty} \leq
\varepsilon$. Using the stability of the $L^2$ projection ${\cal
Q}_h$, we arrive at
$$
\|F\|_{0} \leq C \varepsilon \|v_0\|_{2}.
$$
Substituting the above into (\ref{EQ:August-30:208}) yields
\begin{equation}\label{EQ:August-30:209}
\|v_0\|_{2}\leq C\varepsilon \|v_0\|_{2}.
\end{equation}
This implies that $v_0=0$ if $\varepsilon$ is so small that
satisfies $C\varepsilon<1$, which can be easily achieved by
adjusting the parameter $h_0$.
\end{proof}

For convenience, in the weak finite element space $W_{h,k}$, we
introduce another semi-norm
\begin{equation}\label{EQ:triple-bar-new}
\3bar v\3bar ^2_{2}=\sum_{T\in {\cal T}_h}  \|\sum_{i,j=1}^d {\cal
Q}_h (a_{ij}\partial_{ij,d}^2 v)\|_{T}^2 + s_h(v,v).
\end{equation}
Observe that the only difference between $\|v\|_{2,h}$ and $\3bar
v\3bar_{2}$ lies in the first term of (\ref{EQ:triple-bar}) and
(\ref{EQ:triple-bar-new}) where the strong second order partial
derivatives are replaced by the discrete weak second order partial
derivatives. The following Lemma shows that they are indeed
equivalent.

\begin{lemma}\label{Lemma:Sept:01-new} Assume that the coefficient functions $a_{ij}$ are
uniformly piecewise continuous in $\Omega$ with respect to the
finite element partition $\T_h$. There exist $\alpha_1>0$ and
$\alpha_2>0$ such that
\begin{equation}\label{EQ:Sept:2015:001}
\alpha_1 \|v\|_{2,h} \leq \3bar v\3bar_{2} \leq \alpha_2 \|v\|_{2,h}
\end{equation}
for all $v\in W_{h,k}$.
\end{lemma}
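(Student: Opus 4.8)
The plan is to exploit formula (\ref{2.4new}), which for a polynomial interior component captures exactly the discrepancy between the discrete weak Hessian $\partial^2_{ij,d}v$ and the strong Hessian $\partial^2_{ij}v_0$ in terms of the boundary mismatches $v_0-v_b$ and $\nabla v_0-\bv_g$ appearing in the stabilizer $s_h(v,v)$. The critical preliminary observation is that since $v_0\in P_k(T)$ with $k\ge 2$, we have $\partial^2_{ij}v_0\in P_{k-2}(T)\subseteq S_k(T)$ by (\ref{EQ:LocalS}); hence $\Q_h(\partial^2_{ij}v_0)=\partial^2_{ij}v_0$, and the element $\delta_{ij}:=\partial^2_{ij,d}v-\partial^2_{ij}v_0$ lies in $S_k(T)$. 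Because both $\|v\|_{2,h}$ and $\3bar v\3bar_2$ carry the identical term $s_h(v,v)$, the equivalence (\ref{EQ:Sept:2015:001}) reduces to comparing the two remaining contributions, and the quantity $\delta_{ij}$ together with the triangle inequality is what links them.

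First I would derive the governing identity for $\delta_{ij}$. Testing (\ref{2.4new}) against $\varphi\in S_k(T)$ and using $\partial^2_{ij}v_0\in S_k(T)$ yields
\begin{equation*}
(\delta_{ij},\varphi)_T = -\langle(v_b-v_0)n_i,\partial_j\varphi\rangle_{\pT} + \langle v_{gi}-\partial_i v_0,\varphi\, n_j\rangle_{\pT},\qquad\forall\varphi\in S_k(T).
\end{equation*}
Choosing $\varphi=\delta_{ij}$ and invoking the trace inequality together with the inverse inequality for polynomials on $T$ (so that $\|\partial_j\delta_{ij}\|_{\pT}\le C h_T^{-3/2}\|\delta_{ij}\|_T$ and $\|\delta_{ij}\|_{\pT}\le C h_T^{-1/2}\|\delta_{ij}\|_T$) will produce, after cancelling one factor of $\|\delta_{ij}\|_T$,
\begin{equation*}
\|\delta_{ij}\|_T^2 \le C\Big(h_T^{-3}\|v_0-v_b\|_{\pT}^2 + h_T^{-1}\|\nabla v_0-\bv_g\|_{\pT}^2\Big) = C\, s_T(v,v).
\end{equation*}
Summing over $T\in\T_h$ and over the finitely many indices $i,j$ then gives $\sum_{T}\sum_{i,j}\|\delta_{ij}\|_T^2 \le C\, s_h(v,v)$.

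With this estimate in hand the equivalence follows from two applications of the triangle inequality. Writing $\partial^2_{ij,d}v=\partial^2_{ij}v_0+\delta_{ij}$, using linearity of $\Q_h$, and bounding $\|\Q_h(a_{ij}\delta_{ij})\|_T\le\|a_{ij}\|_{L^\infty}\|\delta_{ij}\|_T\le\beta\|\delta_{ij}\|_T$ via the uniform bound on $a$ implied by (\ref{matrix}), I would obtain
\begin{equation*}
\Big(\sum_{T}\big\|\sum_{i,j=1}^d\Q_h(a_{ij}\partial^2_{ij,d}v)\big\|_T^2\Big)^{1/2} \le \Big(\sum_{T}\big\|\sum_{i,j=1}^d\Q_h(a_{ij}\partial^2_{ij}v_0)\big\|_T^2\Big)^{1/2} + C\, s_h(v,v)^{1/2}.
\end{equation*}
Squaring and adding $s_h(v,v)$ to both sides yields $\3bar v\3bar_2\le\alpha_2\|v\|_{2,h}$, while reversing the roles through $\partial^2_{ij}v_0=\partial^2_{ij,d}v-\delta_{ij}$ gives the companion bound $\alpha_1\|v\|_{2,h}\le\3bar v\3bar_2$. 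The main technical obstacle is the scaling bookkeeping in the trace--inverse estimate above: one must verify that the powers of $h_T$ generated by the inverse inequalities match \emph{exactly} the weights $h_T^{-3}$ and $h_T^{-1}$ in $s_T(v,v)$, so that the stabilizer is reproduced with no residual factor of $h$. It is worth noting that the piecewise continuity hypothesis on $a_{ij}$ is not needed for this lemma; only the uniform $L^\infty$ bound $\|a_{ij}\|_{L^\infty}\le\beta$ from the ellipticity condition (\ref{matrix}) enters the argument.
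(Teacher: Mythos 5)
Your proof is correct, and it reaches (\ref{EQ:Sept:2015:001}) by a slightly different decomposition than the paper. The paper never forms the difference $\delta_{ij}=\partial^2_{ij,d}v-\partial^2_{ij}v_0$; instead it represents $({\cal Q}_h(a_{ij}\partial^2_{ij,d}v),\phi)_T$ directly in terms of $({\cal Q}_h(a_{ij}\partial^2_{ij}v_0),\phi)_T$ plus the two boundary terms of (\ref{2.4new}), using the test function $\varphi={\cal Q}_h(a_{ij}\phi)$, and then closes the estimate by choosing $\phi={\cal Q}_h(a_{ij}\partial^2_{ij,d}v)$ together with the stability bound $\|\varphi\|_T\le C\|\phi\|_T$; the reverse inequality is obtained by symmetry of the same identity. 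Your route instead isolates the coefficient-free bound $\|\delta_{ij}\|_T^2\le C\,s_T(v,v)$ (a sharpened, difference form of the paper's later Lemma \ref{lemma7.2}) and only then brings in $a_{ij}$ through $\|a_{ij}\|_{L^\infty}$ and a triangle inequality. What this buys you: the coefficient enters only multiplicatively at the end, both directions of (\ref{EQ:Sept:2015:001}) follow from one estimate, and the argument respects the structure $\|\sum_{i,j}{\cal Q}_h(\cdot)\|_T$ with the sum inside the norm, which the paper's per-$(i,j)$ writeup glosses over; the paper's version, on the other hand, keeps everything inside a single duality chain and avoids introducing an auxiliary quantity. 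Note that both proofs hinge on the same hidden ingredient you made explicit, namely $\partial^2_{ij}v_0\in P_{k-2}(T)\subseteq S_k(T)$ from (\ref{EQ:LocalS}) (in the paper it is what justifies $(\partial^2_{ij}v_0,{\cal Q}_h(a_{ij}\phi))_T=({\cal Q}_h(a_{ij}\partial^2_{ij}v_0),\phi)_T$), and your closing remark is accurate: only the uniform $L^\infty$ bound on $a_{ij}$ is actually used here, in your argument as well as in the paper's, the piecewise-continuity hypothesis being carried along from the neighboring lemmas where it is genuinely needed.
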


\begin{proof} Note that, for any $\phi\in S_k(T)$, we have
$$
({\cal Q}_h (a_{ij}\partial_{ij,d}^2 v), \phi)_T =
(\partial_{ij,d}^2 v, {\cal Q}_h (a_{ij} \phi))_T.
$$
With $\varphi= {\cal Q}_h (a_{ij} \phi)$, we have
$$
(\partial^2_{ij}v_0, \varphi )_T = (\partial^2_{ij}v_0,{\cal Q}_h
(a_{ij} \phi))_T =({\cal Q}_h (a_{ij} \partial^2_{ij}v_0), \phi)_T.
$$
Thus, using (\ref{2.4new}) we arrive at
\begin{equation}\label{EQ:800:01}
\begin{split}
({\cal Q}_h (a_{ij}\partial_{ij,d}^2 v), \phi)_T = &
(\partial_{ij,d}^2 v, a_{ij} \phi)_T = (\partial_{ij,d}^2 v, \varphi)_T\\
= & (\partial^2
 _{ij}v_0, \varphi )_T-\langle (v_b-v_0) n_i,\partial_j\varphi\rangle_{\partial
 T}\\
 & +\langle v_{gi}-\partial_i v_0, n_j\varphi\rangle_{\partial
 T}\\
= & ({\cal Q}_h (a_{ij} \partial^2_{ij}v_0), \phi)_T-\langle
(v_b-v_0) n_i,\partial_j\varphi\rangle_{\pT}\\
 & +\langle v_{gi}-\partial_i v_0, n_j\varphi\rangle_{\pT}.
 \end{split}
 \end{equation}
It now follows from the Cauchy-Schwarz and the trace inequality
(\ref{x}) that
\begin{equation}\label{EQ:800:02}
\begin{split}
 \left|({\cal Q}_h (a_{ij}\partial_{ij,d}^2 v), \phi)_T \right|
 \leq\ & \|{\cal Q}_h (a_{ij}\partial_{ij}^2 v_0)\|_T \|\phi\|_T +
 \|v_b-v_0\|_{\pT} \|\partial_j\varphi\|_\pT \\
 & \ + \|v_{gi}-\partial_i v_0\|_{\pT} \|\varphi\|_{\pT}\\
 \leq   &  \|{\cal Q}_h (a_{ij}\partial_{ij}^2 v_0)\|_T \|\phi\|_T+
 C h_T^{-\frac32} \|v_b-v_0\|_{\pT} \|\varphi\|_T \\
 & \ + Ch_T^{-\frac12}\|v_{gi}-\partial_i v_0\|_{\pT} \|\varphi\|_T.
\end{split}
 \end{equation}
It is easy to see that $\|\varphi\|_T \leq C \|\phi\|_T$. Thus, by
choosing $\phi ={\cal Q}_h (a_{ij}\partial_{ij,d}^2 v)$ in
(\ref{EQ:800:02}) we obtain
\begin{equation*}
\|{\cal Q}_h (a_{ij}\partial_{ij,d}^2 v)\|_T^2 \leq C \left(\|{\cal
Q}_h (a_{ij}\partial_{ij}^2 v_0)\|_T^2 +
 h_T^{-3} \|v_b-v_0\|_{\pT}^2+ h_T^{-1}\|v_{gi}-\partial_i
 v_0\|_{\pT}^2\right),
 \end{equation*}
which, after summing over all $T\in\T_h$, gives the upper-bound
estimate of $\3bar v\3bar_2$ in (\ref{EQ:Sept:2015:001}). The
lower-bound estimate of $\3bar v\3bar_2$ can be established in a
similar manner by representing $ ({\cal Q}_h (a_{ij}
\partial^2_{ij}v_0), \phi)_T$ in terms of $({\cal Q}_h
(a_{ij}\partial_{ij,d}^2 v), \phi)_T$ and other two boundary
integrals in (\ref{EQ:800:01}). This completes the proof of the
lemma.
\end{proof}

\begin{lemma}\label{lem3}
(inf-sup condition) Assume that the coefficient matrix
$a=\{a_{ij}\}_{d\times d}$ is uniformly piecewise continuous in
$\Omega$ with respect to the finite element partition $\T_h$. For
any $\sigma\in S_{h,k}$, there exists $v_\sigma\in W_{h,k}^0$
satisfying
\begin{eqnarray} \label{EQ:August30:500}
b_h(v_\sigma,\sigma) & \ge & \frac12 \|\sigma\|_{0}^2,\\
\| v_\sigma\|^2_{2,h} &\leq & C \|\sigma\|^2_{0},
\label{EQ:August30:501}
\end{eqnarray}
provided that the meshsize $h<h_0$ for a sufficiently small, but
fixed parameter $h_0>0$.
\end{lemma}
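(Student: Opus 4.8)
The plan is to construct the test function $v_\sigma$ by lifting $\sigma$ through the continuous problem and then projecting it into the weak finite element space. Given $\sigma\in S_{h,k}\subset L^2(\Omega)$, I would first invoke the $H^2$-regularity assumption (\ref{Assumption:H2regularity}) to solve the auxiliary elliptic problem $\LL\tilde v=\sigma$ in $\Omega$ with $\tilde v=0$ on $\partial\Omega$, obtaining a strong solution $\tilde v\in H^2(\Omega)\cap H_0^1(\Omega)$ with $\|\tilde v\|_2\le C\|\sigma\|_0$. I would then set $v_\sigma=Q_h\tilde v$. Since $\tilde v\in H_0^1(\Omega)$ has vanishing trace on $\partial\Omega$, we get $Q_b\tilde v=0$ there, so that $v_\sigma\in W_{h,k}^0$ as required.

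For the lower bound (\ref{EQ:August30:500}), I would compute $b_h(v_\sigma,\sigma)$ using the commutative property of Lemma \ref{Lemma5.1}, namely $\partial^2_{ij,d}(Q_h\tilde v)={\cal Q}_h(\partial^2_{ij}\tilde v)$ on each $T$, which gives $b_h(v_\sigma,\sigma)=\sum_{T}\sum_{i,j}(a_{ij}{\cal Q}_h(\partial^2_{ij}\tilde v),\sigma)_T$. The target value is $\sum_{T}\sum_{i,j}(a_{ij}\partial^2_{ij}\tilde v,\sigma)_T=(\LL\tilde v,\sigma)=\|\sigma\|_0^2$, so it remains to control the consistency error $\sum_{T}\sum_{i,j}(({\cal Q}_h-I)\partial^2_{ij}\tilde v,\,a_{ij}\sigma)_T$. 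Splitting $a_{ij}=\bar a_{ij}+(a_{ij}-\bar a_{ij})$ with $\bar a_{ij}$ the elementwise average, the constant part satisfies $\bar a_{ij}\sigma\in S_k(T)$ and is annihilated by the $L^2$-orthogonality $({\cal Q}_h-I)\cdot\perp S_k(T)$. The remaining oscillation part is bounded by $\|a_{ij}-\bar a_{ij}\|_{L^\infty(T)}\|\partial^2_{ij}\tilde v\|_T\|\sigma\|_T$; by uniform piecewise continuity I may take $\|a_{ij}-\bar a_{ij}\|_{L^\infty(T)}\le\varepsilon$ once $h<h_0$, and after Cauchy-Schwarz together with $\sum_{i,j}\|\partial^2_{ij}\tilde v\|_0\le C\|\tilde v\|_2\le C\|\sigma\|_0$, the whole consistency error is at most $C\varepsilon\|\sigma\|_0^2$. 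Choosing $h_0$ so small that $C\varepsilon\le\tfrac12$ then yields $b_h(v_\sigma,\sigma)\ge(1-C\varepsilon)\|\sigma\|_0^2\ge\tfrac12\|\sigma\|_0^2$.

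For the stability bound (\ref{EQ:August30:501}) I would pass to the equivalent norm of Lemma \ref{Lemma:Sept:01-new} and estimate $\3bar Q_h\tilde v\3bar_2$. The Hessian term $\sum_T\|\sum_{i,j}{\cal Q}_h(a_{ij}\partial^2_{ij,d}(Q_h\tilde v))\|_T^2$ is again reduced by the commutative property, and then controlled using $|a_{ij}|\le\beta$ and the $L^2$-stability of ${\cal Q}_h$, giving a bound by $C\sum_{i,j}\|\partial^2_{ij}\tilde v\|_0^2\le C\|\tilde v\|_2^2$. The stabilization term $s_h(Q_h\tilde v,Q_h\tilde v)$ follows from standard approximation and trace estimates: using $\|\tilde v-Q_0\tilde v\|_T\le Ch_T^2|\tilde v|_{2,T}$, the $H^2$-stability of $Q_0$, the trace inequality, and the best-approximation property of $Q_b$ and $\mathbf{Q}_g$ (whose target spaces $P_k(e)$ and $[P_{k-1}(e)]^d$ contain the traces of $Q_0\tilde v$ and $\nabla Q_0\tilde v$), each of the two terms in $s_h$ is bounded by $C|\tilde v|_{2,T}^2$. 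Summing over $T$ gives $s_h(Q_h\tilde v,Q_h\tilde v)\le C\|\tilde v\|_2^2$, hence $\|v_\sigma\|_{2,h}^2\le C\3bar Q_h\tilde v\3bar_2^2\le C\|\tilde v\|_2^2\le C\|\sigma\|_0^2$.

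The main obstacle is the treatment of the rough coefficient in the consistency error of the lower bound: the orthogonality of ${\cal Q}_h$ only removes the polynomial factor $\bar a_{ij}\sigma$, so the argument genuinely relies on absorbing the coefficient oscillation $\|a_{ij}-\bar a_{ij}\|_{L^\infty}$ into a small constant. This is precisely where the uniform piecewise continuity of $a$ and the smallness condition $h<h_0$ are indispensable; everything else reduces to routine projection and trace estimation.
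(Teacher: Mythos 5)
Your proposal is correct and follows essentially the same route as the paper: the same lifting $v_\sigma=Q_h w$ through the auxiliary problem $\LL w=\sigma$, the same use of the commutative property of Lemma \ref{Lemma5.1} together with the splitting $a_{ij}=\bar a_{ij}+(a_{ij}-\bar a_{ij})$ and the orthogonality of ${\cal Q}_h$ to absorb the consistency error for small $h$, and the same projection/trace estimates plus the norm equivalence of Lemma \ref{Lemma:Sept:01-new} for the stability bound.
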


\begin{proof} Consider the following second order elliptic problem:
\begin{align}\label{pro1}
\sum_{i,j=1}^d a_{ij}\partial_{ij}^2 w=&\ \sigma,  \qquad \text{in}\ \Omega,\\
w=&\ 0,\qquad \text{on}\  \partial\Omega.\label{pro1-2}
\end{align}
By the $H^{2}$-regularity assumption
(\ref{Assumption:H2regularity}), the problem
(\ref{pro1})-(\ref{pro1-2}) has a unique solution in $H^{2}(\Omega)$
satisfying
\begin{equation}\label{regu}
\|w\|_{2}\leq C\|\sigma\|_{0}.
\end{equation}

We claim that $v_\sigma=Q_hw$ satisfies
(\ref{EQ:August30:500})-(\ref{EQ:August30:501}). In fact, by setting
$v=v_\sigma=Q_hw$ in $b_h(v,\sigma)$, we have from the commutative
property (\ref{EQ:CommutativeP}), the equation (\ref{pro1}), and the
a priori estimate (\ref{regu}) that
\begin{equation}\label{bv}
\begin{split}
b_h(v_\sigma,\sigma)=& \sum_{T\in {\cal T}_h} (\sum_{i,j=1}^d
a_{ij}\partial_{ij,d}^2 Q_hw,\sigma)_T\\
=& \sum_{T\in {\cal T}_h}(\sum_{i,j=1}^d
a_{ij}{\cal Q}_h\partial_{ij}^2 w,\sigma )_T\\
=&\sum_{T\in {\cal
T}_h}(\sum_{i,j=1}^da_{ij}\partial_{ij}^2w,\sigma)_T+ \sum_{T\in
{\cal T}_h}(\sum_{i,j=1}^da_{ij}({\cal Q}_h-I)\partial_{ij}^2
w,\sigma)_T\\
=&\sum_{T\in {\cal T}_h}\|\sigma\|^2_{T}+
\sum_{T\in {\cal T}_h}\sum_{i,j=1}^d(({\cal Q}_h-I)\partial_{ij}^2 w, (a_{ij}-\bar{a}_{ij})\sigma)_T\\
\geq & \ \|\sigma\|^2_{0}-\varepsilon(h) \|w\|_{2}\|\sigma\|_{0}\\
\geq & \ (1-C \varepsilon(h))\|\sigma\|^2_{0},
\end{split}
\end{equation}
where $\varepsilon(h)$ is given by $\|a_{ij}
-\bar{a}_{ij}\|_{L^\infty(\Omega)}$. Since $a_{ij}$ is uniformly
piecewise continuous, there exists a small, but fix $h_0$, such that
$1-C\varepsilon(h) \ge \frac12$ when $h<h_0$. It follows that
$$
b_h(v_\sigma,\sigma) \geq \frac12 \|\sigma\|^2_{0},
$$
which verifies the inequality (\ref{EQ:August30:500}).

Next, for the same $v_\sigma=Q_hw$, from the commutative property
(\ref{EQ:CommutativeP}) and the stability of the $L^2$ projection
${\cal Q}_h$, we have
\begin{equation}\label{EQ:August31:000}
\begin{split}
\sum_{T\in {\cal T}_h} \|{\cal
Q}_h(\sum_{i,j=1}^da_{ij}\partial_{ij,d}^2 v_\sigma)\|_{T}^2\leq& C
\sum_{T\in {\cal T}_h} \sum_{i,j=1}^d\|a_{ij}\partial_{ij,d}^2
Q_hw \|_{T}^2\\
=&C\sum_{T\in {\cal T}_h} \sum_{i,j=1}^d\|a_{ij}{\cal Q}_h \partial^2_{ij} w\|_{T}^2\\
\leq & C \|w\|^2_{2}  \leq C\|\sigma\|^2_{0}.
\end{split}
\end{equation}

For $v=Q_hw$, by the trace inequality (\ref{trace-inequality}) and
(\ref{regu}), the estimate (\ref{3.2}) with $m=1$, we have
\begin{equation}\label{EQ:August31:001}
\begin{split}
\sum_{T\in\T_h}h_T^{-3}\|v_0-v_b\|^2_{\pT}
= &\ \sum_{T\in\T_h} h_T^{-3}\|Q_0w-Q_bw\|^2_{\pT}\\
\leq & \  \sum_{T\in\T_h} h_T^{-3} \|Q_0w-w\|^2_{\pT} \\
\leq &\ C\sum_{T\in {\cal T}_h} h_T^{-4}\left(\|Q_0w-w\|^2_{T}
+h_T^{2}\|\nabla Q_0w-\nabla w\|^2_{T} \right)\\
\leq &\ C \|w\|_{2}^2 \leq  C\|\sigma\|^2_{0}.
 \end{split}
\end{equation}
A similar argument can be applied to yield the following estimate
\begin{equation}\label{EQ:August31:002}
\sum_{T\in {\cal T}_h }h_T^{-1}\|\nabla v_0-\textbf{v}_g\|^2_{\pT}
\leq C\|\sigma\|^2_{0}.
\end{equation}
Now combining (\ref{EQ:August31:000}) with (\ref{EQ:August31:001})
and (\ref{EQ:August31:002}) gives $\3bar v\3bar^2_{2} \leq
C\|\sigma\|^2_{0}$, and hence from (\ref{EQ:Sept:2015:001}) we
obtain
$$
\| v_\sigma\|^2_{2,h} \leq C\|\sigma\|^2_{0},
$$
which, together with (\ref{bv}), completes the proof of the lemma.
\end{proof}

\begin{lemma}\label{lem1} (boundedness) The following
inequalities hold true
\begin{align}\label{b1}
|s_h(u,v)|\leq & \ \| u\|_{2,h} \| v \|_{2,h},\qquad \forall u,v \in W_{h,k}^0,\\
|b_h(v,\sigma)|\leq & \ C \| v\|_{2,h} \|\sigma\|_0, \qquad  \forall
v \in W_{h,k}^0, \ \sigma\in S_{h,k}.\label{b2}
\end{align}
\end{lemma}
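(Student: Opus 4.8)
The plan is to treat the two inequalities separately, both via the Cauchy--Schwarz inequality. For (\ref{b1}), I would first observe that, by its definition (\ref{EQ:global-s-form})--(\ref{EQ:local-s-form}), $s_h(\cdot,\cdot)$ is a symmetric, positive semidefinite bilinear form, being a sum over $T\in\T_h$ of the boundary $L^2(\pT)$ inner products $h_T^{-3}\langle u_0-u_b,v_0-v_b\rangle_\pT$ and $h_T^{-1}\langle\nabla u_0-\bu_g,\nabla v_0-\bv_g\rangle_\pT$ with positive weights. Hence the Cauchy--Schwarz inequality applies to $s_h$ itself, giving $|s_h(u,v)|\le s_h(u,u)^{1/2}s_h(v,v)^{1/2}$. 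Since $s_h(v,v)$ is exactly one of the two nonnegative terms comprising $\|v\|_{2,h}^2$ in (\ref{EQ:triple-bar}), we have $s_h(v,v)^{1/2}\le\|v\|_{2,h}$, and likewise for $u$; combining these yields (\ref{b1}) with constant $1$.

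For (\ref{b2}), the first step is to rewrite $b_h(v,\sigma)$ so that the first term of the norm $\3bar\cdot\3bar_2$ appears explicitly. Starting from $b_h(v,\sigma)=\sum_{T\in\T_h}\sum_{i,j=1}^d(a_{ij}\partial^2_{ij,d}v,\sigma)_T$ and using that $\sigma|_T\in S_k(T)$, the defining orthogonality of the $L^2$ projection ${\cal Q}_h$ onto $S_{h,k}$ lets me insert the projection into the first factor, $(a_{ij}\partial^2_{ij,d}v,\sigma)_T=({\cal Q}_h(a_{ij}\partial^2_{ij,d}v),\sigma)_T$, whence $b_h(v,\sigma)=\sum_{T\in\T_h}\big(\sum_{i,j=1}^d{\cal Q}_h(a_{ij}\partial^2_{ij,d}v),\sigma\big)_T$. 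I would then apply the Cauchy--Schwarz inequality elementwise and the discrete Cauchy--Schwarz over $T\in\T_h$, recognizing the resulting first factor as precisely the first term in the definition (\ref{EQ:triple-bar-new}) of $\3bar v\3bar_2^2$; this gives $|b_h(v,\sigma)|\le\3bar v\3bar_2\,\|\sigma\|_0$. Finally, I would invoke the norm equivalence (\ref{EQ:Sept:2015:001}) of Lemma \ref{Lemma:Sept:01-new} to replace $\3bar v\3bar_2$ by $C\|v\|_{2,h}$, completing (\ref{b2}).

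The only step requiring any care is the insertion of ${\cal Q}_h$ in the second inequality, which rests on the defining property of the $L^2$ projection together with the fact that $\sigma$ already lies in $S_k(T)$, and on recognizing that the resulting quantity matches the first term of $\3bar\cdot\3bar_2$. Everything else is routine Cauchy--Schwarz bookkeeping plus the already-established norm equivalence, so I do not anticipate a genuine obstacle; no regularity or mesh-dependent arguments are needed here.
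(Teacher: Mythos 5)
Your proposal is correct and follows essentially the same route as the paper: Cauchy--Schwarz for $s_h$ (the paper does it termwise rather than abstractly on the form, but this is the same estimate), and for $b_h$ the insertion of ${\cal Q}_h$ via the projection property, Cauchy--Schwarz to obtain the bound by $\3bar v\3bar_2\,\|\sigma\|_0$, and then the equivalence (\ref{EQ:Sept:2015:001}) to conclude.
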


\begin{proof}
To derive (\ref{b1}), we use the Cauchy-Schwarz inequality to obtain
\begin{equation*}
\begin{split}
|s_h(u,v)| =&\Big|\sum_{T\in {\cal T}_h} h_T^{-3} \langle u_0-u_b,
v_0-v_b\rangle_\pT+ h_T^{-1} \langle \nabla u_0 -\bu_g, \nabla v_0 -\bv_g \rangle_\pT \Big|\\
\leq &  \Big(\sum_{T\in {\cal T}_h}h_T^{-3}\|u_0-u_b\|^2_{\partial
T}\Big)^{\frac{1}{2}}\Big(\sum_{T\in {\cal T}_h} h_T^{-3}
\|v_0-v_b\|^2_{\partial
T}\Big)^{\frac{1}{2}}\\
&+  \Big(\sum_{T\in {\cal T}_h} h_T^{-1} \|\nabla u_0 -\textbf{u}_g
\|^2_{\partial T}\Big)^{\frac{1}{2}}\Big(\sum_{T\in {\cal T}_h}
h_T^{-1} \|\nabla v_0 -\textbf{v}_g \|^2_{\partial
T}\Big)^{\frac{1}{2}}\\
 \leq&  \| u\|_{2,h} \| v\|_{2,h}.
\end{split}
\end{equation*}

As to (\ref{b2}), by the definition of ${\cal  Q}_h$ and the
Cauchy-Schwarz inequality, for any $v\in W_{h,k}^0$ and $\sigma\in
S_h$, we have
\begin{equation*}
\begin{split}
 b_h(v,\sigma) & =  \sum_{T\in {\cal T}_h}( \sum_{i,j=1}^d a_{ij}
 \partial_{ij,d}^2 v, \sigma )_T \\
 & =  \sum_{T\in {\cal T}_h}  \sum_{i,j=1}^d ({\cal  Q}_h(a_{ij}
 \partial_{ij,d}^2 v), \sigma )_T \\
&\leq \Big(\sum_{T\in {\cal T}_h} \|  \sum_{i,j=1}^d{\cal Q}_h
(a_{ij} \partial_{ij,d}^2 v)
\|_T^2\Big)^{\frac{1}{2}}\Big(\sum_{T\in {\cal
T}_h}\|\sigma\|_T^2\Big)^{\frac{1}{2}}\\
&\leq \3bar v\3bar_2 \|\sigma\|_0.\\
\end{split}
\end{equation*}
This, along with (\ref{EQ:Sept:2015:001}), completes the proof.
\end{proof}

Introduce the following subspace of $W_{h,k}^0$:
$$
Z_h=\{v\in W_{h,k}^0: \ b_h(v,\sigma)=0, \ \forall \sigma\in
S_{h,k}\}.
$$

\begin{lemma}\label{lem2} (coercivity) There exists a constant $\alpha>0$ such that
\begin{equation}\label{EQ:coercivity}
s_h(v,v)\geq \alpha \| v\|_{2,h}^2, \qquad \forall v\in Z_h.
\end{equation}
\end{lemma}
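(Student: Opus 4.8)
The plan is to exploit the defining constraint of $Z_h$ to annihilate the first term of the triple-bar seminorm $\3bar\cdot\3bar_2$, and then to transfer the resulting identity to $\|\cdot\|_{2,h}$ through the norm equivalence already proved in Lemma \ref{Lemma:Sept:01-new}. First I would fix $v\in Z_h$ and rewrite the constraint $b_h(v,\sigma)=0$ in terms of the projection ${\cal Q}_h$. Since $\sigma|_T\in S_k(T)$ and ${\cal Q}_h$ is the $L^2$ projection onto $S_{h,k}$, linearity of ${\cal Q}_h$ gives
$$
b_h(v,\sigma)=\sum_{T\in\T_h}\Big(\sum_{i,j=1}^d a_{ij}\partial^2_{ij,d}v,\sigma\Big)_T
=\sum_{T\in\T_h}\Big(\sum_{i,j=1}^d {\cal Q}_h(a_{ij}\partial^2_{ij,d}v),\sigma\Big)_T,
$$
so that $v\in Z_h$ is equivalent to saying that $\sum_{i,j}{\cal Q}_h(a_{ij}\partial^2_{ij,d}v)$ is $L^2$-orthogonal to the entire space $S_{h,k}$.

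The key step is the observation that on each element $T$ the quantity $\sum_{i,j}{\cal Q}_h(a_{ij}\partial^2_{ij,d}v)$ itself lies in $S_k(T)$, since each ${\cal Q}_h(a_{ij}\partial^2_{ij,d}v)|_T\in S_k(T)$ and $S_k(T)$ is a linear space. Hence it is an admissible test function $\sigma\in S_{h,k}$, and substituting this particular choice into the orthogonality relation forces
$$
\sum_{T\in\T_h}\Big\|\sum_{i,j=1}^d {\cal Q}_h(a_{ij}\partial^2_{ij,d}v)\Big\|_T^2=0.
$$
This is precisely the vanishing of the first term of $\3bar v\3bar_2^2$ in (\ref{EQ:triple-bar-new}), so that $\3bar v\3bar_2^2=s_h(v,v)$ for every $v\in Z_h$.

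Finally I would invoke the equivalence (\ref{EQ:Sept:2015:001}) of Lemma \ref{Lemma:Sept:01-new}, namely the lower bound $\alpha_1\|v\|_{2,h}\le\3bar v\3bar_2$, which applies to all $v\in W_{h,k}\supset Z_h$. Squaring and combining with the identity just obtained yields
$$
\alpha_1^2\|v\|_{2,h}^2\le\3bar v\3bar_2^2=s_h(v,v),
$$
which is exactly (\ref{EQ:coercivity}) with the constant $\alpha=\alpha_1^2>0$. I do not expect a genuine obstacle in this argument: it is essentially a test-function trick followed by the application of an already-established norm equivalence. The only two points demanding a little care are verifying that $\sum_{i,j}{\cal Q}_h(a_{ij}\partial^2_{ij,d}v)$ is a legitimate element of $S_{h,k}$ (guaranteed by $P_{k-2}(T)\subseteq S_k(T)\subseteq P_{k-1}(T)$ being a linear space) and confirming that the equivalence of Lemma \ref{Lemma:Sept:01-new} is valid on the whole of $W_{h,k}$ rather than only on $W_{h,k}^0$, which it is.
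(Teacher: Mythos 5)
Your proof is correct and follows essentially the same route as the paper's: you use the $Z_h$ constraint with the test function $\sigma|_T=\sum_{i,j}{\cal Q}_h(a_{ij}\partial^2_{ij,d}v)|_T\in S_k(T)$ to conclude that the first term of $\3bar v\3bar_2^2$ vanishes, so $\3bar v\3bar_2^2=s_h(v,v)$, and then apply the equivalence (\ref{EQ:Sept:2015:001}) of Lemma \ref{Lemma:Sept:01-new}. Your version merely makes explicit the admissibility of that test function, which the paper leaves implicit.
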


\begin{proof}
Given any $v\in Z_h$, we have $b_h(v,\sigma)=0$ for all $\sigma\in
S_{h,k}$. Using (\ref{EQ:global-b-form}) and (\ref{EQ:local-b-form})
we obtain
\begin{eqnarray*}
0&=&b_h(v,\sigma)\\
&=&\sum_{T\in {\cal T}_h}(\sum_{i,j=1}^d a_{ij}\partial_{ij,d}^2 v ,\sigma)_T\\
&=&\sum_{T\in {\cal T}_h}(\sum_{i,j=1}^d{\cal Q}_h
(a_{ij}\partial_{ij,d}^2 v),\sigma)_T
\end{eqnarray*}
for all $\sigma\in S_{h,k}$. Thus, on each element $T\in\T_h$ we
have
$$
\sum_{i,j=1}^d{\cal  Q}_h (a_{ij}\partial_{ij,d}^2 v) = 0.
$$
It follows that $\3bar v\3bar_2^2 = s_h(v,v),$ which, together with
(\ref{EQ:Sept:2015:001}), implies the desired coercivity
(\ref{EQ:coercivity}) for some $\alpha>0$.
\end{proof}

Using the abstract theory for saddle-point problems developed by
Babu\u{s}ka \cite{babuska} and Brezzi \cite{b1974}, we arrive at the
following result.

\begin{theorem}\label{thmunique1} Assume that the coefficient functions $a_{ij}$
are uniformly piecewise continuous in $\Omega$ with respect to the
finite element partition $\T_h$. The primal-dual weak Galerkin
finite element scheme (\ref{2})-(\ref{32}) has a unique solution
$(u_h; \lambda_h)\in W_{h,k}^0\times S_{h,k}$, provided that the
meshsize $h<h_0$ holds true for a sufficiently small, but fixed
parameter value $h_0>0$. Moreover, there exists a constant $C$ such
that the solution $u_h$ and $\lambda_h$ satisfies
$$
\| u_h\|_{2,h} + \|\lambda_h\|_0 \leq C \|f\|_0.
$$
\end{theorem}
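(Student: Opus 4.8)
The plan is to apply the classical Babuška--Brezzi saddle-point theory, whose hypotheses have been carefully prepared by the preceding lemmas. The finite element scheme \eqref{2}--\eqref{32} is a saddle-point system governed by two bilinear forms: $s_h(\cdot,\cdot)$ on $W_{h,k}^0\times W_{h,k}^0$ and $b_h(\cdot,\cdot)$ on $W_{h,k}^0\times S_{h,k}$. To invoke the abstract theory I would verify, in order: (i) boundedness of both forms; (ii) the \emph{inf-sup} condition for $b_h(\cdot,\cdot)$; and (iii) coercivity of $s_h(\cdot,\cdot)$ on the kernel $Z_h$. Each of these is exactly one of the lemmas already established, so the verification is essentially a matter of assembling them correctly.

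First I would note that Lemma \ref{lem1} furnishes the boundedness \eqref{b1}--\eqref{b2} of $s_h$ and $b_h$ in the norms $\|\cdot\|_{2,h}$ and $\|\cdot\|_0$; Lemma \ref{lem3} supplies the \emph{inf-sup} condition, namely that for each $\sigma\in S_{h,k}$ one may construct $v_\sigma=Q_hw\in W_{h,k}^0$ with $b_h(v_\sigma,\sigma)\ge\frac12\|\sigma\|_0^2$ and $\|v_\sigma\|_{2,h}^2\le C\|\sigma\|_0^2$, which together yield a lower bound of the form $\sup_{v\ne0} b_h(v,\sigma)/\|v\|_{2,h}\ge c\|\sigma\|_0$; and Lemma \ref{lem2} provides the coercivity $s_h(v,v)\ge\alpha\|v\|_{2,h}^2$ on $Z_h$. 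These are precisely the three ingredients of the Babuška--Brezzi framework, so existence and uniqueness of $(u_h;\lambda_h)$ follow directly, \emph{provided} $h<h_0$, since both Lemma \ref{lem3} and the underlying norm property (Lemma \ref{Lemma:Sept:01}) require the mesh to be fine enough that the piecewise-continuous coefficients are well approximated by their local averages.

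To derive the a priori bound $\|u_h\|_{2,h}+\|\lambda_h\|_0\le C\|f\|_0$, I would argue in two stages. For the primal variable, since $u_h$ satisfies the constraint \eqref{32}, it need not lie in $Z_h$; I would therefore split $u_h=u_h^0+u_f$, where $u_f\in W_{h,k}^0$ is an \emph{inf-sup}--controlled lifting of the data with $b_h(u_f,\sigma)=(f,\sigma)$ for all $\sigma$ and $\|u_f\|_{2,h}\le C\|f\|_0$ (obtainable from Lemma \ref{lem3}), while $u_h^0:=u_h-u_f\in Z_h$. Testing \eqref{2} with $v=u_h^0$ and using $b_h(u_h^0,\lambda_h)=0$ together with coercivity (Lemma \ref{lem2}) and boundedness (Lemma \ref{lem1}) would bound $\|u_h^0\|_{2,h}$, and hence $\|u_h\|_{2,h}$, by $C\|f\|_0$. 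For the multiplier I would use the \emph{inf-sup} condition in reverse: from \eqref{2}, $b_h(v,\lambda_h)=-s_h(u_h,v)$ for all $v\in W_{h,k}^0$, so selecting the test function $v_{\lambda_h}$ from Lemma \ref{lem3} gives $\tfrac12\|\lambda_h\|_0^2\le b_h(v_{\lambda_h},\lambda_h)=-s_h(u_h,v_{\lambda_h})\le\|u_h\|_{2,h}\|v_{\lambda_h}\|_{2,h}\le C\|u_h\|_{2,h}\|\lambda_h\|_0$, whence $\|\lambda_h\|_0\le C\|u_h\|_{2,h}\le C\|f\|_0$.

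The main obstacle is not in the abstract mechanism, which is standard, but in keeping the norms and spaces consistent: the \emph{inf-sup} lower bound and the coercivity are phrased in the semi-norms $\|\cdot\|_{2,h}$ and $\3bar\cdot\3bar_2$, and one must invoke Lemma \ref{Lemma:Sept:01-new} to pass between them and Lemma \ref{Lemma:Sept:01} to know that $\|\cdot\|_{2,h}$ is a genuine \emph{norm} on $W_{h,k}^0$ (so that the unique solvability statement is meaningful). All three of these facts carry the smallness requirement $h<h_0$, so the only real subtlety is to confirm that a single $h_0$ can be chosen to make the norm property, the \emph{inf-sup} constant, and the coercivity constant simultaneously valid; taking the minimum of the finitely many thresholds from the cited lemmas resolves this.
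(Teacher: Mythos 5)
Your proposal is correct and follows essentially the same route as the paper: the paper proves this theorem simply by citing the abstract Babu\u{s}ka--Brezzi saddle-point theory together with Lemma \ref{lem1} (boundedness), Lemma \ref{lem3} (inf-sup), and Lemma \ref{lem2} (kernel coercivity), which are exactly the ingredients you assemble. Your additional unpacking of the stability bound (the lifting $u_f$, testing with the kernel part, and using the inf-sup construction for $\lambda_h$) is just the standard content of that abstract theory made explicit, and your remarks on the norm property (Lemma \ref{Lemma:Sept:01}) and a common $h_0$ are consistent with the paper.
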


\section{Error Estimates}\label{Section:ErrorEstimates}
Let $(u_h;\lambda_h) \in W_{h,k}^0\times S_{h,k}$ be the approximate solution
of the problem (\ref{1}) arising from the primal-dual weak Galerkin
finite element scheme (\ref{2})-(\ref{32}). Note that $\lambda=0$ is
the solution of the trivial dual problem of $b(v,\lambda)=0$ for all
$v\in H^2(\Omega)\cap H_0^1(\Omega)$. Define the error functions by
\begin{equation}\label{error}
e_h=u_h-Q_hu,\quad \gamma_h =\lambda_h-\Q_h\lambda,
\end{equation}
where $Q_h$ and $\Q_h$ are the corresponding $L^2$ projection
operators.

\begin{lemma}\label{errorequa}
The error functions $e_h$ and $\gamma_h$ given by (\ref{error})
satisfy the following equations
\begin{eqnarray}\label{sehv}
s_h(e_h, v)+b_h(v, \gamma_h) & = &
  -s_h(Q_h u,v),\qquad \forall v\in W_{h,k}^0,\\
b_h(e_h,\sigma) & = & \ell_u(\sigma), \qquad\;\; \qquad \forall
\sigma\in S_{h,k},\label{sehv2}
\end{eqnarray}
where
\begin{equation}\label{EQ:ell-u}
\ell_u(\sigma) = \sum_{T\in {\cal T}_h} \sum_{i,j=1}^d((I-{\cal
Q}_h)\partial_{ij}^2u, a_{ij}\sigma)_T.
\end{equation}
\end{lemma}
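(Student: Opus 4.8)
The plan is to obtain both error equations by substituting the decompositions $u_h=e_h+Q_hu$ and $\lambda_h=\gamma_h+\Q_h\lambda$ from (\ref{error}) directly into the two equations (\ref{2})--(\ref{32}) of Algorithm \ref{ALG:primal-dual-wg-fem}, and then identifying the resulting right-hand sides. A useful preliminary observation is that the continuous dual solution is $\lambda=0$, so that $\Q_h\lambda=0$ and hence $\gamma_h=\lambda_h$; this keeps the multiplier bookkeeping transparent and makes the substitution into (\ref{2}) immediate.

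For the first equation (\ref{sehv}) I would start from (\ref{2}), namely $s_h(u_h,v)+b_h(v,\lambda_h)=0$ for all $v\in W_{h,k}^0$. Writing $u_h=e_h+Q_hu$ and $\lambda_h=\gamma_h$ and using the bilinearity of $s_h(\cdot,\cdot)$ in its first argument gives
$$
s_h(e_h,v)+s_h(Q_hu,v)+b_h(v,\gamma_h)=0,
$$
and moving the projection term to the right-hand side yields (\ref{sehv}) at once. This step requires nothing beyond linearity of the forms.

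The second equation (\ref{sehv2}) is where the only real content lies. Starting from (\ref{32}), $b_h(u_h,\sigma)=(f,\sigma)$, and substituting $u_h=e_h+Q_hu$ gives $b_h(e_h,\sigma)=(f,\sigma)-b_h(Q_hu,\sigma)$. To evaluate $b_h(Q_hu,\sigma)$ I would invoke the commutative property (\ref{EQ:CommutativeP}) of Lemma \ref{Lemma5.1}, which applies since $u|_T\in H^2(T)$, to replace $\partial^2_{ij,d}(Q_hu)$ by $\Q_h\partial^2_{ij}u$; thus
$$
b_h(Q_hu,\sigma)=\sum_{T\in\T_h}\sum_{i,j=1}^d(a_{ij}\Q_h\partial^2_{ij}u,\sigma)_T.
$$
On the other hand, the strong form (\ref{1}) gives $(f,\sigma)=\sum_{T\in\T_h}\sum_{i,j=1}^d(a_{ij}\partial^2_{ij}u,\sigma)_T$. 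Subtracting the two and recognizing the pointwise identity $(a_{ij}g,\sigma)_T=(g,a_{ij}\sigma)_T$ for the scalar coefficient $a_{ij}$ converts the difference into $\sum_{T\in\T_h}\sum_{i,j=1}^d((I-\Q_h)\partial^2_{ij}u,a_{ij}\sigma)_T$, which is exactly $\ell_u(\sigma)$ as defined in (\ref{EQ:ell-u}).

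The derivation is essentially bookkeeping; the main point of care is the correct use of the commutative property to turn the discrete weak Hessian of $Q_hu$ into the $L^2$-projected classical Hessian, after which the interchange of $a_{ij}$ between the two slots of the inner product together with the substitution of the PDE produces the stated consistency functional. No inf-sup or regularity estimate is needed at this stage.
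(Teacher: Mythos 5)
Your proposal is correct and follows essentially the same route as the paper: the first error equation by linearity of $s_h$ together with the observation that $\lambda=0$ gives $\gamma_h=\lambda_h$, and the second by evaluating $b_h(Q_hu,\sigma)$ via the commutative property (\ref{EQ:CommutativeP}), inserting the strong equation (\ref{1}) for $(f,\sigma)$, and shifting $a_{ij}$ across the inner product to recover $\ell_u(\sigma)$. No gaps.
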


\begin{proof}
First, by subtracting $s_h(Q_h u,v)$ from both sides of (\ref{2}) we
obtain
\begin{align*}
s_h(u_h-Q_h u,v)+b_h(v, \lambda_h) = -s_h(Q_h u,v),\qquad\forall
v\in W_{h,k}^0.
\end{align*}
It follows from $\lambda=0$ that $\gamma_h=\lambda_h$. Thus, the
above equation can be rewritten as
\begin{equation}\label{EQ:Error-EQ-01}
s_h(e_h,v)+b_h(v, \gamma_h) = -s_h(Q_h u,v),\qquad\forall v\in
W_{h,k}^0,
\end{equation}
which is the first error equation (\ref{sehv}).

To derive (\ref{sehv2}), we use (\ref{1}) and
(\ref{EQ:CommutativeP}) in Lemma \ref{Lemma5.1} to obtain
\begin{equation*}
\begin{split}
b_h(Q_h u, \sigma) = &\sum_{T\in {\cal T}_h} (\sum_{i,j=1}^d
a_{ij}\partial_{ij,d}^2
Q_hu,\sigma)_T\\
=&\sum_{T\in {\cal T}_h} (\sum_{i,j=1}^d a_{ij}{\cal Q}_h\partial_{ij }^2
u,\sigma)_T\\
=&\sum_{T\in {\cal T}_h}(\sum_{i,j=1}^d a_{ij} \partial_{ij }^2 u,\sigma)_T+
\sum_{T\in {\cal T}_h} (\sum_{i,j=1}^da_{ij}({\cal Q}_h-I)\partial_{ij}^2 u, \sigma)_T\\
=& ( f,\sigma) +\sum_{T\in {\cal T}_h} \sum_{i,j=1}^d(({\cal
Q}_h-I)\partial_{ij}^2u, a_{ij}\sigma)_T,
\end{split}
\end{equation*}
for all $\sigma\in S_{h,k}$. Now subtracting the above equation from
(\ref{32}) yields the desired equation (\ref{sehv2}). This completes
the proof of the lemma.
\end{proof}

The equations (\ref{sehv}) and (\ref{sehv2}) are called {\em error
equations} for the primal-dual WG finite element scheme
(\ref{2})-(\ref{32}). This is a saddle point system for which the
Brezzi's Theorem \cite{b1974} can be applied for a stability
analysis.

Recall that $\T_h$ is a shape-regular finite element partition of
the domain $\Omega$. For any $T\in\T_h$ and $\varphi\in H^1(T)$, the
following trace inequality holds true \cite{wy3655}:
\begin{equation}\label{trace-inequality}
\|\varphi\|_{\pT}^2 \leq C
(h_T^{-1}\|\varphi\|_{T}^2+h_T\|\nabla\varphi\|_{T}^2).
\end{equation}
If $\varphi$ is a polynomial on the element $T\in \T_h$, then from
the inverse inequality (see also \cite{wy3655}) we have
\begin{equation}\label{x}
\|\varphi\|_{\pT}^2 \leq C h_T^{-1}\|\varphi\|_{T}^2.
\end{equation}

The following estimates for the $L^2$-projections are extremely
useful in the forthcoming error analysis.

\begin{lemma}\label{Lemma5.2}\cite{wy3655}  Let ${\cal T}_h$ be a
finite element partition of $\Omega$ satisfying the shape regularity
assumptions given in \cite{wy3655}. Then, for any $0\leq s\leq 2$
and $1\leq m\leq k$, one has
\begin{eqnarray}\label{3.2}
\sum_{T\in {\cal T}_h}h_T^{2s}\|u-Q_0u\|^2_{s,T} &\leq &
Ch^{2(m+1)}\|u\|_{m+1}^2,\\
\label{3.3-3} \sum_{T\in {\cal T}_h}\sum_{i,j=1}^dh_T^{2s}\| u-{\cal
Q}_h u\|^2_{s,T} &\leq& Ch^{2(m-1)}\|u\|_{m-1}^2,\\
\label{3.3} \sum_{T\in {\cal
T}_h}\sum_{i,j=1}^dh_T^{2s}\|\partial^2_{ij}u-{\cal
Q}_h\partial^2_{ij}u\|^2_{s,T} &\leq& Ch^{2(m-1)}\|u\|_{m+1}^2.
\end{eqnarray}
\end{lemma}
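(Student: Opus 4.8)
The plan is to reduce all three estimates to a single local approximation property of the $L^2$-projection onto a polynomial space, and then to obtain the global bounds by scaling with $h_T$ and summing over $T\in{\cal T}_h$. The workhorse is the standard estimate: if $\Pi_\ell$ denotes the local $L^2$-projection onto $P_\ell(T)$, then for any $g\in H^r(T)$ with $0\le s\le r\le \ell+1$,
$$
\|g-\Pi_\ell g\|_{s,T}\le C h_T^{r-s}|g|_{r,T},
$$
with $C$ depending only on $\ell$ and the shape-regularity of ${\cal T}_h$. This is proved in the usual way: since $\Pi_\ell$ reproduces all of $P_\ell$, one invokes the Bramble--Hilbert lemma to control the projection error by the $r$-th seminorm of $g$, and then uses shape-regularity to recover the correct powers of $h_T$.

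For the first estimate, I would apply the local property with $\Pi_\ell=Q_0$ (so $\ell=k$) and $g=u$, $r=m+1$; since $1\le m\le k$ guarantees $r=m+1\le k+1=\ell+1$, we obtain $\|u-Q_0u\|_{s,T}\le C h_T^{m+1-s}|u|_{m+1,T}$. Multiplying by $h_T^{2s}$, squaring, summing over $T$, and using $h_T\le h$ together with $\sum_T|u|^2_{m+1,T}\le\|u\|_{m+1}^2$ yields (\ref{3.2}).

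For the second estimate, the only change is that ${\cal Q}_h$ projects onto $S_k(T)$, which by (\ref{EQ:LocalS}) is only guaranteed to contain $P_{k-2}(T)$; hence I take $\ell=k-2$ and $r=m-1$. The admissibility condition $r\le\ell+1$ becomes $m-1\le k-1$, i.e. $m\le k$, which is exactly the stated range. This gives $\|u-{\cal Q}_h u\|_{s,T}\le C h_T^{m-1-s}|u|_{m-1,T}$; the sum over $i,j$ in (\ref{3.3-3}) merely contributes a factor $d^2$ into $C$, and summing as before produces (\ref{3.3-3}). The third estimate (\ref{3.3}) then follows immediately by applying the same bound to $g=\partial^2_{ij}u$ in place of $u$ and using $|\partial^2_{ij}u|_{m-1,T}\le|u|_{m+1,T}$.

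The step requiring the most care is matching the polynomial degree that ${\cal Q}_h$ actually reproduces with the regularity exponent on the right-hand side. Because $S_k(T)$ may sit strictly between $P_{k-2}(T)$ and $P_{k-1}(T)$, one can only rely on reproduction of $P_{k-2}$, which is precisely what fixes the order $h^{m-1}$ (rather than $h^{m}$) in (\ref{3.3-3}) and (\ref{3.3}) and pins down the admissible range $m\le k$. Everything else is routine scaling-and-summation bookkeeping.
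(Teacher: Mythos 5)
The paper gives no proof of this lemma at all---it is quoted directly from \cite{wy3655}---and your argument (the local Bramble--Hilbert/scaling estimate for the $L^2$ projection on shape-regular elements, with the reduced order $h^{m-1}$ in (\ref{3.3-3}) and (\ref{3.3}) traced to the fact that ${\cal Q}_h$ is only guaranteed to reproduce $P_{k-2}(T)$, followed by squaring, weighting by $h_T^{2s}$ and summing over $T\in{\cal T}_h$) is precisely the standard proof behind the cited result, and it is correct. The only point worth recording is that the local estimate $\|g-\Pi_\ell g\|_{s,T}\le C h_T^{r-s}|g|_{r,T}$ needs $s\le r$, i.e. $s\le m+1$ for (\ref{3.2}) and $s\le m-1$ for (\ref{3.3-3}) and (\ref{3.3}) (otherwise the left-hand $H^s$ norms need not even be finite); this restriction is implicit in the statement and is satisfied in every instance where the paper invokes the lemma.
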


\begin{theorem} \label{theoestimate} Assume that the coefficient functions $a_{ij}$
are uniformly piecewise continuous in $\Omega$ with respect to the
finite element partition $\T_h$. Let $u$ and $(u_h;\lambda_h) \in
W_{h,k}^0\times S_{h,k}$ be the solutions of (\ref{1}) and
(\ref{2})-(\ref{32}), respectively. Assume that the exact solution
$u$ of (\ref{1}) is sufficiently regular such that $u\in
H^{k+1}(\Omega)$. There exists a constant $C$ such that
 \begin{equation}\label{erres}
\| u_h-Q_h u \|_{2,h}+\|\lambda_h-{\cal Q}_h \lambda\|_0 \leq
Ch^{k-1}\|u\|_{k+1},
\end{equation}
provided that the meshsize $h<h_0$ holds true for a sufficiently
small, but fixed $h_0>0$.
\end{theorem}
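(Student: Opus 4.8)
The plan is to invoke the abstract saddle-point (Brezzi) framework directly on the error equations \eqref{sehv}--\eqref{sehv2}, since all the structural hypotheses have already been verified in Section \ref{Section:ExistenceUniquess}. Specifically, the bilinear form $s_h(\cdot,\cdot)$ is bounded (Lemma \ref{lem1}) and coercive on the kernel $Z_h$ (Lemma \ref{lem2}), the form $b_h(\cdot,\cdot)$ is bounded (Lemma \ref{lem1}) and satisfies the \emph{inf-sup} condition (Lemma \ref{lem3}); thus Brezzi's theorem yields a stability estimate of the form
\begin{equation*}
\|e_h\|_{2,h} + \|\gamma_h\|_0 \leq C\left( \sup_{v\in W_{h,k}^0, v\neq 0} \frac{|s_h(Q_h u, v)|}{\|v\|_{2,h}} + \sup_{\sigma\in S_{h,k}, \sigma\neq 0} \frac{|\ell_u(\sigma)|}{\|\sigma\|_0}\right).
\end{equation*}
The entire problem therefore reduces to bounding the two right-hand-side functionals, namely the consistency term $s_h(Q_h u, v)$ appearing in \eqref{sehv} and the approximation-error functional $\ell_u(\sigma)$ defined in \eqref{EQ:ell-u}.

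For the second functional I would start, since it is the cleaner of the two. From the definition \eqref{EQ:ell-u}, $\ell_u(\sigma)=\sum_{T}\sum_{i,j}((I-\mathcal{Q}_h)\partial_{ij}^2 u,\, a_{ij}\sigma)_T$. Because $\mathcal{Q}_h$ is the $L^2$-projection onto $S_{h,k}$ and $a_{ij}\in L^\infty$, I would apply Cauchy--Schwarz together with the orthogonality of $(I-\mathcal{Q}_h)$ and the approximation estimate \eqref{3.3} of Lemma \ref{Lemma5.2}. Taking $s=0$ and $m=k$ in \eqref{3.3} gives $\sum_T \|(I-\mathcal{Q}_h)\partial_{ij}^2 u\|_T^2 \leq Ch^{2(k-1)}\|u\|_{k+1}^2$, so that $|\ell_u(\sigma)|\leq Ch^{k-1}\|u\|_{k+1}\|\sigma\|_0$, which is exactly the required order.

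For the consistency term $s_h(Q_h u, v)$, I would expand using the definition \eqref{EQ:local-s-form} of $s_T$ and exploit the structure of $Q_h u = \{Q_0 u, Q_b u, \mathbf{Q}_g(\nabla u)\}$. The key point is that on each element the jump quantities measured by $s_T$ involve $Q_0 u - Q_b u$ and $\nabla Q_0 u - \mathbf{Q}_g(\nabla u)$; since $u$ itself is continuous with continuous gradient, I would insert $u$ and $\nabla u$ and write, for instance, $Q_0 u - Q_b u = (Q_0 u - u) - (Q_b u - u)$, then control each piece by the trace inequality \eqref{trace-inequality} and the projection estimate \eqref{3.2}. After applying Cauchy--Schwarz across the $s_h$ pairing and absorbing the $\|v\|_{2,h}$ factor, this should yield $|s_h(Q_h u, v)|\leq Ch^{k-1}\|u\|_{k+1}\|v\|_{2,h}$, matching the target rate.

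The main obstacle will be the bookkeeping in the consistency estimate: the two terms in $s_T$ carry different negative powers of $h_T$ ($h_T^{-3}$ and $h_T^{-1}$), and one must verify that, after the trace inequality converts boundary norms into interior norms at the cost of additional factors of $h_T^{-1}$, the powers balance so that each contribution scales precisely like $h^{k-1}\|u\|_{k+1}$ rather than a suboptimal power. Concretely, the $h_T^{-3}$ term paired with $\|Q_0 u - u\|_{\partial T}$ requires the full strength of \eqref{3.2} with the gradient correction term, and one must track the interplay between $s=1$ and $s=0$ in those estimates. Once these scalings are confirmed, combining the two functional bounds with the Brezzi stability estimate immediately delivers \eqref{erres}, completing the proof.
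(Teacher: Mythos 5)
Your proposal is correct and follows essentially the same route as the paper: Brezzi stability applied to the error equations \eqref{sehv}--\eqref{sehv2} via Lemmas \ref{lem3}, \ref{lem1}, \ref{lem2}, then bounding $\ell_u(\sigma)$ with \eqref{3.3} ($m=k$) and $s_h(Q_hu,v)$ with the trace inequality and \eqref{3.2} ($m=k$). The only cosmetic difference is in the consistency term: the paper drops the $Q_bu$ (and $\mathbf{Q}_g$) contribution outright by the edge-projection orthogonality against $v_0-v_b\in P_k(e)$, whereas your splitting $Q_0u-Q_bu=(Q_0u-u)-(Q_bu-u)$ works equally well once you note $\|u-Q_bu\|_{\partial T}\leq\|u-Q_0u\|_{\partial T}$ by best approximation, and the $h$-power bookkeeping you flag does balance to $h^{k-1}$.
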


\begin{proof}  It follows from Lemma \ref{lem3}, Lemma \ref{lem1},
and Lemma \ref{lem2} that the Brezzi's stability conditions are
satisfied for the saddle point system (\ref{sehv})-(\ref{sehv2}).
Thus, there exists a constant $C$ such that
\begin{equation}\label{ess2}
\| e_h \|_{2,h}+\|\gamma_h\|_0 \leq C\left( \sup_{v\in W_{h,k}^0,
v\neq 0} \frac{|s_h(Q_hu, v)|}{\| v\|_{2,h}}+ \sup_{\sigma\in S_h,
\sigma\neq 0} \frac{|\ell_u(\sigma)|}{\|\sigma\|_0} \right).
\end{equation}

Recall that
\begin{equation}\label{EQ:September:03:100}
\begin{split}
s_h(Q_h u, v) = &\sum_{T\in {\cal T}_h}h_T^{-3}\langle Q_0u-Q_bu,
v_0-v_b\rangle_\pT\\
& +\sum_{T\in {\cal T}_h}h_T^{-1} \langle \nabla Q_0u
-\textbf{Q}_g(\nabla u), \nabla v_0 -\textbf{v}_g\rangle_\pT.
\end{split}
\end{equation}
The first term on the right-hand side of (\ref{EQ:September:03:100})
can be estimated by using the Cauchy-Schwarz inequality, the trace
inequality (\ref{trace-inequality}), and the estimate (\ref{3.2})
with $m=k$ as follows
\begin{equation}\label{s1}
\begin{split}
&\left| \sum_{T\in {\cal T}_h}h_T^{-3}\langle Q_0u-Q_bu, v_0-v_b\rangle_\pT\right|\\
= &\left| \sum_{T\in {\cal T}_h}h_T^{-3}\langle Q_0u- u, v_0-v_b\rangle_\pT\right|\\
 \leq & \Big(\sum_{T\in {\cal T}_h}h_T^{-3}\|u-Q_0u\|^2_{\partial
T}\Big)^{\frac{1}{2}} \Big(\sum_{T\in {\cal T}_h}
h_T^{-3}\|v_0-v_b\|^2_{\partial T}\Big)^{\frac{1}{2}}\\
\leq & C\Big(\sum_{T\in {\cal
T}_h}h_T^{-4}\big(\|u-Q_0u\|_T^2+h_T^2\|u-Q_0u\|_{1,T}^2\big)
\Big)^{\frac{1}{2}}\| v\|_{2,h}\\
\leq & Ch^{k-1}\|u\|_{k+1}\|v\|_{2,h}.
\end{split}
\end{equation}
Similarly, the second term on the right-hand side of
(\ref{EQ:September:03:100}) has the following estimate
\begin{equation}\label{s2}
\left|\sum_{T\in {\cal T}_h}h_T^{-1}\langle \nabla Q_0u
-\textbf{Q}_g(\nabla u ), \nabla v_0 -\textbf{v}_g \rangle_\pT
\right|\leq Ch^{k-1}\|u\|_{k+1} \| v\|_{2,h}.
\end{equation}
Combining (\ref{EQ:September:03:100}) with (\ref{s1}) and (\ref{s2})
gives
\begin{equation}\label{EQ:True:01}
|s_h(Q_h u, v)| \leq C h^{k-1} \|u\|_{k+1} \| v\|_{2,h}.
\end{equation}

As to the second term on the right-hand side of (\ref{ess2}), using
(\ref{EQ:ell-u}) and the estimate (\ref{3.3})  with $m=k$ we have
\begin{equation}\label{aij}
\begin{split}
|\ell_u(\sigma)| & = \left|\sum_{T\in\T_h} \sum_{i,j=1}^d (I-{\cal
Q}_h)\partial_{ij}^2u, a_{ij}\sigma)_T\right|\\
& \leq \sum_{i,j=1}^d \|a_{ij}\|_{L^\infty}\ \|(I-{\cal
Q}_h)\partial_{ij}^2u\|_0 \ \|\sigma\|_0 \\
&\leq C h^{k-1}\|u\|_{k+1} \|\sigma\|_0.
\end{split}
\end{equation}
Substituting (\ref{EQ:True:01}) and (\ref{aij}) into (\ref{ess2})
gives the error estimate (\ref{erres}).
\end{proof}

\section{Error Estimates in $H^1$ and
$L^2$}\label{Section:H1L2Error}
We first establish an estimate for the discrete weak second order
partial derivatives.
\begin{lemma}\label{lemma7.2} There exists a constant $C$ such that
for any $v\in W_{k}(T)$, we have
\begin{equation}\label{qaij}
\|\partial^2_{ij,d} v\|_T^2 \leq C\left(\|\partial_{ij}^2 v_0\|_T^2
+ s_T(v,v)\right),
\end{equation}
where $C$ is a generic constant independent of $T\in\T_h$.
\end{lemma}

\begin{proof}
From (\ref{2.4new}), for any $\varphi\in S_k(T)$, we have
\begin{equation*}\label{7.22}
(\partial_{ij,d}^2 v, \varphi)_T = (\partial^2_{ij}v_0
,\varphi)_T-\langle v_b-v_0, \partial_j\varphi n_i\rangle_{\pT}
+\langle v_{gi}-\partial_i v_0,\varphi n_j\rangle_{\pT}.
\end{equation*}
Using the Cauchy-Schwarz inequality, the trace inequality (\ref{x}),
and the inverse inequality we arrive at
\begin{equation*}\label{7.22new}
\begin{split}
|(\partial_{ij,d}^2 v, \varphi)_T| \leq & \|\partial^2_{ij}v_0\|_T
\|\varphi\|_T + \|v_b-v_0\|_\pT \|\partial_j\varphi\|_{\pT}
+\|v_{gi}-\partial_i v_0\|_\pT \|\varphi\|_{\pT}\\
\leq & \left(\|\partial^2_{ij}v_0\|_T +
Ch_T^{-\frac32}\|v_b-v_0\|_\pT + Ch_T^{-\frac12}\|v_{gi}-\partial_i
v_0\|_\pT\right)\|\varphi\|_T.
\end{split}
\end{equation*}
Thus,
$$
\|\partial_{ij,d}^2 v\|_T^2\leq C\left( \|\partial^2_{ij}v_0\|_T^2 +
h_T^{-3}\|v_b-v_0\|_\pT^2 + h_T^{-1}\|v_{gi}-\partial_i v_0\|_\pT^2
\right),
$$
which verifies the inequality (\ref{qaij}). This completes the proof
of the Lemma.
\end{proof}

\medskip

Consider the problem of solving an unknown function $w$ such that
\begin{align}\label{dual1}
\sum_{i,j=1}^d \partial_{ji}^2 (a_{ij} w)=& \ \theta,\qquad \text{in}\ \Omega,\\
w=& \ 0,\qquad \text{on}\ \partial\Omega, \label{dual2}
\end{align}
where $\theta$ is a given function. With the bilinear form
$b(\cdot,\cdot)$ given by (\ref{b-form}), a variational formulation
for (\ref{dual1})-(\ref{dual2}) reads as follows: Find $w\in
L^2(\Omega)$ such that
\begin{equation}\label{Dual-Variational}
b(v, w) = (\theta, v)\qquad \forall v\in H^2(\Omega)\cap
H_0^1(\Omega).
\end{equation}
The problem (\ref{dual1})-(\ref{dual2}) is said to be
$H^{1+s}$-regular, $s\in [0,1]$, if for any $\theta\in
H^{s-1}(\Omega)$, there exists a unique $w\in H^{1+s}(\Omega)\cap
H_0^1(\Omega)$ satisfying (\ref{Dual-Variational}) and the following
a priori estimate:
\begin{equation}\label{regul}
\|w\|_{1+s}\leq C\|\theta\|_{s-1}.
\end{equation}

\begin{lemma}\label{Lemma:TechnicalEquality}
Assume that the coefficients $a_{ij}$ are in $C^1(\Omega)$. Then,
for any $v=\{v_0, v_b, \bv_g\}\in W_{h,k}^0$, the following identity
holds true
\begin{equation}\label{2.14:800}
\begin{split}
(v_0, \theta) =&\sum_{T\in{\cal T}_h} \sum_{i,j=1}^d
(a_{ij}\partial^2_{ij,d} v,
w)_T-\langle(v_{gi} -\partial_i v_0) n_j, ({\cal Q}_h-I)(a_{ij}w)\rangle_{\partial T}\\
&+ \langle (v_b-v_0) n_i,\partial_j( {\cal
Q}_h-I)(a_{ij}w)\rangle_{\partial T}.
\end{split}
\end{equation}

\end{lemma}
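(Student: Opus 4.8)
The plan is to verify (\ref{2.14:800}) by casting both sides into a common ``element-wise integration by parts'' form and matching them term by term. Since $v_0$ is only piecewise polynomial and discontinuous across $\E_h^0$, I cannot feed it into the variational formulation (\ref{Dual-Variational}) directly; instead I start from the strong form $\theta=\sum_{i,j=1}^d\partial_{ji}^2(a_{ij}w)$ and write $(v_0,\theta)=\sum_{T\in\T_h}\sum_{i,j=1}^d (v_0,\partial_{ji}^2(a_{ij}w))_T$. Because $a_{ij}\in C^1(\Omega)$ and $w$ is sufficiently regular, I integrate by parts twice on each $T$, moving both derivatives from $a_{ij}w$ onto $v_0$, to obtain
\begin{equation*}
(v_0,\theta)=\sum_{T\in\T_h}\sum_{i,j=1}^d\Big[(\partial_{ij}^2 v_0, a_{ij}w)_T-\langle \partial_j v_0\, n_i, a_{ij}w\rangle_\pT+\langle v_0 n_j,\partial_i(a_{ij}w)\rangle_\pT\Big].
\end{equation*}

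Next I expand the leading term on the right-hand side of (\ref{2.14:800}). For each fixed $i,j$ the polynomial $\partial_{ij,d}^2 v\in S_k(T)$, so I may replace $a_{ij}w$ by its projection, $(\partial_{ij,d}^2 v, a_{ij}w)_T=(\partial_{ij,d}^2 v, {\cal Q}_h(a_{ij}w))_T$, and then apply (\ref{2.4new}) with $\varphi={\cal Q}_h(a_{ij}w)\in S_k(T)$. Since $\partial_{ij}^2 v_0\in P_{k-2}(T)\subseteq S_k(T)$, the volume term simplifies to $(\partial_{ij}^2 v_0,{\cal Q}_h(a_{ij}w))_T=(\partial_{ij}^2 v_0, a_{ij}w)_T$ by self-adjointness of ${\cal Q}_h$, giving
\begin{equation*}
\sum_{T,i,j}(a_{ij}\partial_{ij,d}^2 v, w)_T=\sum_{T,i,j}\Big[(\partial_{ij}^2 v_0,a_{ij}w)_T-\langle(v_b-v_0)n_i,\partial_j{\cal Q}_h(a_{ij}w)\rangle_\pT+\langle(v_{gi}-\partial_i v_0)n_j,{\cal Q}_h(a_{ij}w)\rangle_\pT\Big].
\end{equation*}
Adding the two correction terms of (\ref{2.14:800}) then turns every occurrence of ${\cal Q}_h(a_{ij}w)$ in the boundary integrals into $a_{ij}w$, because ${\cal Q}_h(a_{ij}w)-({\cal Q}_h-I)(a_{ij}w)=a_{ij}w$, so the whole right-hand side of (\ref{2.14:800}) becomes
\begin{equation*}
\sum_{T,i,j}\Big[(\partial_{ij}^2 v_0,a_{ij}w)_T+\langle(v_{gi}-\partial_i v_0)n_j, a_{ij}w\rangle_\pT-\langle(v_b-v_0)n_i,\partial_j(a_{ij}w)\rangle_\pT\Big].
\end{equation*}

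It remains to match this with the integration-by-parts expression for $(v_0,\theta)$. The volume terms agree outright. Using the symmetry $a_{ij}=a_{ji}$ and relabeling $i\leftrightarrow j$, I identify $-\sum\langle\partial_i v_0\,n_j,a_{ij}w\rangle_\pT$ with $-\sum\langle\partial_j v_0\,n_i,a_{ij}w\rangle_\pT$ and $\sum\langle v_0 n_i,\partial_j(a_{ij}w)\rangle_\pT$ with $\sum\langle v_0 n_j,\partial_i(a_{ij}w)\rangle_\pT$, so those contributions coincide as well. The only leftover face integrals are $\sum_{T}\langle v_{gi}n_j,a_{ij}w\rangle_\pT$ and $\sum_T\langle v_b n_i,\partial_j(a_{ij}w)\rangle_\pT$. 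Since $v\in W_{h,k}^0$, both $v_b$ and $\bv_g$ are single-valued on interior faces while $\bn$ reverses sign between adjacent elements, and on $\partial\Omega$ one has $v_b=0$ and $w=0$ (hence $a_{ij}w=0$); therefore each of these two sums vanishes, which establishes (\ref{2.14:800}).

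The main obstacle is the careful bookkeeping of the boundary integrals: tracking which index sits on the normal and which on the derivative, invoking the symmetry of $a$ to realign them, and confirming that the $v_b$- and $\bv_g$-integrals genuinely cancel. The cancellation hinges on two facts that must be applied at exactly the right spot — the single-valuedness of $v_b,\bv_g$ across $\E_h^0$ together with the sign flip of $\bn$, and the homogeneous conditions $v_b|_{\partial\Omega}=0$ and $w|_{\partial\Omega}=0$ — and on the regularity $a_{ij}\in C^1$ needed to make $\partial_j(a_{ij}w)$ and the double integration by parts legitimate.
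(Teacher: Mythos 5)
Your proof is correct and follows essentially the same route as the paper's: element-wise double integration by parts of $(v_0,\theta)$ against the strong dual equation, the identity (\ref{2.4new}) tested with $\varphi={\cal Q}_h(a_{ij}w)$ together with the self-adjointness of ${\cal Q}_h$ (using $\partial^2_{ij}v_0\in S_k(T)$ and $\partial^2_{ij,d}v\in S_k(T)$), and cancellation of the $v_b$- and $\bv_g$-face integrals via single-valuedness, the normal sign flip, and the homogeneous conditions $v_b|_{\partial\Omega}=0$, $w|_{\partial\Omega}=0$. The only differences are organizational — you verify the two sides match (relabeling indices via the symmetry of $a$) whereas the paper derives the identity forward by inserting $v_b$ and $v_{gi}$ into the boundary terms at the outset — and these are immaterial.
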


\begin{proof} By testing (\ref{dual1}) with $v_0$ on each element $T\in\T_h$,
we obtain from the usual integration by parts
\begin{equation}\label{2.11}
\begin{split}
(\theta, v_0)
=&\sum_{T\in{\cal T}_h}(\sum_{i,j=1}^d \partial_{ji}^2 (a_{ij}w), v_0)_T\\
=&\sum_{T\in{\cal T}_h} \sum_{i,j=1}^d (a_{ij}w,\partial_{ij}^2
v_0)_T-\langle a_{ij}w n_j, \partial_i v_0\rangle_{\partial
T}+\langle\partial_j (a_{ij}w), v_0 n_i\rangle_{\partial T}\\
=&\sum_{T\in{\cal T}_h} \sum_{i,j=1}^d (a_{ij}w,\partial_{ij}^2
v_0)_T-\langle a_{ij}w n_j, \partial_i v_0-
v_{gi}\rangle_{\partial T}\\
&+\langle \partial_j (a_{ij}w)
 n_i, v_0-v_b \rangle_{\partial T},\\
\end{split}
\end{equation}
where we have used the homogeneous boundary condition (\ref{dual2})
in the third line and the fact that $a_{ij}\in C^1(\Omega)$ and
$v_b=0$ on $\partial\Omega$ in the fourth line.

From (\ref{2.4new}) with $\varphi={\cal Q}_h(a_{ij}w)$, we have
\begin{align*}
(\partial^2_{ij,d}v, {\cal Q}_h(a_{ij}w))_T = &(\partial^2_{ij}
v_0,{\cal Q}_h(a_{ij}w))_T - \langle v_b-v_0, n_i\partial_j {\cal
Q}_h(a_{ij}w)\rangle_{\pT} \\
& +\langle v_{gi} -\partial_i v_0, n_j{\cal Q}_h(a_{ij}w)\rangle_{\pT}\\
=&(\partial^2_{ij}v_0,a_{ij}w)_T - \langle v_b-v_0, n_i\partial_j
{\cal Q}_h(a_{ij}w)\rangle_{\pT} \\
&+\langle v_{gi} -\partial_i v_0, n_j{\cal
Q}_h(a_{ij}w)\rangle_{\pT},
\end{align*}
which leads to
\begin{equation}\label{2.13}
\begin{split}
&(\partial_{ij}^2 v_0, a_{ij}w)_T =(\partial^2_{ij,d} v, {\cal
Q}_h(a_{ij}w))_T
\\&\qquad -\langle v_{gi} -\partial_i v_0, n_j{\cal Q}_h(a_{ij}w)
\rangle_{\partial T}+\langle v_b-v_0, n_i\partial_j {\cal
Q}_h(a_{ij}w)\rangle_{\partial T}.
\end{split}
\end{equation}
Using (\ref{2.13}), we can rewrite (\ref{2.11}) as follows
\begin{equation}\label{2.14}
\begin{split}
(v_0, \theta) =&\sum_{T\in{\cal T}_h}\sum_{i,j=1}^d
(\partial^2_{ij,d} v, {\cal Q}_h(a_{ij}w))_T-\langle v_{gi}
-\partial_i v_0, n_j{\cal Q}_h(a_{ij}w) \rangle_{\partial T}\\ &+
\langle v_b-v_0, n_i\partial_j {\cal Q}_h(a_{ij}w)\rangle_{\partial
T}-\langle a_{ij} w n_j, \partial_i v_0- v_{gi}\rangle_{\partial
T}\\
&+\langle n_i \partial_j (a_{ij}w), v_0-v_b\rangle_{\partial T}\\
=&\sum_{T\in{\cal T}_h} \sum_{i,j=1}^d (a_{ij}\partial^2_{ij,d} v,
w)_T-\langle(v_{gi} -\partial_i v_0) n_j, ({\cal Q}_h-I)(a_{ij}w)\rangle_{\partial T}\\
&+ \langle (v_b-v_0) n_i,\partial_j( {\cal
Q}_h-I)(a_{ij}w)\rangle_{\partial T},
\end{split}
\end{equation}
which is the desired identity (\ref{2.14:800}).
\end{proof}

The following Lemma is developed for an estimate of the last two
terms on the right-hand side of (\ref{2.14:800}) with the
$H^1$-regularity assumption for the dual problem
(\ref{Dual-Variational}).

\begin{lemma}\label{Lemma:TechnicalEstimates:01}
Assume that the coefficient matrix $\{a_{ij}\}_{d\times d}$ is
regular so that $a_{ij}\in \Pi_{T\in\T_h} W^{1,\infty}(T)$. Then,
there exists a constant $C$ such that for any $v\in W_{h,k}^0$, we
have
\begin{eqnarray}\label{2.14.100:10}
\left|\sum_{T\in{\cal T}_h} \sum_{i,j=1}^d  \langle(v_{gi}
-\partial_i v_0) n_j, ({\cal Q}_h-I)(a_{ij}w)\rangle_{\partial T}
\right| &\leq & Ch \ \|v\|_{2,h} \|\theta\|_{-1},
\\ \label{2.14.110:10} \left|\sum_{T\in{\cal T}_h} \sum_{i,j=1}^d
\langle(v_{b} -v_0) n_i,
\partial_j({\cal Q}_h-I)(a_{ij}w)\rangle_{\partial T}
\right| &\leq & Ch \  \|v\|_{2,h}\|\theta\|_{-1},
\end{eqnarray}
provided that the dual problem (\ref{Dual-Variational}) has the
$H^1$-regularity estimate (\ref{regul}) with $s=0$.
\end{lemma}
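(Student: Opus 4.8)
The plan is to bound each boundary sum by Cauchy--Schwarz, separating the "jump" factors (which are controlled by $s_h(v,v)$, hence by $\|v\|_{2,h}$) from the projection-error factors involving $({\cal Q}_h-I)(a_{ij}w)$, and then to absorb the latter using the approximation estimate (\ref{3.3-3}) together with the $H^1$-regularity bound (\ref{regul}) with $s=0$, namely $\|w\|_1 \le C\|\theta\|_{-1}$.

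Let me sketch the details for (\ref{2.14.100:10}). Applying Cauchy--Schwarz edgewise and inserting the scaling weights $h_T^{\pm 1/2}$ to match the definition of $s_T$ in (\ref{EQ:local-s-form}), I would write
\begin{equation*}
\begin{split}
&\left|\sum_{T\in{\cal T}_h}\sum_{i,j=1}^d \langle(v_{gi}-\partial_i v_0)n_j,({\cal Q}_h-I)(a_{ij}w)\rangle_{\partial T}\right|\\
\le & \left(\sum_{T\in{\cal T}_h} h_T^{-1}\|\nabla v_0-\bv_g\|_{\partial T}^2\right)^{\!\frac12}\left(\sum_{T\in{\cal T}_h}\sum_{i,j=1}^d h_T\|({\cal Q}_h-I)(a_{ij}w)\|_{\partial T}^2\right)^{\!\frac12}.
\end{split}
\end{equation*}
The first factor is bounded by $\|v\|_{2,h}$ by the definition (\ref{EQ:triple-bar}) of the norm. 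For the second factor I would use the trace inequality (\ref{trace-inequality}) on $({\cal Q}_h-I)(a_{ij}w)\in H^1(T)$ to convert boundary norms into interior norms, picking up $h_T\cdot(h_T^{-1}\|\cdot\|_T^2 + h_T\|\nabla\cdot\|_T^2)$; this reduces matters to estimating $\|({\cal Q}_h-I)(a_{ij}w)\|_T$ and $h_T\|\nabla({\cal Q}_h-I)(a_{ij}w)\|_T$. Since $a_{ij}\in W^{1,\infty}(T)$, the product $a_{ij}w$ lies in $H^1$ with $\|a_{ij}w\|_{1,T}\le C\|w\|_{1,T}$, so applying (\ref{3.3-3}) with $m=2$ (i.e. the case $s=0$ and $s=1$ combined, yielding an $O(h)$ factor) gives $\le Ch\,\|w\|_1 \le Ch\,\|\theta\|_{-1}$ after the regularity bound. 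Collecting the two factors produces $Ch\,\|v\|_{2,h}\|\theta\|_{-1}$, which is (\ref{2.14.100:10}).

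The estimate (\ref{2.14.110:10}) is analogous but with one extra derivative on the projection-error factor and the stronger jump weight $h_T^{-3}$. Here I would pair $h_T^{-3/2}\|v_b-v_0\|_{\partial T}$ (controlled by $\|v\|_{2,h}$) against $h_T^{3/2}\|\partial_j({\cal Q}_h-I)(a_{ij}w)\|_{\partial T}$, again using the trace inequality and then (\ref{3.3-3}) applied to $\nabla(a_{ij}w)$; the two powers of $h_T$ coming from the extra normalization and the loss of one approximation order combine to leave a net single power $h$, giving the claimed bound.

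The main obstacle I anticipate is the bookkeeping of the $h_T$-powers: the norm $\|\cdot\|_{2,h}$ encodes the weights $h_T^{-3/2}$ and $h_T^{-1/2}$ on the two jump terms, and one must verify that the complementary weights $h_T^{1/2}$ and $h_T^{3/2}$ placed on the projection-error factors, once passed through the trace inequality (\ref{trace-inequality}) (which itself trades a boundary norm for $h_T^{-1}$ times an interior norm plus $h_T$ times a gradient norm), match exactly the powers supplied by the approximation estimate (\ref{3.3-3}) so that the residual scaling is precisely $h^1$ and no stray negative power survives. A secondary point requiring care is that (\ref{3.3-3}) as stated involves $\|u\|_{m-1}$, so to extract the full $O(h)$ decay one uses it with the product $a_{ij}w$ in the role of the argument and exploits $a_{ij}\in W^{1,\infty}(T)$ to keep $\|a_{ij}w\|_{1,T}\le C\|w\|_{1,T}$ uniformly in $T$.
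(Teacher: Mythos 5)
Your proposal matches the paper's own argument for (\ref{2.14.100:10}) almost verbatim: the paper performs exactly this Cauchy--Schwarz splitting (weight $h_T^{-1/2}$ on the jump factor, absorbed into $\|v\|_{2,h}$, and $h_T^{1/2}$ on the projection-error factor), then invokes the trace inequality (\ref{trace-inequality}), the $L^2$-projection estimates of Lemma \ref{Lemma5.2}, and the $H^1$-regularity $\|w\|_1\le C\|\theta\|_{-1}$, and it disposes of (\ref{2.14.110:10}) with the same weight bookkeeping ``in a similar way.'' The only point to tighten in your sketch of the second bound is that ${\cal Q}_h$ does not commute with $\partial_j$, so instead of applying (\ref{3.3-3}) to $\nabla(a_{ij}w)$ you should bound $\|\partial_j({\cal Q}_h-I)(a_{ij}w)\|_{T}\le C\|a_{ij}w\|_{1,T}$ via the elementwise $H^1$-stability of ${\cal Q}_h$ (using the inverse trace inequality (\ref{x}) on the polynomial part), which produces the same net single power of $h$.
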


\begin{proof} We only present a proof for the inequality
(\ref{2.14.100:10}), as (\ref{2.14.110:10}) can be derived in a
similar way. From the Cauchy-Schwarz inequality, the trace
inequality (\ref{trace-inequality}), and the estimates in Lemma
\ref{Lemma5.2} we have
\begin{equation}\label{2.14.100}
\begin{split}
& \left|\sum_{T\in{\cal T}_h} \sum_{i,j=1}^d  \langle(v_{gi}
-\partial_i v_0) n_j, ({\cal Q}_h-I)(a_{ij}w)\rangle_{\partial T}
\right| \\
\leq & \sum_{T\in{\cal T}_h} \sum_{i,j=1}^d \|v_{gi} -\partial_i
v_0\|_\pT \|({\cal Q}_h-I)(a_{ij}w)\|_\pT\\
\leq & \ C \left( \sum_{T\in{\cal T}_h} \sum_{i,j=1}^d h_T \|({\cal
Q}_h-I)(a_{ij}w)\|_\pT^2\right)^{\frac12} \|v\|_{2,h}\\
\leq & \ Ch \|w\|_1 \|v\|_{2,h}  \leq   Ch \|\theta\|_{-1}
\|v\|_{2,h},
\end{split}
\end{equation}
where we have used the $H^1$-regularity assumption in the last line.
This completes the proof of the lemma.
\end{proof}

Note that if $P_1(T) \subseteq S_k(T)$ for all $T\in\T_h$ and
$a_{ij}\in \Pi_{T\in\T_h} W^{2,\infty}(T)$, then from the trace
inequality (\ref{trace-inequality}) and the standard error estimate
for the $L^2$ projection ${\cal Q}_h$ we have
\begin{equation}
\begin{split}
\|({\cal Q}_h - I)(a_{ij}w)\|_\pT^2 \leq & C h_T^{-1}(\|({\cal Q}_h
- I)(a_{ij}w)\|_T^2 + h_T^2 \|({\cal Q}_h -
I)(a_{ij}w)\|_{1,T}^2)\\
\leq & C h_T^3 \|a_{ij}\|_{2,\infty,T}^2 \|w\|_{2,T}^2.
\end{split}
\end{equation}
By substituting the above inequality into the third line of
(\ref{2.14.100}) and then assuming the $H^2$-regularity
(\ref{regul}) we obtain the following result.

\begin{lemma}\label{Lemma:TechnicalEstimates:02}
Assume that the coefficients $a_{ij}$ are sufficient smooth on each
element such that $a_{ij}\in \Pi_{T\in\T_h} W^{2,\infty}(T)$. In
addition, assume $P_1(T)\subset S_k(T)$ for each element $T\in\T_h$.
Then, there exists a constant $C$ such that for any $v\in
W_{h,k}^0$, we have
\begin{eqnarray}\label{2.14.100:12}
\left|\sum_{T\in{\cal T}_h} \sum_{i,j=1}^d  \langle(v_{gi}
-\partial_i v_0) n_j, ({\cal Q}_h-I)(a_{ij}w)\rangle_{\partial T}
\right| & \leq & Ch^2 \ \|v\|_{2,h}\|\theta\|_{0},\\
\label{2.14.110:15} \left|\sum_{T\in{\cal T}_h} \sum_{i,j=1}^d
\langle(v_{b} -v_0) n_i,
\partial_j({\cal Q}_h-I)(a_{ij}w)\rangle_{\partial T}
\right|  &\leq & Ch^2 \  \|v\|_{2,h}\|\theta\|_{0},
\end{eqnarray}
provided that the regularity estimate (\ref{regul}) holds true with
$s=1$.
\end{lemma}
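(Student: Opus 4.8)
The plan is to imitate the proof of Lemma~\ref{Lemma:TechnicalEstimates:01}, upgrading the $H^1$-regularity boundary estimate to the sharper $H^2$-regularity version. For (\ref{2.14.100:12}) this is essentially a substitution: starting from the Cauchy--Schwarz step in (\ref{2.14.100}), I would pair $\|v_{gi}-\partial_i v_0\|_\pT$ (weighted by $h_T^{-1/2}$) against $\|({\cal Q}_h-I)(a_{ij}w)\|_\pT$ (weighted by $h_T^{1/2}$). The first factor is controlled by $\|v\|_{2,h}$ through the definition of $s_h$ in (\ref{EQ:local-s-form}) and (\ref{EQ:triple-bar}). Into the second factor I insert the boundary projection bound $\|({\cal Q}_h-I)(a_{ij}w)\|_\pT^2\le C h_T^3\|a_{ij}\|_{2,\infty,T}^2\|w\|_{2,T}^2$ established just before the statement of the lemma, which produces a factor of order $h^2\|w\|_2$; the regularity estimate (\ref{regul}) with $s=1$ then replaces $\|w\|_2$ by $C\|\theta\|_0$ and closes this inequality.

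For (\ref{2.14.110:15}) the same Cauchy--Schwarz idea applies, but with a different weight split dictated by the $h_T^{-3}$ scaling of the $\|v_0-v_b\|_\pT^2$ term in $s_h$. I would pair $\|v_b-v_0\|_\pT$ (weighted by $h_T^{-3/2}$) against $\|\partial_j({\cal Q}_h-I)(a_{ij}w)\|_\pT$ (weighted by $h_T^{3/2}$). The first factor again gives $\|v\|_{2,h}$ via $s_h$. Everything then reduces to proving the gradient analogue of the boundary projection estimate, namely $\|\partial_j({\cal Q}_h-I)(a_{ij}w)\|_\pT^2\le C h_T\|a_{ij}\|_{2,\infty,T}^2\|w\|_{2,T}^2$, so that the second factor becomes $(\sum_T h_T^3\cdot h_T\|w\|_{2,T}^2)^{1/2}=(\sum_T h_T^4\|w\|_{2,T}^2)^{1/2}\le Ch^2\|w\|_2$, and (\ref{regul}) with $s=1$ finishes the estimate.

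The gradient boundary estimate is the one genuinely new computation and the main obstacle. I would apply the trace inequality (\ref{trace-inequality}) to $\partial_j({\cal Q}_h-I)(a_{ij}w)$, bounding $\|\partial_j({\cal Q}_h-I)(a_{ij}w)\|_\pT^2$ by $Ch_T^{-1}\|\partial_j({\cal Q}_h-I)(a_{ij}w)\|_T^2+Ch_T\|\partial_j({\cal Q}_h-I)(a_{ij}w)\|_{1,T}^2$. The first term is the $H^1$-seminorm of the projection error, and since $P_1(T)\subseteq S_k(T)$ the standard $L^2$-projection estimate gives $\|({\cal Q}_h-I)(a_{ij}w)\|_{1,T}^2\le C h_T^2\|a_{ij}w\|_{2,T}^2$, so it contributes $C h_T\|a_{ij}w\|_{2,T}^2$. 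The subtle point is the second term: it involves the $H^2$-seminorm of the projection error, which does \emph{not} gain a power of $h_T$ (the second derivatives of ${\cal Q}_h(a_{ij}w)$ need not match those of $a_{ij}w$), so $\|({\cal Q}_h-I)(a_{ij}w)\|_{2,T}^2\le C\|a_{ij}w\|_{2,T}^2$ is the best available estimate; however, it is already multiplied by $h_T$ in the trace inequality, so it too contributes only $C h_T\|a_{ij}w\|_{2,T}^2$. Bounding $\|a_{ij}w\|_{2,T}\le C\|a_{ij}\|_{2,\infty,T}\|w\|_{2,T}$ by the product rule (using $a_{ij}\in W^{2,\infty}(T)$) then delivers the required gradient estimate. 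I expect the only delicate bookkeeping to be confirming that this non-gaining $H^2$-seminorm term lands at exactly the right order once combined with the $h_T^{3}$ weight coming from the $v_b-v_0$ pairing.
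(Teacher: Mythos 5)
Your proposal is correct and follows essentially the same route as the paper: substitute the sharpened boundary projection estimate $\|({\cal Q}_h-I)(a_{ij}w)\|_{\partial T}^2\le Ch_T^3\|a_{ij}\|_{2,\infty,T}^2\|w\|_{2,T}^2$ into the Cauchy--Schwarz framework of (\ref{2.14.100}) and invoke (\ref{regul}) with $s=1$. Your explicit derivation of the gradient boundary bound $\|\partial_j({\cal Q}_h-I)(a_{ij}w)\|_{\partial T}^2\le Ch_T\|a_{ij}\|_{2,\infty,T}^2\|w\|_{2,T}^2$ (including the correct observation that the non-gaining $H^2$-seminorm term is harmless because of the $h_T$ weight in the trace inequality) is exactly the detail the paper leaves to the reader for (\ref{2.14.110:15}).
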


\begin{theorem}\label{Thm:H1errorestimate} Let
$u_h=\{u_0, u_b, \bu_g\}\in W_{h,k}^0$ be the approximate solution
of (\ref{1}) arising from the primal-dual weak Galerkin finite
element algorithm (\ref{2})-(\ref{32}). Assume that $a_{ij}\in
C^1(\Omega)$ and the exact solution $u$ of (\ref{1}) satisfies $u\in
H^{k+1}(\Omega)$. Then, there exists a constant $C$ such that
\begin{equation}\label{e0-H1}
\left(\sum_{T\in\T_h}\|\nabla u_0 - \nabla u\|_T^2\right)^{\frac12}
\leq Ch^{k} \|u\|_{k+1},
\end{equation}
provided that the meshsize $h$ is sufficiently small and the dual
problem (\ref{dual1})-(\ref{dual2}) has the $H^1$-regularity
estimate (\ref{regul}) with $s=0$.
\end{theorem}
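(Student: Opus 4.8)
The plan is to estimate the broken gradient of the interior error by an Aubin--Nitsche duality argument built on the identity of Lemma~\ref{Lemma:TechnicalEquality}. First I would split $\nabla u_0-\nabla u=\nabla(u_0-Q_0u)+\nabla(Q_0u-u)$; the projection part is already of order $h^k$ by (\ref{3.2}) with $s=1,\ m=k$, so it suffices to control $G:=\big(\sum_{T\in\T_h}\|\nabla(u_0-Q_0u)\|_T^2\big)^{1/2}$, the broken $H^1$-seminorm of the interior component $e_{h,0}:=u_0-Q_0u$ of the error $e_h=u_h-Q_hu\in W_{h,k}^0$. For a given datum $\theta\in L^2(\Omega)$ I solve the dual problem (\ref{dual1})--(\ref{dual2}) and invoke the $H^1$-regularity (\ref{regul}) with $s=0$, i.e.\ $\|w\|_1\le C\|\theta\|_{-1}$.

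Next I would apply Lemma~\ref{Lemma:TechnicalEquality} with $v=e_h$, writing $(e_{h,0},\theta)$ as the volume term $\sum_T\sum_{ij}(a_{ij}\partial^2_{ij,d}e_h,w)_T$ minus the two face terms involving $(\Q_h-I)(a_{ij}w)$. Splitting $w=\Q_hw+(I-\Q_h)w$ in the volume term, the $\Q_hw$-part is $b_h(e_h,\Q_hw)$, which by the error equation (\ref{sehv2}) equals $\ell_u(\Q_hw)$; using the $\Q_h$-orthogonality and the extra power of $h$ from $\|a_{ij}-\bar a_{ij}\|_{L^\infty(T)}\le Ch_T$ (as $a_{ij}\in C^1$), together with (\ref{3.3}), this is $O(h^k)\|u\|_{k+1}\|\theta\|_{-1}$. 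The $(I-\Q_h)w$-part is handled by the same averaging and Lemma~\ref{lemma7.2}, producing a contribution bounded by $Ch\,G\,\|\theta\|_{-1}$ to be absorbed later. The two face terms are exactly those of Lemma~\ref{Lemma:TechnicalEstimates:01}, hence bounded by $Ch\|e_h\|_{2,h}\|\theta\|_{-1}$; plugging in $\|e_h\|_{2,h}\le Ch^{k-1}\|u\|_{k+1}$ from Theorem~\ref{theoestimate} gives $O(h^k)\|u\|_{k+1}\|\theta\|_{-1}$. Collecting everything yields $|(e_{h,0},\theta)|\le C\big(h^k\|u\|_{k+1}+h\,G\big)\|\theta\|_{-1}$.

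To turn this into a bound on $G$ I would test against the elementwise datum $\theta$ defined by $\theta|_T=-\Delta e_{h,0}$, for which integration by parts on each $T$ gives $(e_{h,0},\theta)=\sum_T\|\nabla e_{h,0}\|_T^2-\sum_T\langle e_{h,0},\partial_n e_{h,0}\rangle_{\pT}$. The face integrals are reorganized by inserting the single-valued components $u_b-Q_bu$ and $\bu_g-\mathbf{Q}_g(\nabla u)$ of $e_h$: since these are continuous across $\E_h^0$ and vanish on $\partial\Omega$, the surviving contributions are the differences $u_0-u_b$ and $\nabla u_0-\bu_g$ of $e_h$, each controlled by $s_h(e_h,e_h)^{1/2}\le\|e_h\|_{2,h}$ carrying the matching power of $h_T$. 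A parallel computation gives $\|\theta\|_{-1}\le C\big(G+h^k\|u\|_{k+1}\big)$. Substituting into the duality bound and taking $h$ small enough to absorb the $h\,G$ terms on the left would give $G\le Ch^k\|u\|_{k+1}$, which together with the projection estimate proves (\ref{e0-H1}).

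The hard part will be this last step: the $L^2$-pairing only controls $e_{h,0}$ in the norm dual to $H^{-1}$, and converting that into the \emph{broken} $H^1$-seminorm inevitably generates the face remainders above. Optimality (order $h^k$, not $h^{k-1}$) rests on every such remainder being matched against a stabilization contribution $s_h$ with the correct power of $h_T$; in particular the face term pairing the trace $u_b-Q_bu$ with the jump of $\partial_n e_{h,0}$ must be shown not to demand an auxiliary $L^2$ bound on $e_{h,0}$ at suboptimal order. Closing this requires careful bookkeeping of the $h_T$-powers, a discrete Poincar\'e inequality on $W_{h,k}^0$, and the small-mesh absorption; this is the delicate point of the proof.
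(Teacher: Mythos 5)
Your first two stages track the paper's own argument almost verbatim: apply Lemma \ref{Lemma:TechnicalEquality} with $v=e_h$, split $w=\Q_h w+(I-\Q_h)w$ in the volume term, use the error equation (\ref{sehv2}) together with the $\Q_h$-orthogonality and the $C^1$-averaging of $a_{ij}$, bound the face terms by Lemma \ref{Lemma:TechnicalEstimates:01} and Theorem \ref{theoestimate}, and arrive at $|(e_0,\theta)|\le C\bigl(h^k\|u\|_{k+1}+h\,G\bigr)\|\theta\|_{-1}$. Up to there you are fine. The gap is in your final conversion step, and it is precisely the point you yourself flag as ``delicate'' without closing it. Taking $\theta|_T=-\Delta e_0$ forces you to control $\|\theta\|_{-1}$ for a piecewise polynomial whose normal derivative jumps across interior faces. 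Testing against $\phi\in H_0^1(\Omega)$ and inserting the single-valued $\be_g$ gives
$(\theta,\phi)=\sum_T(\nabla e_0,\nabla\phi)_T-\sum_T\langle(\nabla e_0-\be_g)\cdot\bn,\phi\rangle_{\pT}$,
and the face sum is only bounded by $s_h(e_h,e_h)^{1/2}\|\phi\|_1\le \|e_h\|_{2,h}\|\phi\|_1$. Hence the honest estimate is $\|\theta\|_{-1}\le C\bigl(G+h^{k-1}\|u\|_{k+1}\bigr)$, not the $C\bigl(G+h^{k}\|u\|_{k+1}\bigr)$ you assert; there is no mechanism producing the extra power of $h$ on the jump contribution. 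Feeding the weaker bound into your duality estimate produces a term of size $h^{2k-1}\|u\|_{k+1}^2$ on the right of $G^2\le\cdots$, and after Young/absorption you only obtain $G\lesssim h^{k-1/2}\|u\|_{k+1}$, half an order short of (\ref{e0-H1}). In addition, your treatment of the face remainders in $(e_0,\theta)$ requires an $L^2$ bound on $e_0$ (through $e_b$) which you never establish; it can in fact be extracted from your own duality bound with generic $\theta\in L^2(\Omega)$, but as written the bookkeeping is incomplete.

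The paper sidesteps both difficulties with a different choice of dual datum: it takes $\theta=-\nabla\cdot\eta$ where $\eta\in[C^1(\Omega)]^d$ vanishes on the mesh skeleton $\E_h$. Then the elementwise integration by parts $( e_0,-\nabla\cdot\eta)=(\nabla e_0,\eta)$ produces \emph{no} face terms at all, and $\|\theta\|_{-1}=\|\nabla\cdot\eta\|_{-1}\le\|\eta\|_0$ comes for free, so the duality bound immediately gives $|(\nabla e_0,\eta)|\le C\bigl(\varepsilon(h)\|\nabla e_0\|_0+h\|e_h\|_{2,h}+h\sum_{i,j}\|(I-\Q_h)\partial^2_{ij}u\|_0\bigr)\|\eta\|_0$; density of such $\eta$ in $[L^2(\Omega)]^d$ and absorption of $\varepsilon(h)\|\nabla e_0\|_0$ for small $h$ yield the optimal rate. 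If you want to rescue your route, you would have to prove a genuinely sharper bound on the normal-derivative jumps of $e_0$ (a superconvergence-type statement not available here); otherwise adopt the skeleton-vanishing test field as the paper does.
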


\begin{proof} For any $\eta\in [C^1(\Omega)]^d$ with $\eta=0$ on
$\E_h$, let $w$ be the solution of the dual problem
(\ref{dual1})-(\ref{dual2}) with $\theta = -\nabla\cdot\eta$. Thus,
from Lemma (\ref{Lemma:TechnicalEquality}) with $v=e_h$ given as in
(\ref{error}) we obtain
\begin{equation*}
\begin{split}
-(e_0, \nabla\cdot\eta) =&\sum_{T\in{\cal T}_h} \sum_{i,j=1}^d
(a_{ij}\partial^2_{ij,d} e_h,
w)_T-\langle(e_{gi} -\partial_i e_0) n_j, ({\cal Q}_h-I)(a_{ij}w)\rangle_{\partial T}\\
&+ \langle (e_b-e_0) n_i,\partial_j( {\cal
Q}_h-I)(a_{ij}w)\rangle_{\partial T}\\
= & I_1 - I_2 + I_3,
\end{split}
\end{equation*}
where $I_j$ are defined in the obvious way. Since $\eta$ vanishes on
the wired basket $\E_h$, then from the integration by parts we have
\begin{equation}\label{2.14:800:10}
(\nabla e_0, \eta) = I_1 - I_2 + I_3.
\end{equation}
Using the two estimates in Lemma \ref{Lemma:TechnicalEstimates:01},
we can bound the terms $I_2$ and $I_3$ as follows
\begin{equation}\label{2.14:800:15}
|I_2| + |I_3| \leq C h \|\theta\|_{-1} \|e_h\|_{2,h} \leq C h
\|\eta\|_0\|e_h\|_{2,h}.
\end{equation}

As to the term $I_1$, we use the error equation (\ref{sehv2}) to
obtain
\begin{equation}\label{2.14.120}
\begin{split}
I_1 = & \sum_{T\in{\cal T}_h} \sum_{i,j=1}^d
(a_{ij}\partial^2_{ij,d} e_h, w)_T\\
=& \sum_{T\in{\cal T}_h} \sum_{i,j=1}^d (a_{ij}\partial^2_{ij,d}
e_h, {\cal Q}_h w)_T + (a_{ij}\partial^2_{ij,d} e_h, (I-{\cal Q}_h)
w)_T\\
=& \sum_{T\in{\cal T}_h} \sum_{i,j=1}^d ((I-{\cal
Q}_h)\partial_{ij}^2u, a_{ij} {\cal Q}_h w)_T+ \sum_{T\in{\cal T}_h}
\sum_{i,j=1}^d(a_{ij}\partial^2_{ij,d} e_h, (I-{\cal Q}_h) w)_T.
\end{split}
\end{equation}
Note that
\begin{equation}\label{EQ:New:2015:800}
\begin{split}|((I-{\cal Q}_h)\partial_{ij}^2u, a_{ij} {\cal Q}_h w)_T| =&
|((I-{\cal Q}_h)\partial_{ij}^2u, (I-{\cal Q}_h) a_{ij} {\cal
Q}_hw)_T|\\
\leq & \|(I-{\cal Q}_h)\partial_{ij}^2u\|_T \|(I-{\cal Q}_h) a_{ij}
{\cal Q}_hw\|_T\\
\leq & C h_T \|(I-{\cal Q}_h)\partial_{ij}^2u\|_T \|w\|_{1,T}
\end{split}
\end{equation}
and by (\ref{qaij})
\begin{equation}\label{EQ:New:2015:810}
\begin{split}
|(a_{ij}\partial^2_{ij,d} e_h, &(I-{\cal Q}_h) w)_T|
=|((a_{ij}-\bar{a}_{ij})\partial^2_{ij,d} e_h, (I-{\cal Q}_h) w)_T|\\
\leq & \ \|a_{ij}-\bar{a}_{ij}\|_{L^\infty(T)} \|\partial^2_{ij,d}
e_h\|_T \|(I-{\cal Q}_h) w\|_T\\
\leq & \ \varepsilon(h_T) h_T \|w\|_{1,T}\left( \|\partial_{ij}^2
e_0\|_T^2+s_T(e_h,e_h)\right)^{\frac12},
\end{split}
\end{equation}
where $\varepsilon(h_T) \to 0$ as $h\to 0$. Using
(\ref{EQ:New:2015:800}) and (\ref{EQ:New:2015:810}), we obtain the
following estimate for the term $I_1$:
\begin{equation}\label{EQ:New:2015:820}
\begin{split}
|I_1| \leq & C h \left(\varepsilon(h) \|\nabla^2 e_0\|_0 +
\|e_h\|_{2,h}
+ \sum_{i,j=1}^d \|(I-{\cal Q}_h)\partial^2_{ij} u\|_0 \right)\|w\|_1\\
\leq & C \left(\varepsilon(h) \|\nabla e_0\|_0 + h \|e_h\|_{2,h} + h
\sum_{i,j=1}^d \|(I-{\cal Q}_h)\partial_{ij}^2 u\|_0
\right)\|\eta\|_0,
\end{split}
\end{equation}
where we have used the inverse inequality and the estimate
$\|w\|_1\leq C \|\theta\|_{-1}\leq C \|\eta\|_0$. Substituting
(\ref{EQ:New:2015:820}) and (\ref{2.14:800:15}) into
(\ref{2.14:800:10}) yields
$$
|(\nabla e_0, \eta)| \leq C \left(\varepsilon(h) \|\nabla e_0\|_0 +
h \|e_h\|_{2,h} + h \sum_{i,j=1}^d \|(I-{\cal Q}_h)\partial_{ij}^2
u\|_0 \right)\|\eta\|_0.
$$
Since the set of all such $\eta$ is dense in $L^2(\Omega)$, then the
above inequality implies
$$
\|\nabla e_0\|_0\leq C \left(\varepsilon(h) \|\nabla e_0\|_0 + h
\|e_h\|_{2,h} + h \sum_{i,j=1}^d \|(I-{\cal Q}_h)\partial_{ij}^2
u\|_0 \right),
$$
which leads to
\begin{equation}\label{EQ:New:2015:820:100}
\|\nabla e_0\|_0\leq C h \left(\|e_h\|_{2,h} + \sum_{i,j=1}^d
\|(I-{\cal Q}_h)\partial_{ij}^2 u\|_0 \right)
\end{equation}
for sufficiently small meshsize $h$. The inequality
(\ref{EQ:New:2015:820:100}), together with the error estimate
(\ref{erres}) and the usual triangle inequality, verifies the
estimate (\ref{e0-H1}).
\end{proof}

The following is an error estimate for the primal variable $u_h$ in
the usual $L^2$ norm.

\begin{theorem}\label{Thm:L2errorestimate} Assume that each entry of the
coefficient matrix $\{a_{ij}\}_{d\times d}$ is in $C^1(\Omega)\cap
\left[ \Pi_{T\in\T_h} W^{2,\infty}(T)\right]$. In addition, assume
that the dual problem (\ref{dual1})-(\ref{dual2}) has
$H^2$-regularity with the a priori estimate (\ref{regul}) (i.e.,
$s=1$), and $P_1(T)\subset S_k(T)$ for all $T\in\T_h$. Then, there
exists a constant $C$ such that
\begin{equation}\label{e0}
\|u_0 - u\|_0 \leq Ch^{k+1} \|u\|_{k+1},
\end{equation}
provided that the meshsize $h$ is sufficiently small.
\end{theorem}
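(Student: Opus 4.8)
The plan is to use a standard duality (Aubin–Nitsche) argument. I would take the dual problem (\ref{dual1})-(\ref{dual2}) with the source term $\theta = e_0 - u + u = $ chosen so that testing against $v_0 = e_0$ produces the $L^2$-quantity I wish to control. Concretely, set $\theta = Q_0 u - u_0$ in the dual problem; since $P_1(T)\subset S_k(T)$ and the coefficients lie in $\Pi_{T\in\T_h} W^{2,\infty}(T)$, the $H^2$-regularity estimate (\ref{regul}) with $s=1$ gives $\|w\|_2 \le C\|\theta\|_0$. The identity from Lemma \ref{Lemma:TechnicalEquality} then expresses $(e_0,\theta)$ as $I_1 - I_2 + I_3$, exactly as in the proof of Theorem \ref{Thm:H1errorestimate}, and the two boundary terms $I_2, I_3$ are now controlled by the sharper Lemma \ref{Lemma:TechnicalEstimates:02} (which requires precisely the $s=1$ regularity and $P_1(T)\subset S_k(T)$ hypotheses assumed here), yielding $|I_2| + |I_3| \le Ch^2 \|\theta\|_0 \|e_h\|_{2,h}$.

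The main work, as before, is estimating $I_1 = \sum_{T}\sum_{i,j} (a_{ij}\partial^2_{ij,d} e_h, w)_T$. I would split $w = \Q_h w + (I-\Q_h)w$ and invoke the error equation (\ref{sehv2}) to rewrite the $\Q_h w$ part as $\sum_T \sum_{i,j} ((I-\Q_h)\partial^2_{ij} u, a_{ij}\Q_h w)_T$. Following the estimates (\ref{EQ:New:2015:800}) and (\ref{EQ:New:2015:810}), but now extracting a full power of $h^2$ by using $\|w\|_2$ rather than $\|w\|_1$, each factor $\|(I-\Q_h)\cdot\|_T$ and each oscillation term $\|a_{ij}-\bar a_{ij}\|_{L^\infty(T)}$ contributes an extra factor of $h$ relative to the $H^1$ proof. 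The expected bound is
\begin{equation*}
|I_1| \le Ch^2\Big(\|e_h\|_{2,h} + \sum_{i,j=1}^d \|(I-\Q_h)\partial^2_{ij}u\|_0\Big)\|\theta\|_0,
\end{equation*}
where the regularity $\|w\|_2 \le C\|\theta\|_0$ is what upgrades the previous single power of $h$ to $h^2$.

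Combining these, $|(e_0,\theta)| \le Ch^2(\|e_h\|_{2,h} + \sum_{i,j}\|(I-\Q_h)\partial^2_{ij}u\|_0)\|\theta\|_0$, and taking $\theta = Q_0 u - u_0$ gives $\|Q_0 u - u_0\|_0 \le Ch^2(\|e_h\|_{2,h} + \sum_{i,j}\|(I-\Q_h)\partial^2_{ij}u\|_0)$. Now I insert the known rates: the discrete $H^2$ error estimate (\ref{erres}) gives $\|e_h\|_{2,h}\le Ch^{k-1}\|u\|_{k+1}$, and the projection estimate (\ref{3.3}) with $m=k$ gives $\sum_{i,j}\|(I-\Q_h)\partial^2_{ij}u\|_0 \le Ch^{k-1}\|u\|_{k+1}$. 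Hence $\|Q_0 u - u_0\|_0 \le Ch^{k+1}\|u\|_{k+1}$. Finally, the triangle inequality $\|u_0 - u\|_0 \le \|u_0 - Q_0 u\|_0 + \|Q_0 u - u\|_0$ together with the standard projection bound (\ref{3.2}) (which already gives $\|Q_0 u - u\|_0 \le Ch^{k+1}\|u\|_{k+1}$) yields the claimed estimate (\ref{e0}).

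I expect the delicate point to be the bookkeeping in $I_1$: one must be careful that the two distinct mechanisms producing smallness — the oscillation $\varepsilon(h_T)\to 0$ of the coefficients against $(I-\Q_h)w$, and the approximation power of $(I-\Q_h)$ against the smooth data — each genuinely supply a clean factor of $h$, so that the total is $h^2$ and no spurious $\|\nabla e_0\|_0$ term survives that would need to be absorbed. Because the $H^1$ estimate (\ref{e0-H1}) is already available, any residual gradient term can be controlled at the optimal rate, so unlike the proof of Theorem \ref{Thm:H1errorestimate} there should be no absorption step required here.
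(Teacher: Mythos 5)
Your proposal follows essentially the same route as the paper's own proof: the duality argument based on the identity of Lemma \ref{Lemma:TechnicalEquality}, the boundary terms handled by Lemma \ref{Lemma:TechnicalEstimates:02}, the splitting $w=\Q_h w+(I-\Q_h)w$ combined with the error equation (\ref{sehv2}) for the volume term, and the final assembly using (\ref{erres}), (\ref{e0-H1}), (\ref{3.3}) and (\ref{3.2}) with the triangle inequality. The only cosmetic difference is that the paper's bound for $I_1$ retains a residual term $Ch^2\|\nabla e_0\|_0$ (arising from (\ref{qaij}) plus an inverse inequality) which is then absorbed at the optimal rate via the $H^1$ estimate (\ref{e0-H1}), exactly as you anticipate in your closing remark, so your displayed bound for $I_1$ should formally carry that extra gradient term.
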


\begin{proof} The proof of this theorem is based on the result of
Theorem \ref{Thm:H1errorestimate}, and the two proofs have a lot in
common. Let $w$ be the solution of the dual problem
(\ref{dual1})-(\ref{dual2}) with $\theta\in L^2(\Omega)$. From Lemma
\ref{Lemma:TechnicalEquality} with $v=e_h$ given by (\ref{error}),
we have
\begin{equation}\label{2.14:800:10:L2}
\begin{split}
(e_0, \theta) =&\sum_{T\in{\cal T}_h} \sum_{i,j=1}^d
(a_{ij}\partial^2_{ij,d} e_h,
w)_T-\langle(e_{gi} -\partial_i e_0) n_j, ({\cal Q}_h-I)(a_{ij}w)\rangle_{\partial T}\\
&+ \langle (e_b-e_0) n_i,\partial_j( {\cal
Q}_h-I)(a_{ij}w)\rangle_{\partial T}\\
= & J_1 - J_2 + J_3,
\end{split}
\end{equation}
where $J_m$ are defined accordingly. Using the two estimates in
Lemma \ref{Lemma:TechnicalEstimates:02} we obtain the following
estimates
\begin{equation}\label{2.14:800:15:L2}
|J_2| + |J_3| \leq C h^2 \|\theta\|_{0} \|e_h\|_{2,h}.
\end{equation}

For the term $J_1$, we use the error equation (\ref{sehv2}) to
obtain
\begin{equation}\label{2.14.120:L2}
\begin{split}
J_1 = & \sum_{T\in{\cal T}_h} \sum_{i,j=1}^d
(a_{ij}\partial^2_{ij,d} e_h, w)_T\\
=& \sum_{T\in{\cal T}_h} \sum_{i,j=1}^d (a_{ij}\partial^2_{ij,d}
e_h, {\cal Q}_h w)_T + (a_{ij}\partial^2_{ij,d} e_h, (I-{\cal Q}_h)
w)_T\\
=& \sum_{T\in{\cal T}_h} \sum_{i,j=1}^d ((I-{\cal
Q}_h)\partial_{ij}^2u, a_{ij} {\cal Q}_h w)_T+ \sum_{T\in{\cal T}_h}
\sum_{i,j=1}^d(a_{ij}\partial^2_{ij,d} e_h, (I-{\cal Q}_h) w)_T.
\end{split}
\end{equation}
Since $P_1(T)\subseteq S_k(T)$ and ${\cal Q}_h$ is the $L^2$
projection onto $S_k(T)$, then
\begin{equation}\label{EQ:New:2015:800:L2}
\begin{split}|((I-{\cal Q}_h)\partial_{ij}^2u, a_{ij} {\cal Q}_h w)_T| =&
|((I-{\cal Q}_h)\partial_{ij}^2u, (I-{\cal Q}_h) a_{ij} {\cal
Q}_hw)_T|\\
\leq & \|(I-{\cal Q}_h)\partial_{ij}^2u\|_T \|(I-{\cal Q}_h) a_{ij}
{\cal Q}_hw\|_T\\
\leq & C h_T^2 \|(I-{\cal Q}_h)\partial_{ij}^2u\|_T \|w\|_{2,T}
\end{split}
\end{equation}
and by (\ref{qaij}) we arrive at
\begin{equation}\label{EQ:New:2015:810:L2}
\begin{split}
&\ |(a_{ij}\partial^2_{ij,d} e_h, (I-{\cal Q}_h) w)_T|\\
= &\ |((a_{ij}-\bar{a}_{ij})\partial^2_{ij,d} e_h, (I-{\cal Q}_h) w)_T|\\
\leq & \ \|a_{ij}-\bar{a}_{ij}\|_{L^\infty(T)} \|\partial^2_{ij,d}
e_h\|_T \|(I-{\cal Q}_h) w\|_T\\
\leq & \ C h_T^3 \|w\|_{2,T}\left( \|\partial_{ij}^2
e_0\|_T^2+s_T(e_h,e_h)\right)^{\frac12}.
\end{split}
\end{equation}
It follows from (\ref{EQ:New:2015:800:L2}) and
(\ref{EQ:New:2015:810:L2}) that
\begin{equation}\label{EQ:New:2015:820:L2}
\begin{split}
|J_1| \leq & C \left( h^3\|\nabla^2 e_0\|_0 + h^3 \|e_h\|_{2,h} +
h^2 \sum_{i,j=1}^d \|(I-{\cal Q}_h)\partial_{ij}^2 u\|_0
\right)\|w\|_2\\
\leq & C \left( h^2\|\nabla e_0\|_0 + h^3 \|e_h\|_{2,h} + h^2
\sum_{i,j=1}^d \|(I-{\cal Q}_h)\partial_{ij}^2 u\|_0
\right)\|\theta\|_0,
\end{split}
\end{equation}
where we have used the inverse inequality and the regularity
assumption (\ref{regul}) with $s=1$. Substituting
(\ref{EQ:New:2015:820:L2}) and (\ref{2.14:800:15:L2}) into
(\ref{2.14:800:10:L2}) yields
$$
|(e_0, \theta)| \leq C h^2 \left( \|\nabla e_0\|_0 + \|e_h\|_{2,h} +
\sum_{i,j=1}^d \|(I-{\cal Q}_h)\partial_{ij}^2 u\|_0
\right)\|\theta\|_0.
$$
Thus, we have
$$
\|e_0\|_0\leq C h^2 \left( \|\nabla e_0\|_0 + \|e_h\|_{2,h} +
\sum_{i,j=1}^d \|(I-{\cal Q}_h)\partial_{ij}^2 u\|_0 \right),
$$
which, together with the error estimates (\ref{erres}),
(\ref{e0-H1}), and the usual triangle inequality, gives rise to the
$L^2$-error estimate (\ref{e0}) when the meshsize $h$ is
sufficiently small. This completes the proof of the theorem.
\end{proof}

\begin{remark} The optimal order error estimate (\ref{e0}) is based on the assumption that
$P_1(T)\subseteq S_2(T)$. This assumption was used in the derivation
of the inequalities (\ref{2.14:800:15:L2}),
(\ref{EQ:New:2015:800:L2}), and (\ref{EQ:New:2015:810:L2}). In the
case of $P_1(T)\nsubseteq S_2(T)$, those inequalities need to be
modified by replacing $\|w\|_{2,T}$ by $h_T^{-1}\|w\|_{1,T}$. As a
result, the following sub-optimal order error estimate holds true
\begin{equation}\label{e0-L2-suboptimal}
\|u_0 - u\|_0 \leq Ch^{k} \|u\|_{k+1}
\end{equation}
provided that (1) the coefficient matrix $\{a_{ij}\}_{d\times d}$
satisfies $a_{ij}\in C^1(\Omega)$, (2) the meshsize $h$ is
sufficiently small, and (3) the dual problem
(\ref{dual1})-(\ref{dual2}) has the $H^1$-regularity with $s=0$ in
the a priori estimate (\ref{regul}).
\end{remark}

To establish some error estimates for the two boundary components
$u_b$ and $\bu_g$, we introduce the following norms
\begin{equation}\label{EQ:eb-eg-L2norm}
\|e_b\|_{L^2}:=\Big(\sum_{T\in {\cal T}_h} h_T\|e_b\|_{\partial
T}^2\Big)^{\frac{1}{2}},\quad \|\be_g\|_{L^2}:=\Big(\sum_{T\in {\cal
T}_h} h_T\|\be_g\|_{\partial T}^2\Big)^{\frac{1}{2}}.
\end{equation}

\begin{theorem}\label{Thm:L2errorestimate-ub}
Under the assumptions of Theorem \ref{Thm:L2errorestimate}, there
exists a constant $C$ such that
\begin{eqnarray}\label{eb}
\|u_b- Q_b u\|_{L^2} &\leq & Ch^{k+1} \|u\|_{k+1},\\
\|\bu_g - {\bf Q}_b \nabla u\|_{L^2} &\leq & Ch^{k}
\|u\|_{k+1}.\label{eg}
\end{eqnarray}
\end{theorem}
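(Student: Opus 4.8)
The plan is to reduce the two boundary error norms to quantities already controlled by the earlier theorems, namely the discrete $H^2$ bound \eqref{erres}, the $H^1$ estimate \eqref{e0-H1}, and the $L^2$ estimate \eqref{e0}. Writing $e_h=\{e_0,e_b,\be_g\}=u_h-Q_hu$ with $e_b=u_b-Q_bu$ and $\be_g=\bu_g-\mathbf{Q}_g(\nabla u)$, I would first record the consequence of Theorem \ref{theoestimate} that $s_h(e_h,e_h)\le \|e_h\|_{2,h}^2\le Ch^{2(k-1)}\|u\|_{k+1}^2$. By the definition \eqref{EQ:local-s-form}--\eqref{EQ:global-s-form} of $s_h$ this simultaneously bounds $\sum_{T}h_T^{-3}\|e_0-e_b\|_{\pT}^2$ and $\sum_{T}h_T^{-1}\|\nabla e_0-\be_g\|_{\pT}^2$ by $Ch^{2(k-1)}\|u\|_{k+1}^2$; these are the only stabilizer-type quantities required. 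I would also note, using the projection estimate \eqref{3.2}, that \eqref{e0} and \eqref{e0-H1} upgrade to $\|e_0\|_0\le Ch^{k+1}\|u\|_{k+1}$ and $\|\nabla e_0\|_0\le Ch^{k}\|u\|_{k+1}$, since those theorems bound $u_0-u$ and $\nabla u_0-\nabla u$ rather than $e_0$ directly.

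For \eqref{eb} I would split $e_b=(e_b-e_0)+e_0$ on each $\pT$ and estimate the two contributions to $\|e_b\|_{L^2}^2=\sum_T h_T\|e_b\|_{\pT}^2$ separately. The jump part carries a very favorable power of $h$: since the target weight is $h_T$ while the stabilizer weight is $h_T^{-3}$, one has $\sum_T h_T\|e_0-e_b\|_{\pT}^2=\sum_T h_T^4\,(h_T^{-3}\|e_0-e_b\|_{\pT}^2)\le h^4\cdot Ch^{2(k-1)}\|u\|_{k+1}^2=Ch^{2(k+1)}\|u\|_{k+1}^2$. For the interior-trace part $\sum_T h_T\|e_0\|_{\pT}^2$ I would apply the trace inequality \eqref{trace-inequality} to obtain the bound $C(\|e_0\|_0^2+h^2\|\nabla e_0\|_0^2)$, and then insert the upgraded $L^2$ and $H^1$ estimates; both terms are $O(h^{2(k+1)})$. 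A triangle inequality assembles these into \eqref{eb}.

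For \eqref{eg} the same splitting $\be_g=(\be_g-\nabla e_0)+\nabla e_0$ is used in $\|\be_g\|_{L^2}^2=\sum_T h_T\|\be_g\|_{\pT}^2$. The jump part gives $\sum_T h_T\|\nabla e_0-\be_g\|_{\pT}^2=\sum_T h_T^2\,(h_T^{-1}\|\nabla e_0-\be_g\|_{\pT}^2)\le h^2\cdot Ch^{2(k-1)}\|u\|_{k+1}^2=Ch^{2k}\|u\|_{k+1}^2$. For $\sum_T h_T\|\nabla e_0\|_{\pT}^2$ I would use the trace inequality \eqref{trace-inequality}, yielding $C\sum_T(\|\nabla e_0\|_T^2+h_T^2\|\nabla^2 e_0\|_T^2)$, and then control the second-derivative term by the inverse inequality $\|\nabla^2 e_0\|_T\le Ch_T^{-1}\|\nabla e_0\|_T$ — which is legitimate because $e_0\in P_k(T)$ is a polynomial — so that this part collapses to $C\|\nabla e_0\|_0^2\le Ch^{2k}\|u\|_{k+1}^2$ by the $H^1$ estimate. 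A final triangle inequality gives \eqref{eg}.

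The argument is essentially a careful bookkeeping of $h$-powers, so I do not expect a deep obstacle; the main point requiring attention is matching the convergence orders exactly. In particular one must verify that the $H^1$ estimate \eqref{e0-H1} delivers the full order $h^k$ that \eqref{eg} demands, that the inverse-inequality step trades $\|\nabla^2 e_0\|_T$ for $\|\nabla e_0\|_T$ without losing an order, and that the weight mismatch between the positive power $h_T$ in the target $L^2$-norms and the negative powers $h_T^{-3},h_T^{-1}$ in the stabilizer produces precisely the $h^4$ and $h^2$ gains used above.
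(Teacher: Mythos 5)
Your proposal is correct and follows essentially the same route as the paper: split $e_b$ and $\be_g$ by the triangle inequality into an interior-trace part plus a jump part, absorb the jump terms into the stabilizer bounded by $\|e_h\|_{2,h}^2$ from (\ref{erres}) using the $h_T^4$ and $h_T^2$ weight gains, and control the interior traces via trace inequalities together with (\ref{e0}) and (\ref{e0-H1}). The only cosmetic difference is that the paper bounds the interior traces directly with the polynomial trace inequality (\ref{x}), whereas you use the general trace inequality (\ref{trace-inequality}) followed by an inverse inequality (and the $H^1$ bound in the $e_b$ case), which is equivalent.
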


\begin{proof} On each element $T\in\T_h$, we have from the triangle
inequality that
$$
\|e_b\|_{\pT} \leq \|e_0\|_{\pT} + \|e_b-e_0\|_\pT.
$$
Thus, by the trace inequality (\ref{x}) we obtain
\begin{equation*}
\begin{split}
\sum_{T\in\T_h} h_T \|e_b\|_{\pT}^2 & \leq 2 \sum_{T\in\T_h}
h_T\|e_0\|_{\pT}^2 + C h^4 \sum_{T\in\T_h}
h_T^{-3}\|e_b-e_0\|_\pT^2\\
& \leq C (\|e_0\|^2_0 + h^4 \|e_h\|_{2,h}^2),
\end{split}
\end{equation*}
which, together with the error estimates (\ref{erres}) and
(\ref{e0}), gives rise to (\ref{eb}).

To derive (\ref{eg}), we apply the same approach to the error
component $\be_g = \bu_g -{\bf Q}_b \nabla u$ as follows
\begin{equation*}
\begin{split}
\sum_{T\in\T_h} h_T \|\be_g\|_{\pT}^2 & \leq 2 \sum_{T\in\T_h}
h_T\|\nabla e_0\|_{\pT}^2 + C h^2 \sum_{T\in\T_h}
h_T^{-1}\|\be_g-\nabla e_0\|_\pT^2\\
& \leq C (\sum_{T\in\T_h} \|\nabla e_0\|_T^2 + h^2 \|e_h\|_{2,h}^2).
\end{split}
\end{equation*}
It then follows from the error estimates (\ref{erres}) and
(\ref{e0-H1}) that (\ref{eg}) holds true.
\end{proof}

\section{Numerical Results}\label{Section:NE}
In this section, we present some numerical results for the
primal-dual WG finite element method proposed and analyzed in the
previous sections. The test problems are defined in 2D polygonal
domains in the following form: Find $u\in H^2(\Omega)$ such that
\begin{equation}\label{Test-Problem}
\begin{split}
\sum_{i,j=1}^2 a_{ij}\partial^2_{ij}u= &f,\quad \text{in}\
\Omega,\\
u = & g,\quad \text{on}\ \partial\Omega.
\end{split}
\end{equation}
For simplicity, in the numerical scheme (\ref{2})-(\ref{32}), we
shall make use of the lowest order WG element on triangular
partitions; i.e., $k=2$ in $W_{k}(T)$ on triangles $T\in\T_h$ given
by (\ref{EQ:local-weak-fem-space}). The goal is to illustrate the
efficiency and confirm the convergence theory established in the
previous sections through numerical experiments.

For the lowest order WG element with $k=2$, the corresponding finite
element spaces are given by
$$
W_{h,2}=\{v=\{v_0,v_b, \bv_g\}:\ v_0\in P_2(T), v_b\in P_2(e),
\bv_g\in [P_1(e)]^2, \forall T\in {\cal T}_h, e\in \E_h \},
$$
and
$$
S_{h,2}=\{\sigma: \ \sigma|_T \in S_2(T),\ \forall T\in {\cal T}_h
\}.
$$
A finite element function $v\in W_{h,2}$ is said to be of {\it
$C^0$-type} if $v_b= v_0|_\pT$ for each element $T$. For $C^0$-type
WG elements, the boundary component $v_b$ can be merged with $v_0$
in all the formulations since it coincides with the trace of $v_0$
on the element boundary. This clearly results in a linear system
that has less computational complexity than fully discontinuous type
WG elements. But the $C^0$ continuity limits the pool of
availability of polygonal elements due to the obvious constraints.

The local finite element space $S_2(T)$ is chosen such that $P_0(T)
\subseteq S_2(T) \subseteq P_1(T)$. Our numerical experiments are
conducted for the case of both $S_2(T)=P_1(T)$ and $S_2(T)=P_0(T)$
with $C^0$-type $W_{h,2}$. For convenience, the $C^0$-type WG
element with $S_2(T)=P_1(T)$ shall be called the
$P_2(T)/[P_1(\pT)]^2/P_1(T)$ element. Analogously, the $C^0$-type WG
element with $S_2(T)=P_0(T)$ is called the
$P_2(T)/[P_1(\pT)]^2/P_0(T)$ element.

It should be pointed out that all the theoretical results developed
in previous sections can be extended to $C^0$-type elements without
any difficulty. For $C^0$-type elements, the discrete weak second
order partial derivative $\partial^2_{ij, d} v$ should be computed
as a polynomial in $S_2(T)$ on each element $T$ by solving the
following equation
\begin{equation*}
\begin{split}
(\partial^2_{ij, d}v,\varphi)_T=-(\partial_i
v_0,\partial_{j}\varphi)_T+ \langle v_{gi} ,\varphi
n_j\rangle_{\partial T},\qquad \forall \varphi\in S_2(T).
\end{split}
\end{equation*}

Three domains are used in our numerical experiments: the unit square
$\Omega=(0,1)^2$, the reference domain $\Omega=(-1,1)^2$, and the
L-shaped domain with vertices $A_0=(0,0), \ A_1=(2,0), \ A_2=(1,1),
\ A_3=(1,2),$  and $A_4=(0,2)$. Given an initial coarse
triangulation of the domain, a sequence of triangular partitions are
obtained successively through a uniform refinement procedure that
divides each coarse level triangle into four congruent sub-triangles
by connecting the three mid-points on the edges of each triangle.

We use $u_h=\{u_0, \bu_g\}\in W_{h,2}$ and $\lambda_h\in S_{h,2}$ to
denote the primal-dual WG-FEM solution arising from
(\ref{2})-(\ref{32}). These numerical solutions are compared with
some interpolants of the exact solution in various norms.
Specifically, the numerical component $u_0$ is compared with the
standard Lagrange interpolation of the exact solution $u$ on each
triangular element by using three vertices and three mid-points on
the edge, which is denoted as $I_h u$. The vector component $\bu_g$
is compared with the linear interpolant of $\nabla u$, denoted as
${\bf I}_g (\nabla u)$, on each edge $e\in \E_h$. The Lagrange
multiplier $\lambda_h$ is compared with $\lambda=0$, as it is the
trivial solution of the dual problem. Denote their differences by
$$
e_h=\{e_0,\textbf{e}_g\}:=\{u_0- I_h u, \ \bu_g - {\bf I}_g (\nabla
u)\},\quad \gamma_h=\lambda_h-0.
$$
The following norms are used to measure the magnitude of the error:
\begin{eqnarray*}
\mbox{$L^2$- norm:}\quad & &  \|e_0\|_0=\Big(\sum_{T\in {\cal T}_h}
\int_T |e_0|^2 dT\Big)^{\frac{1}{2}},\\
\mbox{$H^1$-seminorm:}\quad  & &
\|\textbf{e}_g\|_{L^2}=\Big(\sum_{T\in {\cal T}_h} h_T
\int_{\partial T}
|\textbf{e}_g|^2 ds\Big)^{\frac{1}{2}},\\
\mbox{$L^2$-norm:}\quad & &  \|\gamma_h\|_0=\Big(\sum_{T\in {\cal
T}_h} \int_T |\gamma_h|^2 dT\Big)^{\frac{1}{2}}.
\end{eqnarray*}

\subsection{Numerical experiments with continuous coefficients}

Tables \ref{NE:TRI:Case2-1}--\ref{NE:TRI:Case2-2} illustrate the
performance of the primal-dual WG finite element method for the test
problem (\ref{Test-Problem}) with exact solution given by
$u=\sin(x_1)\sin(x_2)$ on the unit square domain and the L-shaped
domain. The right-hand side function and the Dirichlet boundary
condition are chosen to match the exact solution. The results
indicate that the convergence rates for the solution of the weak
Galerkin algorithm (\ref{2})-(\ref{32}) is of order $r=4.0$ and
$r=3.5$ in the discrete $L^2$-norm for $u_0$ on the unit square
domain and the L-shaped domain, respectively. For the discrete
$H^1$-seminorm (i.e., the $L^2$ norm for $\be_g$), the numerical
order of convergence is $r=2.0$ on both domains. For the Lagrange
multiplier $\lambda_h$, the numerical order of convergence is
$r=1.0$ in the $L^2$-norm on the square and the L-shaped domain. In
comparison, the theoretical order of convergence for $u_0$ in the
$L^2$-norm is $r=3.0$, and that for $\bu_g$ and $\lambda_h$ are
$r=2.0$ and $r=1.0$, respectively for the unit square domain. For
the L-shaped domain, the theoretical rate of convergence for $u_0$
in the $L^2$-norm should be between $r=2$ and $r=3$ due to the lack
of needed $H^2$-regularity for the dual problem
(\ref{dual1})-(\ref{dual2}). However, the theoretical rates of
convergence for $\bu_g$ and $\lambda_h$ remain to be of order
$r=2.0$ and $r=1.0$, respectively. It is clear that the numerical
results are in good consistency with the theory for $\bu_g$ and
$\lambda_h$, but greatly outperform the theory for $u_0$ in the
discrete $L^2$-norm. We believe that the primal-dual weak Galerkin
finite element method has a superconvergence for smooth solutions
with smooth data on uniform triangular partitions.

\begin{table}[h!]
\begin{center}
\caption{Convergence rates for the $C^0$-
$P_2(T)/[P_1(\pT)]^2/P_1(T)$ element applied to problem
(\ref{Test-Problem}) with exact solution $u=\sin(x_1)\sin(x_2)$ on
$\Omega=(0,1)^2$. The coefficient matrix is $a_{11}=3$,
$a_{12}=a_{21}=1$, and $a_{22}=2$.}\label{NE:TRI:Case2-1}
\begin{tabular}{|c|c|c|c|c|c|c|}
\hline
$1/h$  & $\|e_0\|_0 $ & order &  $\|\be_g\|_{L^2}$  & order  &   $\|\gamma_h\|_0$  & order  \\
\hline
1   &   0.00624 &    & 0.126    & & 0.0335   &\\
\hline
2 & 0.00147&    2.09    &0.0448&    1.50    &0.0650&    -0.96 \\
\hline
4 &  1.39e-004   &3.40   &0.0116&    1.95    &0.0284     &1.20\\
\hline
8 &  1.03e-005   &3.75   &0.00284 &  2.03    &   0.0132  &1.10 \\
\hline
16 & 6.95e-007   &3.89   &7.02e-004  &2.02&  0.00643 &1.04  \\
\hline
32 & 4.52e-008   &3.94   &1.75e-004  &2.01&  0.00317     &1.02\\
\hline
\end{tabular}
\end{center}
\end{table}

\begin{table}[h!]
\begin{center}
\caption{Convergence rates for the $C^0$-
$P_2(T)/[P_1(\pT)]^2/P_1(T)$ element applied to problem
(\ref{Test-Problem}) with exact solution $u=\sin(x_1)\sin(x_2)$ on
the L-shaped domain. The coefficient matrix is $a_{11}=3$,
$a_{12}=a_{21}=1$, and $a_{22}=2$.}\label{NE:TRI:Case2-2}
\begin{tabular}{|c|c|c|c|c|c|c|}
\hline
$1/h$        & $\|e_0\|_0 $ & order &  $\|\be_g\|_{L^2} $  & order  &   $\|\gamma_h\|_0$  & order  \\
\hline
1    & 0.0168   & & 0.481   & &0.448     & \\
\hline
2  &0.00248    &2.76   &0.125& 1.95 &0.195     &1.20\\
\hline
4 &  2.30e-004&  3.43    &0.0310     &2.01   &0.0875     &1.16 \\
\hline
8 &  1.93e-005&  3.57    &0.00767    &2.01 & 0.0413  &1.08 \\
\hline
16 & 1.61e-006&  3.59 &  0.00191 &   2.01    &0.0202 &   1.03 \\
\hline 3.2 &1.37e-007&3.56 &4.75e-004&2.00&0.00999 &1.01
\\
\hline
\end{tabular}
\end{center}
\end{table}

Table \ref{NE:TRI:12-1} contains some numerical results for the
problem (\ref{Test-Problem}) in $\Omega=(-1,1)^2$ with exact
solution $u=\sin(x_1)\sin(x_2)$ with varying coefficients. Observe
that the coefficient function
$a_{12}=0.5|x_1|^{\frac13}|x_2|^{\frac13}$ is continuous in the
domain, but its derivative has a singularity at the origin so that
the corresponding second order elliptic equation can not be written
in a divergence form. The performance of the primal-dual WG finite
element method is similar to the case of constant coefficient
matrix, except that the superconvergence seems to be weakened in the
convergence order.

\begin{table}[h!]
\begin{center}
\caption{Convergence rates for the $C^0$-
$P_2(T)/[P_1(\pT)]^2/P_1(T)$ element applied to problem
(\ref{Test-Problem}) with exact solution $u=\sin(x_1)\sin(x_2)$ on
the domain $(-1,1)^2$. The coefficient matrix is $a11=1+|x_1|$,
$a12=a21=0.5|x_1|^{\frac13}|x_2|^{\frac13}$,
$a22=1+|x_2|$.}\label{NE:TRI:12-1}
\begin{tabular}{|c|c|c|c|c|c|c|}
\hline
$2/h$        & $\|e_0\|_0 $ & order &  $\|\be_g\|_{L^2} $  & order  &   $\|\gamma_h\|_0$  & order  \\
\hline
1 & 0.1763728 &&    1.2455105  &&   0.0038959 &\\
\hline 2 & 0.0356693 & 2.31 & 0.4859078 & 1.36 &
0.0082045 & -1.07 \\
\hline 4 & 0.0036026 & 3.31 & 0.1304043 &
1.90 & 0.0032424 & 1.34 \\
\hline 8 & 2.78e-004 &  3.70 & 0.0318454 & 2.03 &
0.0015142 & 1.10 \\
\hline 16 & 2.02e-005 & 3.78 & 0.0078262 & 2.02 & 7.42e-004
& 1.03 \\
\hline 32 & 2.37e-006 & 3.09 & 0.00194 & 2.01 & 3.68e-004 & 1.01 \\
\hline
\end{tabular}
\end{center}
\end{table}

\medskip

In Table \ref{NE:TRI:12-2}, we present some numerical results for
the test problem (\ref{Test-Problem}) with exact solution
$u=\sin(x_1)\sin(x_2)$ in $\Omega=(-1,1)^2$ when the
$C^0$-$P_2(T)/[P_1(\pT)]^2/P_0(T)$ element is employed in the
primal-dual WG finite element scheme (\ref{2})-(\ref{32}). Note that
the Lagrange multiplier $\lambda$ is now approximated by piecewise
constant functions; i.e., $S_2(T)=P_0(T)$. The results indicate that
the numerical solution $\bu_g$ converges to the exact solution
$\nabla u$ at the rate of $r=2.0$ in the usual $L^2$ norm. The same
rate of convergence is also observed for $u_h-u$ in the $L^2$-norm.
The Lagrange multiplier has a convergence rate slightly higher than
$r=1.0$ to the exact solution of $\lambda=0$. The numerical
convergence for the primal variable $u$ is in great consistency with
the theory developed in this paper, while the convergence for the
dual variable $\lambda$ outperforms the theory of $r=1.0$.

\begin{table}[h!]
\begin{center}
\caption{Convergence rates for the $C^0$-
$P_2(T)/[P_1(\pT)]^2/P_0(T)$ element applied to problem
(\ref{Test-Problem}) with exact solution $u=\sin(x_1)\sin(x_2)$ on
the domain $(-1,1)^2$. The coefficient matrix is $a11=1+|x_1|$,
$a12=a21=0.5|x_1|^{\frac13}|x_2|^{\frac13}$,
$a22=1+|x_2|$.}\label{NE:TRI:12-2}
\begin{tabular}{|c|c|c|c|c|c|c|}
\hline
$2/h$        & $\|e_0\|_0 $ & order &  $\|\be_g\|_{L^2} $  & order  &   $\|\gamma_h\|_0$  & order  \\
\hline

1 & 2.80e-006  &&     1.7557720  &&   2.10e-006 & \\
\hline

2 & 0.1756863 & -15.94 & 0.6755226 & 1.38 & 0.0894908 & -15.38\\
\hline

4 & 0.0395431 & 2.15 & 0.1637125 & 2.04  &  0.0517686 & 0.79
\\ \hline

8 &  0.0089637 & 2.14 & 0.0386493 & 2.08 & 0.0190018 & 1.45 \\
\hline

16 & 0.0021665 & 2.05 & 0.0093809 & 2.04 & 0.0068545 & 1.47
\\
\hline 32 & 5.37e-004 & 2.01 & 0.00231 & 2.02 & 0.00288 & 1.25 \\
\hline
\end{tabular}
\end{center}
\end{table}

\subsection{Numerical experiments with discontinuous coefficients}
In the second part of the numerical experiment, we consider problems
with discontinuous coefficients that satisfy the Cord\`es condition
(\ref{cordes}). The first such problem is given as follows
\begin{equation}\label{EQ:NE:500}
\begin{split}
\sum_{i,j=1}^2 (1+\delta_{ij}) \frac{x_i}{|x_i|}\frac{x_j}{|x_j|}
\partial^2_{ij} u & = f\qquad \mbox{in } \Omega,\\
u & = 0\qquad \mbox{on } \partial\Omega,
\end{split}
\end{equation}
where $\Omega=(-1,1)^2$ is the reference square domain and the
function $f$ is chosen so that the exact solution of
(\ref{EQ:NE:500}) is
\begin{equation}\label{EQ:NE:501}
u= x_1 x_2 \left(1-e^{1-|x_1|}\right)\left(1-e^{1-|x_2|}\right).
\end{equation}
It is not hard to see that the Cord\`es condition (\ref{cordes}) is
satisfied for the problem (\ref{EQ:NE:500}) with $\varepsilon = 3/5$
and the coefficients matrix is discontinuous across the $x_1$- and
$x_2$-axis. This is a test problem suggested in \cite{smears}.

Table \ref{NE:TRI:Case10-1} contains some numerical results for the
test problem (\ref{EQ:NE:500}) when the
$C^0$-$P_2(T)/[P_1(\pT)]^2/P_1(T)$ element is employed in the WG
finite element scheme (\ref{2})-(\ref{32}). Note that the Lagrange
multiplier $\lambda$ is approximated by piecewise linear functions;
i.e., $S_2(T)=P_1(T)$. The results indicate that the numerical
solution $\bu_g$ converges to the exact solution $\nabla u$ at the
rate of $r=2.0$ in the usual $L^2$ norm, which is consistent with
the theoretical rate of convergence. The Lagrange multiplier has a
convergence rate that seems to be higher than the theory-predicted
rate of $r=1.0$. For the approximation of $u$, the convergence rate
in the usual $L^2$ norm seems to exceed $r=2$. It should be pointed
out that there is no theoretical result on optimal order of error
estimates for $u-u_h$ in the $L^2$ norm, as it is not clear if the
dual problem (\ref{dual1})-(\ref{dual2}) has the required regularity
necessary for carrying out the convergence analysis. Table
\ref{NE:TRI:Case10-1} shows that the numerical performance of the
primal-dual WG finite element method is typically better than what
theory predicts.

\begin{table}[h!]
\begin{center}
\caption{Convergence rates for the $C^0$-
$P_2(T)/[P_1(\pT)]^2/P_1(T)$ element applied to problem
(\ref{EQ:NE:500}) with exact solution given by
(\ref{EQ:NE:501}).}\label{NE:TRI:Case10-1}
\begin{tabular}{|c|c|c|c|c|c|c|}
\hline
$2/h$        & $\|e_0\|_0 $ & order &  $\|\be_g \|_{L^2} $  & order  &   $\|\gamma_h\|_0$  & order  \\
\hline 1& 0.094005  & & 0.765566 &  & 0.337760
 &\\
\hline
2 & 0.248887 & -1.40& 1.346963 & -0.82  &0.642055 &-0.93\\
\hline
4 & 0.106414 & 1.23 & 0.538155 & 1.32 & 1.284597 &-1.0 \\
\hline
8 &  0.030602  & 1.80 &  0.137486 &1.97 & 0.537170&1.26 \\
\hline
16 & 0.007488 &2.03 & 0.032750  &2.07 & 0.212136&1.34\\
\hline
32 & 0.001736 &2.11 & 0.007848  &2.06 & 0.092301&1.20\\
\hline
\end{tabular}
\end{center}
\end{table}

In Table \ref{NE:TRI:Case10-2}, we present some numerical results
for the test problem (\ref{EQ:NE:500}) when the
$C^0$-$P_2(T)/[P_1(\pT)]^2/P_0(T)$ element is employed in the WG
finite element scheme (\ref{2})-(\ref{32}). It is interesting to
note that the absolute error for each numerical approximation is
smaller than those arising from the use of
$C^0$-$P_2(T)/[P_1(\pT)]^2/P_1(T)$ element in Table
\ref{NE:TRI:Case10-1}, while the rate of convergence remains to be
comparable. Readers are invited to draw their own conclusions for
the results illustrated in this table.

\begin{table}[h!]
\begin{center}
\caption{Convergence rates for the $C^0$-
$P_2(T)/[P_1(\pT)]^2/P_0(T)$ element applied to problem
(\ref{EQ:NE:500}) with exact solution given by
(\ref{EQ:NE:501}).}\label{NE:TRI:Case10-2}
\begin{tabular}{|c|c|c|c|c|c|c|}
\hline
$2/h$        & $\|e_0\|_0 $ & order &  $\|\be_g \|_{L^2} $  & order  &   $\|\gamma_h\|_0$  & order  \\
\hline
1&0.0393&&0.672&&0.137&\\
\hline
2&0.0322&0.28&0.322&1.06&0.104&0.40\\
\hline
4&0.00750&2.10&0.0791&2.03&0.0532&0.96\\
\hline
8&0.00161&2.22&0.0180&2.13&0.0204&1.39\\
\hline
16&3.85e-004&2.07&0.00427&2.08&0.00818&1.32\\
\hline
32&9.52e-005&2.02&0.00104&2.04&0.00371&1.14\\
\hline
\end{tabular}
\end{center}
\end{table}

The final test equation is given by
\begin{equation}\label{EQ:NE:800}
\sum_{i,j=1}^2\left(\delta_{ij}+\frac{x_ix_j}{|x|^2}\right)
\partial_{ij}^2 u=f\qquad \mbox{in } \Omega,
\end{equation}
where $|x| = \sqrt{x_1^2+x_2^2}$ is the length of $x$. Note that the
coefficient $a_{ij}=\frac{x_ix_j}{|x|^2}$ fails to be continuous at
the origin for $i\neq j$. For $\alpha>1$, it can be seen that
$u=|x|^\alpha\in H^2(\Omega)$ satisfies (\ref{EQ:NE:800}) with
$f=(2\alpha^2-\alpha)|x|^{\alpha-2}$.  The linear operator in
(\ref{EQ:NE:800}) satisfies the Cord\`es condition with
$\varepsilon=4/5$. The solution $u=|x|^\alpha$ has the regularity of
$H^{1+\alpha-\tau}(\Omega)$ for arbitrarily small $\tau>0$. In the
numerical experiments, we take $\alpha=1.6$ with problem
(\ref{EQ:NE:800}) defined on two square domains: $(0,1)^2$ and
$(-1,1)^2$. The case of $\Omega=(0,1)^2$ was tested in
\cite{smears}.

Tables \ref{NE:TRI:test2-2} and \ref{NE:TRI:test2-6} illustrate the
performance of the primal-dual WG scheme for the domain
$\Omega=(0,1)^2$. Note that the coefficient matrix
$\{a_{ij}\}_{2\times 2}$ is continuous in the interior of the
domain, but it fails to be continuous at the corner point $A=(0,0)$.
The numerical approximation suggests a convergence rate of $r=1.6$
in the $H^1$-seminorm (i.e., $L^2$ for $\be_g$) and $r=0.6$ in $L^2$
for the Lagrange multiplier $\lambda_h$. These are in great
consistency with theory developed in earlier sections, as the
solution $u=|x|^{1.6}$ has the regularity of $H^{2.6 -
\tau}(\Omega)$ for any small $\tau
>0$. It seems that the $L^2$ norm for $u-u_h$ has a numerical
convergence rate of $r=2$, for which no theory was available to
apply or compare with.


\begin{table}[h!]
\begin{center}
\caption{Convergence rates for the $C^0$-
$P_2(T)/[P_1(\pT)]^2/P_1(T)$ element applied to problem
(\ref{EQ:NE:800}) on $\Omega=(0,1)^2$ with exact solution
$u=|x|^{1.6}$.}\label{NE:TRI:test2-2}
\begin{tabular}{|c|c|c|c|c|c|c|}
\hline
$1/h$        & $\|e_0\|_0 $ & order &  $\|\be_g \|_{L^2} $  & order  &   $\|\gamma_h\|_0$  & order  \\
 \hline
1&0.020  &&0.315&&0.304 &\\
\hline
2& 0.00629 &1.68 &0.126&1.32 &0.248&0.296\\
\hline
4& 0.00174&1.86&0.0446&1.50&0.182 &0.445\\
\hline
8& 4.43e-004&1.97 &0.0152&1.56&0.126&0.537 \\
\hline
16& 1.08e-004&2.03  &0.00508&1.58&0.0846&0.570\\
\hline
32& 2.60e-005&2.05 &0.00169&1.59&0.0564&0.584 \\
\hline
\end{tabular}
\end{center}
\end{table}

\begin{table}[h!]
\begin{center}
\caption{Convergence rates for the $C^0$-
$P_2(T)/[P_1(\pT)]^2/P_0(T)$ element applied to problem
(\ref{EQ:NE:800}) on $\Omega=(0,1)^2$ with exact solution
$u=|x|^{1.6}$.}\label{NE:TRI:test2-6}
\begin{tabular}{|c|c|c|c|c|c|c|}
\hline
$1/h$        & $\|e_0\|_0 $ & order &  $\|\be_g \|_{L^2} $  & order  &   $\|\gamma_h\|_0$  & order  \\
\hline
1& 0.00405&&0.489&&0.0623&\\
\hline
2& 0.00803&-0.988&0.177&1.46&0.0616&0.0156\\
\hline
4& 0.00263&1.61&0.0616&1.53&0.0476 &0.372\\
\hline
8& 7.90e-004&1.74&0.0210&1.55&0.0327&0.544\\
\hline
16& 2.20e-004&1.85&0.00705&1.57&0.0218&0.582\\
\hline
32& 5.85e-005&1.91&0.00235&1.59&0.0145&0.593\\
\hline
\end{tabular}
\end{center}
\end{table}

Tables \ref{NE:TRI:test3-2} and \ref{NE:TRI:test3-6} illustrate the
performance of the primal-dual WG finite element scheme
(\ref{2})-(\ref{32}) for the equation (\ref{EQ:NE:800}) in the
domain $\Omega=(-1,1)^2$. For this test problem, the coefficient
matrix $\{a_{ij}\}_{2\times 2}$ is discontinuous at the center of
the domain so that the duality argument in the convergence theory is
not applicable. Consequently, the corresponding numerical results
are less accurate than the case of $\Omega=(0,1)^2$ as shown in
Tables \ref{NE:TRI:test2-2} and \ref{NE:TRI:test2-6}. However, the
numerical approximation suggests a convergence rate of $r=0.6$ in
$L^2$ for the Lagrange multiplier $\lambda_h$ which is consistent
with the theory. The convergence in $H^1$ and $L^2$ norms seems to
have a rate of $r=1.0$ or slightly higher.

\begin{table}[h!]
\begin{center}
\caption{Convergence rates for the $C^0$-
$P_2(T)/[P_1(\pT)]^2/P_1(T)$ element applied to problem
(\ref{EQ:NE:800}) on $\Omega=(-1,1)^2$ with exact solution
$u=|x|^{1.6}$.}\label{NE:TRI:test3-2}
\begin{tabular}{|c|c|c|c|c|c|c|}
\hline
$2/h$        & $\|e_0\|_0 $ & order &  $\|\be_g \|_{L^2} $  & order  &   $\|\gamma_h\|_0$  & order  \\
 \hline
1& 0.532&&0.511&&0.280&\\
 \hline
2&0.266 &1.00&0.403&0.344&0.623&-1.15\\
 \hline
4&0.117 &1.19&0.211&0.933 &0.562&0.149\\
 \hline
8&0.0563 &1.05 &0.111&0.927 &0.405 &0.471\\
 \hline
16&0.0271&1.06&0.0576&0.945 &0.277 &0.547\\
 \hline
32&0.0129&1.07 &0.0290 &0.987 &0.187&0.572\\
\hline
\end{tabular}
\end{center}
\end{table}

\begin{table}[H]
\begin{center}
\caption{Convergence rates for the $C^0$-
$P_2(T)/[P_1(\pT)]^2/P_0(T)$ element applied to problem
(\ref{EQ:NE:800}) on $\Omega=(-1,1)^2$ with exact solution
$u=|x|^{1.6}$.}\label{NE:TRI:test3-6}
\begin{tabular}{|c|c|c|c|c|c|c|}
\hline
$2/h$        & $\|e_0\|_0 $ & order &  $\|\be_g \|_{L^2} $  & order  &   $\|\gamma_h\|_0$  & order  \\
\hline
1&  0.647  &&0.487 &&0.0862&\\
\hline
2& 0.611&0.08 &0.697&-0.517&0.0769&0.165\\
\hline
4& 0.254 &1.26 &0.407 &0.774 &0.0500 &0.619 \\
\hline
8& 0.113&1.18&0.218&0.903&0.0417&0.264 \\
\hline
16& 0.0512&1.14&0.110 &0.984 &0.0297&0.490 \\
\hline
32 & 0.0235 &1.12&0.0540& 1.03& 0.0201&0.561\\
\hline
\end{tabular}
\end{center}
\end{table}



\begin{thebibliography}{99}

\bibitem{babuska}
{\sc I. Babu\u{s}ka}, {\em The finite element method with Lagrange
multipliers}, Numer. Math., 20 (1973), pp.~179-192.

\bibitem{brenner-0}
{\sc S. C. Brenner, T. Gudi, M. Neilan, and L.-Y. Sung}, {\em $C^0$
penalty methods for the fully nonlinear Monge-Amp\`{e}re equation},
Math. Comp., 80:1979-1995, 2011.

\bibitem{brenner}
{\sc S. C. Brenner and L. R. Scott}, {\em The Mathematical Theory of
Finite Element Methods}, 3rd ed., Texts Appl. Math. 15, Springer,
New York, 2008.

\bibitem{b1974} {\sc F. Brezzi}, {\em
On the existence, uniqueness, and approximation of saddle point
problems arising from Lagrange multipliers}, RAIRO, 8 (1974), pp.
129-151.

\bibitem{chicco}
{\sc M. Chicco}, {\em Dirichlet problem for a class of linear second
order elliptic partial differential equations with discontinuous
coefficients}, Ann. Mat. Pura Appl. (4), 92 (1972), pp. 13–22.

\bibitem{ciarlet-fem}
{\sc P. G. Ciarlet}, \textit{The Finite Element Method for Elliptic
Problems}, North-Holland, 1978.

\bibitem{Thomee}
{\sc M. Crouzeix and V. Thom\'ee}, {\em The stability in $L_p$ and
$W_p^1$ of the $L_2$-projection onto finite element function
spaces}, Mathematics of Computation, Vol 48, No. 178 (1987),
pp.~521-532.

\bibitem{dedner}
{\sc Andreas Dedner and Tristan Pryer}, {\em Discontinuous Galerkin
methods for nonvariational problems}, arXiv:1304.2265v1.

\bibitem{feng}
{\sc X. Feng, L. Hennings, and M. Neilan}, {\em  $C0$ discontinuous
Galerkin fonite element methods for second order linear elliptic
partial differential equations in non-divergence form},
arXiv:1505.02842.

\bibitem{Fleming}
{\sc W. H. Fleming and H. M. Soner}, {\em Controlled Markov
Processes and Viscosity Solutions}, 2nd ed., Stoch. Model. Appl.
Probab. 25, Springer, New York, 2006.

\bibitem{Gilbarg-Trudinger}
{\sc David Gilbarg and Neil S. Trudinger}. {\em Elliptic Partial
Differential Equations of Second Order}. Springer-Verlag, Berlin,
second edition, 1983.

\bibitem{grisvard}
{\sc P. Grisvard}, {\em Elliptic Problems in Nonsmooth Domains},
Classics Appl. Math. 69, SIAM, Philadelphia, 2011.

\bibitem{lakkis}
{\sc O. Lakkis and T. Pryer}, {\sc A finite element method for
second order nonvariational elliptic problems}, SIAM J. Sci.
Comput., 33(2):786-801, 2011.

\bibitem{maugeri}
{\sc A. Maugeri, D. K. Palagachev, and L. G. Softova}, {\em Elliptic
and Parabolic Equations with Discontinuous Coefficients}, Math. Res.
109, Wiley-VCH Verlag Berlin GmbH, Berlin, 2000.

\bibitem{mwy3655}
{\sc L. Mu, J. Wang, and X. Ye}, {\em Weak Galerkin finite element
methods on polytopal meshes}, International Journal of Numerical
Analysis and Modeling, vol. 12, pp. 31-53, 2015. arXiv:1204.3655v2.

\bibitem{neilan}
{\sc M. Neilan}, {\em Quadratic finite element methods for the
Monge-Amp\`ere equation}, J. Sci. Comput., 54(1):200-226, 2013.

\bibitem{smears}
{\sc I. Smears and E. S\"uli}, {\em Discontinuous Galerking finite
element approximation of nondivergence form elliptic equations with
Cord\`es coefficients}, SIAM J Numer. Anal., Vol. 51, No. 4, 2013,
pp. 2088-2106.

\bibitem{smears-2}
{\sc I. Smears and E. S\"uli}, {\em Discontinuous Galerkin finite
element approximation of Hamilton–Jacobi–Bellman equations with
Cord\`es coefficients}, http://eprints.maths.ox.ac.uk/1671/.

\bibitem{ww}
{\sc C. Wang and J. Wang}, {\em An efficient numerical scheme
for the biharmonic equation by weak Galerkin finite element methods
on polygonal or polyhedral meshes}, available at arXiv:1303.0927v1.
Computers and Mathematics with Applications, 68 (2014), 2314-2330.
DOI:10.1016 /j.camwa.2014.03.021.

\bibitem{wy2013}
{\sc J. Wang and X. Ye}, {\em A weak Galerkin finite element method for
second-order elliptic problems}, available at arXiv:1104.2897v1. J.
Comp. and Appl. Math., vol. 241 (2013), pp. 103-115, 2013.

\bibitem{wy3655}
{\sc J. Wang and X. Ye}, {\em A weak Galerkin mixed finite element method
for second-order elliptic problems}, available at arXiv:1202.3655v1.
Math. Comp., 83 (2014), 2101-2126.

\bibitem{wy2707} {\sc J. Wang and X. Ye},
 {\em A weak Galerkin finite element method for
 the Stokes equations}, available at arXiv:1302.2707v1. Advances in Computational Mathematics,
 May, 2015. DOI 10.1007/s10444-015-9415-2.

\end{thebibliography}
\end{document}